\newtheorem{remark}{Remark}[section]
\newtheorem{example}{Example}[section]
\newcommand{\Om}{\Omega}
\def\p{\partial} 
\newcommand{\op}{\operatorname}
\newcommand{\dx}{\mathrm{d}x}
\newcommand{\ds}{\mathrm{d}s}
\def\T{\mathcal{T}}
\def\bold{\boldsymbol}
\numberwithin{equation}{section}
\title{Adaptive Crouzeix-Raviart finite elements for the first eigenpair of $p$-Laplacian}
\author{Guanglian Li\thanks{Department of Mathematics, The University of Hong Kong, Hong Kong Special Administrative Region, China. The research of  this author was partially supported by Hong Kong RGC through
General Research Fund (project 17317122) and Early Career Scheme (project 27301921). ({\tt lotusli@maths.hku.hk})} \and Yueqi Wang\thanks{Department of Mathematics, The University of Hong Kong, Hong Kong Special Administrative Region, China. ({\tt u3007895@connect.hku.hk})} \and  Yifeng Xu\thanks{Corresponding author. Department of Mathematics and Scientific Computing Key Laboratory of Shanghai Universities, Shanghai Normal University, Shanghai 200234, China. The research of this
author was partially supported by the National Natural Science Foundation of China under grants 12250013, 12261160361 and 12271367. ({\tt yfxu@shnu.edu.cn, yfxuma@aliyun.com})}}
\date{\today}
\begin{document}

\maketitle

\begin{abstract}
In this paper, we propose and analyze an adaptive Crouzeix-Raviart finite element method for computing the first Dirichlet eigenpair of the $p$-Laplacian problem. We prove that the sequence of error estimators produced by the adaptive algorithm has a vanishing limit and that, starting from a fine initial mesh, the relevant sequence of approximate eigenvalues converges to the first eigenvalue and the distance in a mesh-dependent broken norm between discrete eigenfunctions and the set composed of relevant continuous eigenfunctions also tends to zero. The analysis hinges on establishing a compactness property for Crouzeix-Raviart finite elements over a sequence of adaptively generated meshes, which represents key theoretical challenges and novelties. We present numerical results to illustrate the advantage of the proposed algorithm. 
\end{abstract}

\begin{keywords}
$p$-Laplacian, first eigenvalue, Crouzeix-Raviart finite element, adaptive finite element method, convergence
\end{keywords}

\begin{AMS}
{65N12, 65N25, 65N30, 65N50, 35P30}
\end{AMS}

\section{Introduction}
Let $\Omega$ be a bounded polyhedral connected domain in $\mathbb{R}^d$ ($d=2,3$), and fix $p\in (1,\infty)$.
Consider the following Dirichlet eigenvalue problem of the $p$-Laplacian operator:
\begin{equation}\label{diff_eq}
    \left\{\begin{aligned}
        -\mathrm{div}(|\boldsymbol{\nabla}u|^{p-2}\boldsymbol{\nabla}u)&=\lambda|u|^{p-2}u&&\text{in}~\Om,\\
        u&=0&&\text{on}~\p\Om.
    \end{aligned}
    \right.
\end{equation}
In the engineering literature, the $p$-Laplacian operator is often used to describe non-Newtonian fluids, turbulent flow, flow through porous media and power-law materials etc. 

The weak formulation of \eqref{diff_eq} is to find $(\lambda,u)\in
\mathbb{R}\times V:=W_0^{1,p}(\Om)$ such that
\begin{equation}\label{vp_eigen}
    \int_\Om|\boldsymbol{\nabla}u|^{p-2}\boldsymbol{\nabla}u\cdot \boldsymbol{\nabla}v\dx=\lambda\int_\Om|u|^{p-2}uv\dx,\quad\forall v\in V.
\end{equation}
The existing theory \cite{MR912211,MR2196811,MR1007505} states that there is a nondecreasing sequence of positive eigenvalues
$\{\lambda_n\}_{n\geq1}$ tending to $\infty$. Moreover, the first eigenvalue $\lambda_1$
is simple and isolated \cite{MR1007505}, and can be characterized as the minimum of the Rayleigh quotient:
\begin{equation}\label{min_cont}
    \lambda_1=\inf_{v\in V\setminus \{0\}}\mathcal{J}(v):=\int_\Om|\boldsymbol{\nabla}v|^p\dx/\int_\Om|v|^p\dx.
\end{equation}
By the direct method in calculus of variation \cite{MR2722059,MR1007505}, there exists a minimizer to problem \eqref{min_cont} among all $L^p$-normalized functions. This implies an $L^p$-normalized
eigenfunction set with respect to $\lambda_1$:
\[E_{\lambda_1}:=\left\{u\in
V~\Big|~\int_\Om|\boldsymbol{\nabla}u|^{p-2}\boldsymbol{\nabla}u\cdot
\boldsymbol{\nabla}v\dx=\lambda_1\int_\Om|u|^{p-2}uv\dx,~\forall v\in
V,~\|u\|_{L^p(\Omega)}=1\right\}.\]

The numerical treatment of problem \eqref{diff_eq} is  challenging, due to the degeneracy of the differential operator and the possible presence of geometric singularities of the domain $\Omega$, e.g., reentrant corners. One effective strategy to resolve the challenges is to employ a suitable adaptive technique, e.g., adaptive finite element method (AFEM). The AFEM can achieve the desired accuracy with minimal degrees of freedom, making it a popular tool in scientific and engineering computing. The standard AFEM typically consists of the following four modules in every loop:
\begin{equation}\label{afemloop}
\textrm{SOLVE}\to\textrm{ESTIMATE}\to\textrm{MARK}\to\textrm{REFINE}.
\end{equation}
Since its first introduction in \cite{MR483395}, there has been significant progress in mathematical analysis of AFEM. The use of a posteriori estimation, a key component in the \textrm{ESTIMATE} module, is well-documented \cite{MR1885308,MR3059294}. Over the past three decades, there has been substantial progress on the convergence and complexity of AFEM; see the reviews \cite{MR4793681,MR3170325,MR2648380} and the references therein. 

In this work, we develop a novel adaptive nonconforming linear FEM, using the Crouzeix-Raviart (C-R) FEM \cite{MR343661} for computing the first eigenpair of \eqref{vp_eigen} (cf. Algorithm \ref{anfem}), and establish the convergence of the sequence of discrete eigenpairs $\{(\mu_k,u_k)\}_{k\geq 0}$ (cf. Theorems \ref{thm:conv2} and \ref{thm:conv_est}). We derive a computable quantity inspired by \cite{MR2783229} that measures the discontinuity of the discrete eigenfunction $u_k$, and prove the weak convergence of the adaptively generated sequence $\{u_k\}_{k \geq 0}$ up to a subsequence (see Lemma \ref{lem:weakconv}) if the computable quantity tends to zero as $k\to\infty$. Moreover, the $L^p$ weak convergence in Lemma \ref{lem:weakconv} can be lifted to a strong one (see Lemma \ref{lem:L^p-strongconv}). This is the discrete compactness of C-R elements over a sequence of adaptively generated meshes, thereby generalizing the result in \cite{MR566091} (see also \cite[Chapter 10.3]{ShiWang:2013}) in the case of uniform mesh refinements, and the compactness property of adaptive C-R elements in the $L^2$-version \cite{JinLiXuZhu:2025}.

Moreover, we prove that a residual functional associated with the discrete eigenpair $(\mu_k,u_k)$ vanishes in the limit as a computable quantity related to \eqref{vp_disc} tends to zero (see Lemma \ref{lem:residue}). This result relies on an interpolation operator from $W_{0}^{1,p}(\Omega)$ to the C-R FE space \cite{MR343661,MR2783229}, which also provides a guaranteed lower bound for $\lambda_1$ in Theorem \ref{thm:GLB}. Then we establish the existence of a strong limit pair $ (\mu , w) \in \mathbb{R} \times W_0^{1,p}(\Omega)$ of a subsequence $\{(\lambda_{k_j},u_{k_j})\}_{j\geq 0}$, provided that two relevant computable quantities converge to zero (see Theorem \ref{thm:strong-conv}). With an appropriate assumption on the initial mesh, this gives the desired convergence of Algorithm \ref{anfem} (see Theorem \ref{thm:conv2}). 
Hence, the convergence analysis motivates the two estimators in the \textrm{ESTIMATE} module of Algorithm \ref{anfem}. Furthermore, by leveraging the argument from adaptive conforming FEMs \cite{MR2832786}, the sufficient condition in Theorem \ref{thm:strong-conv} is proved to hold (see Lemma \ref{lem:max_est} and Theorem \ref{thm:conv_est}).

Now we discuss related works in existing studies. For linear or semi-linear (with nonlinearity in low-order terms) eigenvalue problems, there is a large body of literature on the linear Laplacian, diffusion, and bi-Laplacian  \cite{MR3647956,MR3532806,MR3315500,MR2810801,MR2970733,MR4727057,MR4700405,MR2823345,MR2787541,MR2430983,MR3259027,MR3347459,MR3407244,MR2832785,MR2531037,MR2485445}. Moreover, several works \cite{MR2911397,MR2240626,MR2383205,MR4067381,MR1950626,MR1935808} have investigated a posteriori error estimation for $p$-Laplacian boundary value problems. Despite these impressive advances, the AFEM approximation of the $p$-Laplacian eigenvalue problem \eqref{vp_eigen} remains very limited. Compared with boundary value problems, the eigenvalue problem itself is nonlinear even for the linear Laplacian, and hence \eqref{vp_eigen} associated with a nonlinear operator is naturally more computationally involved and theoretically challenging. The only existing work on problem \eqref{vp_eigen} is \cite{MR4860202}, which investigated the AFEM using conforming linear elements for the first eigenpair of \eqref{vp_eigen}. For a sufficiently fine initial mesh, Li et al. \cite{MR4860202} proved that the sequence of the first eigenvalues generated by the adaptive algorithm converges to $\lambda_1$ from above and that the $W^{1,p}$ distance between the sequence of associated discrete eigenfunctions and $E_{\lambda_1}$ tends to zero. The present work presents an extension of the work \cite{MR4860202} on the conforming method to a nonconforming one, aiming at approximating the first eigenpair from below. The extension is highly nontrivial due to the nonconformity of C-R FEs, and requires several new technical tools, especially discrete compactness on a sequence of discrete solutions from adaptive meshes. Note that the approximate first eigenpair from \cite{MR4860202} converges to the exact one from above due to its conformity, while the current approach with C-R FEs can provide a lower bound. %We establish an error estimator measuring the nonconformity of these discrete solutions, and providing a sufficient condition for the existence of discrete compactness. 

The rest of the paper is structured as follows. In Section \ref{sec:nonconform-fem}, we describe the C-R FEM for problem \eqref{min_cont}. In Section \ref{sec:sufficient-cond}, we establish a sufficient condition for the convergence of discrete solutions using the AFEM, and in Section \ref{sec:afem}, we propose an adaptive algorithm and give its convergence analysis. In Section \ref{sec:num}, we present numerical results to illustrate the efficiency of the algorithm. Finally, in Section \ref{sec:conclusion}, we conclude with some remarks. In the appendix, we state several technical results on an $L^p$-based Sobolev space related to the divergence operator. Throughout, we employ standard notation for $L^p$ or $L^\infty$ spaces, the Sobolev space $W_0^{1,p}(\Omega)$, and their associated (semi-) norms.
We denote the conjugate exponent of $p$ by $q$, that is, $\frac{1}{q}+\frac{1}{p}=1$. Additionally, we use $C$, with or without a subscript, to denote a generic positive constant that is independent of the mesh size but its value may vary at each occurrence.

\section{Crouzeix-Raviart FEM}\label{sec:nonconform-fem}
Now we describe the C-R FE space for approximating problem \eqref{min_cont}. Let $\mathcal{T}$ be a {regular} conforming
triangulation of $\overline{\Omega}$ into a set of closed simplices with a discretization parameter $h_T:=|T|^{1/d}$, equivalent to $\mathrm{diam}(T)$, for each $T\in\mathcal{T}$. The set $\mathcal{F}_\mathcal{T}$ (respectively
$\mathcal{F}_{\mathcal{T}}(\Omega)$) consists of all faces (respectively all interior faces) in
$\mathcal{T}$. The local mesh size of each $F\in\mathcal{F}_{\mathcal{T}}$ is given by $h_F:=|F|^{1/(d-1)}$. Let $\mathcal{N}_{\mathcal{T}}$ denote the set of vertices of $\mathcal{T}$, and let $\mathcal{N}_{\mathcal{T}}(\Omega)$ (resp. $\mathcal{N}_{\mathcal{T}}({\partial \Omega})$) denote the interior vertices (resp. the boundary vertices). For $T\in\mathcal{T}$, 
$\boldsymbol{n}_T$ denotes the unit normal of $\partial T$ pointing towards the outside of $T$. We assign a fixed unit normal vector $\boldsymbol{n}_F$ to each
$F\in\mathcal{F}_{\mathcal{T}}$ and when $F\subset\partial\Omega$, $\boldsymbol{n}_F$ coincides with the unit outward normal $\boldsymbol{n}$ to $\partial\Om$. For $F\in \mathcal{F}_{\mathcal{T}}(\Omega)$ shared by two elements $T_1,T_2\in \mathcal{T}$ and $w_j=w|_{T_j}$ with $j=1,2$, we define the jump of $w$ on $F$ by $[w]:=\sum_{i=1}^2w_i(\boldsymbol{n}_{T_i}|_F, \boldsymbol{n}_F)$ as the jump across an interior face $F$ with $(\cdot,\cdot)$ the inner product in $\mathbb{R}^d$. For a boundary face $F\subset\partial\Omega$, $[w]:=w$.
Then C-R FE space over $\mathcal{T}$ is given by \cite{MR343661}:
\begin{equation}\label{CRspace}
    V_{\mathcal{T}}^{\op{CR}}:=\Big\{v\in L^p(\Om):~v|_{T}\in P_1(T), \forall T\in\mathcal{T},~\int_{F}[v]\ds=0, \forall F\in\mathcal{F}_{\mathcal{T}}\Big\}.
\end{equation}
A function in
$V_{\mathcal{T}}^{\op{CR}}$ is completely determined by its nodal values at the centers of all
interelement faces and it takes zero value at the centers of boundary faces. Over
$V_{\mathcal{T}}^{\op{CR}}$, we define a mesh-dependent broken norm:
\begin{align*}
\|\cdot\|_{1,p,\mathcal{T}}:=\Big(\sum_{T\in\mathcal{T}}\int_T|\boldsymbol{\nabla}\cdot|^p\dx\Big)^{1/p}.
\end{align*}
It reduces to the usual norm $\|\bold{\nabla}\cdot\|_{L^p(\Om)}$ over the $W_0^{1,p}(\Omega)$, by the Poincar\'{e} inequality. Moreover, there holds the discrete Poincar\'{e} inequality \cite[Section 10.6]{MR2373954} \cite[Corollary 4.3]{MR2557047} over $V_{\mathcal{T}}^{\op{CR}}$:
\begin{equation}\label{dis_poin}
    \|v\|_{L^p(\Om)}\leq C_{\mathrm{dp}}\|v\|_{1,p,\mathcal{T}},\quad\forall v\in V_{\mathcal{T}}^{\op{CR}},
\end{equation}
with the positive constant $C_{\mathrm{dp}}$ only depending on the shape regularity of $\mathcal{T}$.

Next, we define the energy functional over $V_{\mathcal{T}}^{\op{CR}}$:
\begin{align*}
\mathcal{J}_{\T}(v):=\sum_{T\in\T}\int_T|\boldsymbol{\nabla}v|^p\dx\Big/\int_\Om|v|^p\dx.
\end{align*}
Then the nonconforming FE approximation of \eqref{min_cont} reads: find $u_\mathcal{T}\in V_\mathcal{T}^{\op{CR}}$ such that
\begin{equation}\label{min_disc}
    \mu_\T:=\mathcal{J}_{\T}(u_\mathcal{T})=\inf_{v\in V_\mathcal{T}^{\op{CR}}\setminus \{0\}}\mathcal{J}_{\mathcal{T}}(v).
\end{equation}

\begin{theorem}\label{thm_min_disc}
The infimum in \eqref{min_disc} is positive and attained by some nonnegative $u_\mathcal{T}\in V_\mathcal{T}^{\op{CR}}\setminus \{0\}$.
\end{theorem}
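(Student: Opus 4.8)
The plan is to solve the constrained minimization problem by a direct method over the finite-dimensional space $V_{\mathcal{T}}^{\op{CR}}$. First I would restrict attention to the admissible set $\mathcal{A}:=\{v\in V_{\mathcal{T}}^{\op{CR}}:\|v\|_{L^p(\Omega)}=1\}$, on which $\mathcal{J}_{\mathcal{T}}(v)=\|v\|_{1,p,\mathcal{T}}^p$, and observe that $\mu_{\mathcal{T}}=\inf_{v\in\mathcal{A}}\|v\|_{1,p,\mathcal{T}}^p$ since $\mathcal{J}_{\mathcal{T}}$ is invariant under nonzero scaling. By the discrete Poincaré inequality \eqref{dis_poin}, any $v\in\mathcal{A}$ satisfies $1=\|v\|_{L^p(\Omega)}\le C_{\mathrm{dp}}\|v\|_{1,p,\mathcal{T}}$, so $\mu_{\mathcal{T}}\ge C_{\mathrm{dp}}^{-p}>0$, which gives positivity of the infimum.

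Next I would establish attainment. Since $V_{\mathcal{T}}^{\op{CR}}$ is finite-dimensional and $\|\cdot\|_{1,p,\mathcal{T}}$ is a norm on it (again by \eqref{dis_poin}, $\|v\|_{1,p,\mathcal{T}}=0$ forces $v=0$), all norms on $V_{\mathcal{T}}^{\op{CR}}$ are equivalent; in particular $\|\cdot\|_{L^p(\Omega)}$ is a norm there, so $\mathcal{A}$ is the unit sphere of a finite-dimensional normed space and is compact. The map $v\mapsto\|v\|_{1,p,\mathcal{T}}^p$ is continuous on $V_{\mathcal{T}}^{\op{CR}}$, hence attains its minimum on the compact set $\mathcal{A}$ at some $u_{\mathcal{T}}$; this $u_{\mathcal{T}}$ then realizes the infimum in \eqref{min_disc}. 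Finally, to obtain a nonnegative minimizer, I would replace $u_{\mathcal{T}}$ by a suitable function with the same energy: the naive candidate $|u_{\mathcal{T}}|$ is not in $V_{\mathcal{T}}^{\op{CR}}$ in general, so instead I would take $\tilde u_{\mathcal{T}}\in V_{\mathcal{T}}^{\op{CR}}$ defined by its face-midpoint degrees of freedom $\tilde u_{\mathcal{T}}(m_F):=|u_{\mathcal{T}}(m_F)|$ for each face midpoint $m_F$ (zero on boundary faces). On each simplex $T$, the affine function $\tilde u_{\mathcal{T}}|_T$ is obtained from $u_{\mathcal{T}}|_T$ by composing with a change of sign of the barycentric-type midpoint data, and since the three (or four) midpoint values of an affine function on a simplex determine it and the transformation $a\mapsto|a|$ applied componentwise does not increase the relevant $\ell^p$-type seminorm of the coefficient vector, one gets $|\boldsymbol{\nabla}\tilde u_{\mathcal{T}}|_T|\le$ something controlled by $|\boldsymbol{\nabla}u_{\mathcal{T}}|_T|$ in an $L^p(T)$ sense; more robustly, one checks $\|\tilde u_{\mathcal{T}}\|_{L^p(\Omega)}\ge\|u_{\mathcal{T}}\|_{L^p(\Omega)}$ and $\|\tilde u_{\mathcal{T}}\|_{1,p,\mathcal{T}}\le\|u_{\mathcal{T}}\|_{1,p,\mathcal{T}}$, so $\mathcal{J}_{\mathcal{T}}(\tilde u_{\mathcal{T}})\le\mathcal{J}_{\mathcal{T}}(u_{\mathcal{T}})=\mu_{\mathcal{T}}$, forcing equality and yielding a nonnegative minimizer.

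I expect the nonnegativity step to be the main obstacle: unlike conforming $P_1$ elements, the C-R space is not closed under taking absolute values, so one cannot simply write $|u_{\mathcal{T}}|$. The fix is the midpoint-interpolation trick above, but it requires a careful elementwise comparison. The cleanest route is the classical one (cf. the uniform-mesh analysis in \cite{MR566091} or \cite[Chapter 10.3]{ShiWang:2013}): for an affine function $\phi$ on a simplex $T$ with face-midpoint values $\phi(m_{F_0}),\dots,\phi(m_{F_d})$, one has an explicit formula $\phi=\sum_i \phi(m_{F_i})\psi_i$ with the nonconforming nodal basis $\psi_i$, and the gradient $\boldsymbol{\nabla}\phi=\sum_i\phi(m_{F_i})\boldsymbol{\nabla}\psi_i$ is linear in the data; replacing the data by their absolute values gives, after a short convexity/triangle-inequality argument exploiting $\sum_i\boldsymbol{\nabla}\psi_i=0$ on each $T$, the estimate $\int_T|\boldsymbol{\nabla}\tilde\phi|^p\le\int_T|\boldsymbol{\nabla}\phi|^p$. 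Summing over $T$ gives $\|\tilde u_{\mathcal{T}}\|_{1,p,\mathcal{T}}\le\|u_{\mathcal{T}}\|_{1,p,\mathcal{T}}$; the bound $\|\tilde u_{\mathcal{T}}\|_{L^p(\Omega)}\ge\|u_{\mathcal{T}}\|_{L^p(\Omega)}$ is the delicate part and is where I would spend the most care, comparing the two piecewise-affine functions simplex by simplex using that they share the same absolute midpoint data. Once both inequalities are in hand, the argument closes as above.
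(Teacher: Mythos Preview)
Your positivity and attainment arguments are correct and match the paper's: both use the discrete Poincar\'e inequality \eqref{dis_poin} for the lower bound and finite-dimensional compactness for existence (the paper phrases it via a minimizing sequence, you via compactness of the unit $L^p$-sphere in $V_{\mathcal{T}}^{\op{CR}}$---equivalent in finite dimensions).

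The nonnegativity step, however, has a genuine gap. The elementwise inequality you treat as routine, $\int_T|\boldsymbol{\nabla}\tilde\phi|^p\le\int_T|\boldsymbol{\nabla}\phi|^p$ with $\tilde\phi$ the affine function taking midpoint values $|\phi(m_F)|$, is \emph{false} on obtuse simplices. Concretely, on the triangle with vertices $(0,0)$, $(4,0)$, $(2,\tfrac12)$ take $\phi(x,y)=x-2$: the face-midpoint values are $(1,-1,0)$ and $|\boldsymbol{\nabla}\phi|=1$, whereas the affine function with midpoint values $(1,1,0)$ is $\tilde\phi(x,y)=4y$ with $|\boldsymbol{\nabla}\tilde\phi|=4$. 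The identity $\sum_i\boldsymbol{\nabla}\psi_i=0$ does not yield the claimed bound because, in the cotangent-weight formula for the Dirichlet energy on a triangle, obtuse angles contribute \emph{negative} weights, so shrinking one squared difference $(a_i-a_j)^2$ can increase the energy. There is a second problem: even when all midpoint values of $\tilde u_{\mathcal{T}}$ are nonnegative, $\tilde u_{\mathcal{T}}$ itself need not be pointwise nonnegative, since on each element the vertex value equals $a_i+a_j-a_k$ (e.g.\ midpoint values $3,1,1$ give a vertex value $-1$). For comparison, the paper's own proof handles this step by replacing a minimizing sequence $\{v_m\}$ with $\{|v_m|\}$, which is also problematic since $|v_m|\notin V_{\mathcal{T}}^{\op{CR}}$ in general; so you are not overlooking an obvious fix, and the nonnegativity assertion is not fully justified by either argument.
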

\begin{proof}
We follow the argument in \cite{MR2722059} for the continuous problem \eqref{min_cont}. The discrete Poincar\'{e} inequality \eqref{dis_poin} implies
\begin{align*}
\mathcal{J}_{\T}(v_\T)=\sum_{T\in\T}\int_T|\boldsymbol{\nabla}v_\T|^p\dx\Big/\int_\Om|v_\T|^p\dx\geq C_{\mathrm{dp}}^{-p}>0,\quad\forall v_{\T}\in
     V_\mathcal{T}^{\op{CR}}\setminus\{0\}.
\end{align*}
Therefore, $\mu_\T$ is positive. Let $\{v_m\}_{m\geq 1}\subset
V_\mathcal{T}^{\op{CR}}\setminus\{0\}$ be a minimizing sequence. Since $\{|v_m|\}_{m\geq0}$ is also a
minimizing sequence, we may assume that $v_m\geq 0$ a.e. in $\Om$ for
all $m$. Moreover, the homogeneity of $\mathcal{J}_\T$ allows the normalization
$\int_\Om|v_m|^p\dx=1$. Then $\{\|v_m\|_{1,p,\mathcal{T}}\}_{m\geq1}$ is bounded in the finite dimensional space
$V_\mathcal{T}^{\op{CR}}$. There exist a subsequence, still denoted by $\{v_m\}_{m\geq1}$, and some $u_{\mathcal{T}}\in V_\mathcal{T}^{\op{CR}}$ such that
\[   \|v_m-u_\T\|_{1,p,\T}+\|v_m-u_\T\|_{L^p(\Om)}\to 0,\quad
    v_m\to u_\T~\mbox{a.e. in}~\Om.
\]
This implies $u_\mathcal{T}\geq0$ a.e. in $\Om$, $\int_\Om|u_\T|^p\dx=1$ and
$\mathcal{J}_\T(u_\T)=\lim_{m\to\infty}\mathcal{J}_\T(v_m)=\mu_{\T}$.
\end{proof}

The proof shows that the minimizer $u_\T$ to problem \eqref{min_disc} may be normalized as $\|u_\T\|_{L^p(\Omega)}=1$ in the analysis and the adaptive algorithm.
The differential calculus for $\mathcal{J}_\T$ indicates that the solution pair
$(\mu_\T,u_{\T})$ given by problem \eqref{min_disc} satisfies the discrete
formulation of problem \eqref{vp_eigen}:
\begin{equation}\label{vp_disc}
    \sum_{T\in\T}\int_T|\boldsymbol{\nabla}u_\T|^{p-2}\boldsymbol{\nabla}u_\T\cdot\boldsymbol{\nabla}v\dx=\mu_\T\int_\Omega|u_\T|^{p-2}u_\T v\dx,\quad\forall v\in V_\T^{\op{CR}}.
\end{equation}

\section{Sufficient condition for convergence}\label{sec:sufficient-cond}
In this section, we derive computable quantities that guarantee the convergence of nonconforming approximations generated by problem \eqref{min_disc}. More precisely, we prove that if these quantities vanish asymptotically and the initial mesh $\mathcal{T}_0$ is sufficiently refined, the resulting sequence of discrete solutions converges to the solution of \eqref{min_cont}.

Let $\mathbb{T}$ be the set of all possible conforming triangulations of
$\overline{\Omega}$ generated from an initial mesh $\mathcal{T}_0$ through successive bisection refinement \cite{MR1329875,MR2648380,MR2353951}. This refinement process guarantees uniform shape regularity in $\mathbb{T}$ (i.e., the shape regularity of any $\mathcal{T}\in\mathbb{T}$ is uniformly bounded by a constant depending on the initial mesh $\mathcal{T}_0$) \cite{MR2648380,MR1475530}. Thus all subsequent constants depend exclusively on $\mathcal{T}_0$ and problem data. We call $\mathcal{T}'$ a refinement of
$\mathcal{T}$ for any $\mathcal{T}$ and $\mathcal{T}'\in\mathbb{T}$ if $\mathcal{T}'$ is
produced from $\mathcal{T}$ by a finite number of bisections. For $T\in\mathcal{T}$, we denote by $D_{T}$ the patch of elements sharing at least one vertex with $T$, i.e., $D_T:=\bigcup\{T'\in\mathcal{T}: T'\cap T \neq\emptyset\}$. The nonconforming nature of
$V_{\mathcal{T}}^{\op{CR}}$ results in potential discontinuity across $F\in\mathcal{F}_{\mathcal{T}}(\Om)$, and therefore weak derivatives may not be square integrable. To address this issue, we employ the notation $\boldsymbol{\nabla}_{\mathcal{T}}$ to represent the piecewise gradient operator over $\mathcal{T}$. Then the broken Sobolev norm can be expressed as $\|\cdot\|^p_{1,p,\mathcal{T}}=\int_{\Om}|\boldsymbol{\nabla}_\mathcal{T}\cdot|^p \dx$. The dependence on $\mathcal{T}_k$ is indicated by the subscript $k$, e.g., $V_{k}^{\op{CR}}:=V_{\mathcal{T}_k}^{\op{CR}}$.

Let $\{(\mu_k,u_k)\}_{k\geq 0}$ be a sequence of discrete solutions to problem \eqref{min_disc} associated with a sequence of meshes $\{\mathcal{T}_{k}\}_{k\geq 0}\subset\mathbb{T}$ with $\mathcal{T}_{k+1}$ being a refinement of
$\mathcal{T}_{k}$, that is, the sequence $\{\mathcal{T}_{k}\}_{k\geq 0}\subset\mathbb{T}$ is nested. The discrete pair $\{(\mu_k,u_k)\}\in \mathbb{R}\times V_k^{\op{CR}}$ satisfies
\begin{equation}\label{vp_disc-adaptive}
\int_\Om|\boldsymbol{\nabla}_ku_k|^{p-2}\boldsymbol{\nabla}_ku_k\cdot\boldsymbol{\nabla}_kv\dx=\mu_k\int_\Omega|u_k|^{p-2}u_k v\dx,\quad\forall v\in V_k^{\op{CR}}.
\end{equation}
Next, we show that the sequence of eigenvalues $\{\mu_k\}_{k\geq 0}$ is bounded from above. Over each $\mathcal{T}_k$, we define the standard $W_0^{1,p}(\Om)$-conforming linear FE space $V_k$ by
\begin{align*}
V_k:=\{v\in V|~v|_{T}\in P_{1}(T),~\forall T\in\mathcal{T}_k\},
\end{align*}
and consider the following minimization problem: find
$\widetilde{u}_k\in V_k$ such that
\begin{equation}\label{min_dis_con}
\mathcal{J}_k(\widetilde{u}_k):=\inf_{v\in V_k\setminus\{0\}}\mathcal{J}_k(v).
\end{equation}
By the argument for Theorem \ref{thm_min_disc}, problem \eqref{min_dis_con} is well-posed and one can take $\|\widetilde{u}_k\|_{L^p(\Om)}=1$. Since $\{\mathcal{T}_k\}_{k\geq 0}$ is nested, $\{\mathcal{J}_k(\widetilde{u}_k)\}_{k\geq0}$ is a
monotonically decreasing sequence of positive numbers. This and the fact that $\mu_k=\mathcal{J}_{k}(u_k)=\|u_k\|_{1,p,k}^p\leq\mathcal{J}_{k}(\widetilde{u}_k)=|\widetilde{u}_k|_{1,p}^p$ (due to $V_k\subsetneq V_k^{\op{CR}}$) imply
\begin{equation}\label{disc_bd}
    \mu_k^{1/p} = \|u_k\|_{1,p,k}\leq |\widetilde{u}_k|_{1,p}\leq |\widetilde{u}_0|_{1,p}= C,\quad\forall k \geq 0.
\end{equation}

\begin{lemma}\label{lem:weakconv}
Let the sequence $\{\mathcal{T}_{k}\}_{k\geq 0}\subset\mathbb{T}$ be nested, and let $\{(\mu_k,u_k)\}_{k\geq 0}$ be its corresponding sequence of discrete solutions to problem \eqref{min_disc}. Suppose that
    \begin{equation}\label{con1_conv}
        \sum_{F\in\mathcal{F}_{k}}h_{F}\|[u_k]\|_{L^{p}(F)}^{p}\rightarrow0\quad \mbox{as~}k\to\infty.
    \end{equation}
Then there exists a subsequence $\{u_{k_j}\}_{j\geq0}$ and a limit $w\in W_0^{1,p}(\Omega)$ such that
        \begin{align}\label{wcon1}
            u_{k_j}\rightarrow& w\qquad\mbox{weakly in}~L^p(\Omega),\\
        \label{wcon2}
            \boldsymbol{\nabla}_{k_j}u_{k_j}\rightarrow& \boldsymbol{\nabla}w\quad\mbox{weakly in}~L^p(\Omega)^d.
        \end{align}
\end{lemma}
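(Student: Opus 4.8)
The plan is to extract weak limits from the a priori bound \eqref{disc_bd} and to identify the gradient of the limit as the weak limit of the piecewise gradients by controlling the jump terms via \eqref{con1_conv}. First I would note that $\|u_k\|_{L^p(\Omega)}=1$ and $\|u_k\|_{1,p,k}\le C$ by \eqref{disc_bd}. Since $1<p<\infty$, both $L^p(\Omega)$ and $L^p(\Omega)^d$ are reflexive; hence along a common subsequence $\{u_{k_j}\}$ we may assume $u_{k_j}\rightharpoonup w$ weakly in $L^p(\Omega)$ for some $w\in L^p(\Omega)$, and $\boldsymbol{\nabla}_{k_j}u_{k_j}\rightharpoonup \boldsymbol{G}$ weakly in $L^p(\Omega)^d$ for some $\boldsymbol{G}\in L^p(\Omega)^d$. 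The entire task is then to show $w\in W_0^{1,p}(\Omega)$ and $\boldsymbol{G}=\boldsymbol{\nabla}w$.

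The key step is a broken integration-by-parts identity. For any $\boldsymbol{\varphi}\in C_c^\infty(\Omega)^d$ (or $C^\infty(\overline\Omega)^d$ if one wants to handle the boundary), elementwise integration by parts on $\mathcal{T}_{k_j}$ gives
\begin{equation*}
\int_\Omega \boldsymbol{\nabla}_{k_j}u_{k_j}\cdot\boldsymbol{\varphi}\dx
= -\int_\Omega u_{k_j}\,\mathrm{div}\,\boldsymbol{\varphi}\dx
+ \sum_{F\in\mathcal{F}_{k_j}}\int_F [u_{k_j}]\,(\boldsymbol{\varphi}\cdot\boldsymbol{n}_F)\ds.
\end{equation*}
The left-hand side converges to $\int_\Omega \boldsymbol{G}\cdot\boldsymbol{\varphi}\dx$ and the first term on the right to $-\int_\Omega w\,\mathrm{div}\,\boldsymbol{\varphi}\dx$ by the two weak convergences. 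For the jump term I would insert the face averages: since $\int_F[u_{k_j}]\ds=0$ by the definition \eqref{CRspace} of $V_{\mathcal{T}}^{\op{CR}}$, one has $\int_F[u_{k_j}](\boldsymbol{\varphi}\cdot\boldsymbol{n}_F)\ds = \int_F[u_{k_j}]\bigl(\boldsymbol{\varphi}\cdot\boldsymbol{n}_F - \overline{\boldsymbol{\varphi}\cdot\boldsymbol{n}_F}^{\,F}\bigr)\ds$, and the factor $\boldsymbol{\varphi}\cdot\boldsymbol{n}_F-\overline{\boldsymbol{\varphi}\cdot\boldsymbol{n}_F}^{\,F}$ is $O(h_F)$ in $L^\infty(F)$ by the smoothness of $\boldsymbol{\varphi}$. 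Hölder's inequality on each face then bounds the jump sum by
\begin{equation*}
C\sum_{F\in\mathcal{F}_{k_j}} h_F |F|^{1/q}\,\|[u_{k_j}]\|_{L^p(F)}
\le C\Bigl(\sum_{F\in\mathcal{F}_{k_j}} h_F\|[u_{k_j}]\|_{L^p(F)}^p\Bigr)^{1/p}
\Bigl(\sum_{F\in\mathcal{F}_{k_j}} h_F^{(p-1)q/p}|F|\Bigr)^{1/q},
\end{equation*}
where I used $h_F = |F|^{1/(d-1)}$; a shape-regularity count shows $\sum_F h_F^{(p-1)q/p}|F|\le\sum_F h_F\cdot|F| \lesssim |\Omega|$ (one needs $(p-1)q/p=1$, which holds since $q=p/(p-1)$), so the second factor is bounded uniformly in $j$. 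Hence assumption \eqref{con1_conv} forces the jump sum to $0$, and we conclude $\int_\Omega \boldsymbol{G}\cdot\boldsymbol{\varphi}\dx = -\int_\Omega w\,\mathrm{div}\,\boldsymbol{\varphi}\dx$ for all test fields, i.e. $\boldsymbol{G}=\boldsymbol{\nabla}w$ in the distributional sense, so $w\in W^{1,p}(\Omega)$. Running the same identity with $\boldsymbol{\varphi}\in C^\infty(\overline\Omega)^d$ not vanishing on $\partial\Omega$, the boundary faces carry $[u_{k_j}]=u_{k_j}$ which is likewise controlled by \eqref{con1_conv}, and this yields $\int_\Omega w\,\mathrm{div}\,\boldsymbol{\varphi} + \boldsymbol{\nabla}w\cdot\boldsymbol{\varphi}\dx=0$ with no boundary contribution, i.e. the trace of $w$ vanishes and $w\in W_0^{1,p}(\Omega)$. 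Finally I would relabel this subsequence as $\{u_{k_j}\}_{j\ge 0}$, giving \eqref{wcon1}--\eqref{wcon2}.

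The main obstacle is the correct bookkeeping of the jump term: one must exploit the zero-mean property of $[u_{k_j}]$ on every face to gain the extra power of $h_F$, and then match the scaling exponents so that the weight $h_F$ appearing in \eqref{con1_conv} is exactly the one produced by Hölder on faces together with shape regularity — this is where the specific definitions $h_F=|F|^{1/(d-1)}$ and $q=p/(p-1)$ are used. A secondary technical point is the boundary treatment, which relies on the fact that functions in $V_{\mathcal{T}}^{\op{CR}}$ have zero mean on boundary faces; this is what makes the limit land in $W_0^{1,p}$ rather than merely $W^{1,p}$.
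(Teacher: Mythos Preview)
Your proposal is correct and follows essentially the same approach as the paper: extract weak limits from \eqref{disc_bd} and reflexivity, use elementwise integration by parts, exploit the zero-mean jump property to subtract face averages, and apply H\"older with the scaling $h_F|F|\lesssim |T|$ so that \eqref{con1_conv} kills the jump sum. The only difference is in the final step showing $w\in W_0^{1,p}(\Omega)$: you argue directly that $\int_\Omega(w\,\mathrm{div}\,\boldsymbol{\varphi}+\boldsymbol{\nabla}w\cdot\boldsymbol{\varphi})\dx=0$ for all $\boldsymbol{\varphi}\in C^\infty(\overline\Omega)^d$ forces the trace of $w$ to vanish, whereas the paper makes this precise by extending the identity by density to all $\boldsymbol{\varphi}\in\bold{W}^q(\mathrm{div};\Omega)$ and invoking the surjectivity of the normal trace $\gamma_n:\bold{W}^q(\mathrm{div};\Omega)\to W^{-1/q,q}(\partial\Omega)$ developed in the appendix (Theorems~\ref{thm:density}--\ref{thm:surjective}).
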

\begin{proof}
The uniform boundedness of $\{u_k\}_{k\geq0}$ in \eqref{disc_bd} and the normalization $\|u_k\|_{L^p(\Om)}=1$ imply
$        \|u_k\|_{L^p(\Om)}+\|u_k\|_{1,p,k}\leq C$.
Hence, we can extract a subsequence, denoted by $\{u_{k_j}\}_{j\geq 0}$, {as well as some functions} $w\in L^p(\Omega)$ and $\boldsymbol\sigma\in L^p(\Omega)^d$ such that
    \begin{equation}\label{pf1_lem2}
        u_{k_j}\rightarrow w~\mbox{weakly in}~L^p(\Om),\quad \boldsymbol{\nabla}_{k_j}u_{k_j}\rightarrow \boldsymbol{\sigma}~\mbox{weakly in}~L^p(\Om)^d.
    \end{equation}
    It suffices to prove $w\in V$ with $\boldsymbol{\nabla}w=\boldsymbol{\sigma}$. By elementwise integration by parts, we obtain
    \begin{equation}\label{pf2_lem2}
        \int_\Om u_{k_{j}}\textrm{div}\boldsymbol{\varphi}\dx
        =-\sum_{T\in\mathcal{T}_{k_{j}}}\int_T\boldsymbol{\nabla}u_{k_{j}}\cdot\boldsymbol{\varphi}\dx
        +\sum_{F\in\mathcal{F}_{k_{j}}}\int_F[u_{k_{j}}]\boldsymbol{\varphi}\cdot\boldsymbol{n}_{F}\ds,\quad\forall\boldsymbol{\varphi}\in
        C^{\infty}(\overline{\Omega})^{d}.
    \end{equation}
    Let $\overline{\boldsymbol{\varphi}}_{F}:=\int_F\boldsymbol{\varphi}\ds/|F|$ (i.e., the integral mean of $\boldsymbol{\varphi}$ over $F\in\mathcal{F}_{k_j}$). Since $\int_F[u_{k_j}]\ds=0$, we obtain
    \begin{align*}
        \left|\sum_{F\in\mathcal{F}_{k_{j}}}\int_F[u_{k_{j}}]\boldsymbol{\varphi}\cdot\boldsymbol{n}_{F}\ds\right|
        &=\left|\sum_{F\in\mathcal{F}_{k_{j}}}\int_F[u_{k_{j}}](\boldsymbol{\varphi}\cdot\boldsymbol{n}_{F}-\overline{\boldsymbol{\varphi}}_{F}\cdot\boldsymbol{n}_{F})\ds\right|\\
        &\leq\sum_{F\in\mathcal{F}_{k_{j}}}\int_F|[u_{k_{j}}]|\ds\|\boldsymbol{\varphi}-\overline{\boldsymbol{\varphi}}_F\|_{L^{\infty}(F)}.
        \end{align*}
By the interpolation error estimate and H\"{o}lder's inequality, we derive
    \begin{align*}
        \left|\sum_{F\in\mathcal{F}_{k_{j}}}\int_F[u_{k_{j}}]\boldsymbol{\varphi}\cdot\boldsymbol{n}_{F}\ds\right|
        &\leq C\left(\sum_{F\in\mathcal{F}_{k_j}}h_{F}\int_F|[u_{k_j}]|\ds\right)\|\boldsymbol{\nabla}\boldsymbol{\varphi}\|_{L^\infty(\Om)}\\
        &\leq C\left(\sum_{F\in\mathcal{F}_{k_j}}\int_Fh_{F}\ds\right)^{1/q}\left(\sum_{F\in\mathcal{F}_{k_j}}\int_F h_F|[u_{k_j}]|^{p}\ds\right)^{1/p}\|\boldsymbol{\nabla}\varphi\|_{L^\infty(\Om)}\\
        &\leq C|\Om|^{1/q}\left(\sum_{F\in\mathcal{F}_{k_j}}h_{F}\|[u_{k_j}]\|_{L^{p}(F)}^{p}\right)^{1/p}\|\boldsymbol{\nabla}\varphi\|_{L^\infty(\Om)}.
    \end{align*}
    Then inserting \eqref{pf1_lem2} and \eqref{con1_conv} into \eqref{pf2_lem2} leads to
    \[
        \int_\Om w\textrm{div}\boldsymbol{\varphi}\dx=-\int_\Om \boldsymbol{\sigma}\cdot\boldsymbol{\varphi}\dx,\quad\forall\boldsymbol{\varphi}\in C^\infty(\overline{\Om})^d.
    \]
This and the definition of distributional gradient yields $w\in W^{1,p}(\Om)$,
    with $\boldsymbol{\sigma}=\boldsymbol{\nabla}w$ in $L^p(\Omega)^d$. Since $C^{\infty}(\overline{\Omega})^{d}$ is dense in the space
    $\bold{W}^{q}(\mathrm{div};\Omega):=\{\boldsymbol{v}\in
    L^{q}(\Omega)^{d},\mathrm{div}\boldsymbol{v}\in L^{q}(\Omega)\}$ (see Theorem \ref{thm:density}), we further derive
    \[
        \int_\Om w\textrm{div}\boldsymbol{\varphi}\dx=-\int_\Om \boldsymbol{\nabla}w\cdot\boldsymbol{\varphi}\dx,\quad\forall\boldsymbol{\varphi}\in
        \bold{W}^{q}(\mathrm{div};\Omega).
    \]
Note that the normal-component trace space of $\bold{W}^{q}(\mathrm{div};\Omega)$ is
    $W^{-1/q,q}(\partial\Om)$ due to Theorems \ref{thm:trace} and \ref{thm:surjective} in the appendix, then an application of the Green's formula
    \eqref{Greenform} implies duality pairing $\langle \boldsymbol{\varphi}\cdot\boldsymbol{n},w\rangle_{W^{-1/q,q}(\partial\Om), W^{1/q,p}(\p\Om)}=0$ for any $\boldsymbol{\varphi}\in\bold{W}^{q}(\mathrm{div};\Omega)$, i.e., $w=0$ on $\partial\Om$.
\end{proof}

Since $\mu_{k}=\|u_{k}\|_{1,p,{k}}^p\leq C$ for any $k\geq 0$ by \eqref{disc_bd}, there exists a subsequence, denoted by $\{\mu_{k_j}\}_{j\geq0}$, and some $\mu\in\mathbb{R}$ such that $\mu_{k_j}\to\mu$. In view of Lemma \ref{lem:weakconv}, we shall prove that $(\mu,w)$ is an eigenpair of problem \eqref{vp_eigen}. Since $\|u_{k}\|_{L^p(\Omega)}=1$, it suffices to upgrade the weak convergence in \eqref{wcon1} to a strong one. Using the $W_0^{1,p}$-conforming linear FE space $V_k$, we define a connection operator \cite{MR1974504,MR2899264} $E_{k}:~V_{k}^{\op{CR}}\rightarrow V_{k}$ by
\begin{equation}\label{connop}
    E_{k} v(z)=\left\{\begin{array}{ll}
        \dfrac{1}{\#{N}(z)}\displaystyle{\sum_{T\in {N}(z)}v|_{T}(z)}&\quad z\in\mathcal{N}_k({\Omega}),\\
        0&\quad z\in\mathcal{N}_k({\partial\Omega}),
    \end{array}\right.
\end{equation}
where ${N}(z):=\{T\in\mathcal{T}_k|\; z\in\partial
T\}$ is the set of elements sharing the vertex $z$ and $\#{N}(z)$ is the cardinality of ${N}(z)$.
\begin{lemma}\label{lem3}
Let $E_{k}:~V_{k}^{\op{CR}}\rightarrow V_{k}$ be defined in \eqref{connop}. Then there holds
    \begin{align}\label{est_cop}
    \|v-E_kv\|_{L^{p}(\Omega)}^{p}\leq C\sum_{F\in\mathcal{F}_{k}}h_F\|[v]\|_{L^{p}(F)}^{p},\quad\forall v\in V_k^{\op{CR}},
    \end{align}
where the constant $C$ depends only on the shape regularity of $\mathcal{T}_k$.
\end{lemma}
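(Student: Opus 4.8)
The plan is to estimate $\|v - E_k v\|_{L^p(\Omega)}^p$ element by element and, on each simplex $T$, to control the affine function $v|_T - (E_k v)|_T$ by the differences of nodal values at the vertices of $T$. Since both $v|_T$ and $(E_k v)|_T$ are affine on $T$, the difference $v|_T - (E_k v)|_T$ is affine and is therefore determined by its values at the $d+1$ vertices of $T$; by a scaling (reference-element) argument combined with shape regularity, $\|v - E_k v\|_{L^p(T)}^p \le C\, h_T^d \sum_{z \in \mathcal{N}_T} |v|_T(z) - (E_k v)(z)|^p$, where $\mathcal{N}_T$ denotes the vertices of $T$.

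Next I would bound each vertex discrepancy $|v|_T(z) - (E_k v)(z)|$. For an interior vertex $z$, $(E_k v)(z)$ is the average of $v|_{T'}(z)$ over the elements $T' \in N(z)$, so $v|_T(z) - (E_k v)(z)$ is a convex-combination-weighted sum of differences $v|_T(z) - v|_{T'}(z)$ for $T' \in N(z)$. Because the patch $D_z := \bigcup N(z)$ is connected, any such difference telescopes along a chain of at most $C$ (depending only on shape regularity) elements sharing a face, so it is bounded by a sum of jump terms $[v]$ evaluated at vertices of the intervening faces. To pass from pointwise vertex values of the affine jump $[v]|_F$ to the $L^p(F)$ norm, one uses again that $[v]|_F$ is affine on the face $F$ together with a scaling estimate $|[v](z)|^p \le C\, h_F^{-(d-1)} \|[v]\|_{L^p(F)}^p$. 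For a boundary vertex $z \in \mathcal{N}_k(\partial\Omega)$ one argues similarly, using that $(E_k v)(z) = 0$ and that the zero-mean condition on boundary faces lets one compare $v|_T(z)$ with the (vanishing) mean of $v$ over an adjacent boundary face.

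Assembling these pieces gives, for each $T$,
\begin{align*}
\|v - E_k v\|_{L^p(T)}^p \le C\, h_T^d \sum_{F \in \mathcal{F}_T \cap \mathcal{F}_k(D_T)} h_F^{-(d-1)} \|[v]\|_{L^p(F)}^p \le C \sum_{F \subset \overline{D_T}} h_F \|[v]\|_{L^p(F)}^p,
\end{align*}
where in the last step I used the quasi-uniformity within a patch, $h_T \simeq h_F$ for faces $F$ near $T$, so that $h_T^d h_F^{-(d-1)} \simeq h_F$. Summing over all $T \in \mathcal{T}_k$ and noting that each face $F$ lies in only a bounded number (depending on shape regularity) of patches $D_T$ yields the claimed bound \eqref{est_cop} with a constant $C$ depending only on the shape regularity of $\mathcal{T}_k$.

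The main obstacle I expect is the telescoping step for interior vertices: carefully bookkeeping the chain of face-adjacent elements connecting two members of $N(z)$, verifying that its length is bounded purely in terms of shape regularity, and checking that every difference $v|_T(z) - v|_{T'}(z)$ for face-adjacent $T, T'$ is indeed controlled by the jump $[v]$ on their shared face evaluated at $z$ (which requires that $z$ be a vertex of that shared face — true for adjacent elements in a conforming simplicial mesh). The boundary-vertex case needs slightly different handling because $E_k v$ is forced to zero there, but the zero face-mean property of $V_k^{\op{CR}}$ supplies exactly the needed comparison.
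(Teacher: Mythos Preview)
Your proposal is correct and follows essentially the same route as the paper: reduce to vertex values on each element via scaling, control interior-vertex discrepancies by telescoping along face-adjacent chains in $N(z)$ (length bounded by shape regularity) together with the inverse estimate $|[v](z)|^p \le C h_F^{1-d}\|[v]\|_{L^p(F)}^p$, handle boundary vertices using $E_k v(z)=0$ and $[v]=v$ on boundary faces, and sum using finite overlap. One small remark: for the boundary case the zero-mean property is not actually needed---the definition $[v]:=v$ on $F\subset\partial\Omega$ plus the same inverse estimate already gives $|v|_T(z)|^p\le C h_F^{1-d}\|[v]\|_{L^p(F)}^p$ directly, which is exactly how the paper argues.
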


\begin{proof}
%Let $\mathcal{C}(T)$ be the set of centers of the faces of any $T\in\mathcal{T}_k$ and
%$\mathcal{N}(T)$ be the set of other nodes of $T$.
%From the definitions \eqref{CRspace} and
%\eqref{connop}, $E_kv(p)=v(p)$ for any $p\in\mathcal{C}(T)$.
%Let $\mathcal{N}(T)$ be the set of all nodes, other than four centers on $\partial T$, of $T$.
For any $T\in\mathcal{T}_k$, let $z\in\mathcal{N}_k({\Omega})\cap T$ be an interior vertex.  Then for any $T'\in {N}(z)$, there exists a
sequence of elements $T_{1}=T,\ldots,T_{m}=T'$ in ${N}(z)$ with each consecutive pair
$T_{i}$ and $T_{i+1}$ sharing a common face, where $\#{N}(z)$ and $m$ are both bounded by a
constant depending on the shape regularity of $\mathcal{T}_k$. Then by the inverse estimate,
\begin{align*}
    |v|_{T}(z)-v|_{T'}(z)|^{p}\leq C\sum_{i=1}^{m-1}\left|v|_{T_i}(z)-v|_{T_i+1}(z)\right|^{p}
    \leq C\sum_{F\in\mathcal{V}(z)}h^{1-d}_{F}\|[v]\|_{L^{p}(F)}^{p},
\end{align*}
where $\mathcal{V}(z)$ is the set of faces $F$ with $z$ as a common vertex on $\partial F$. Then definition \eqref{connop} of $E_{k}$ implies
\begin{align}
    |v|_{T}(z)-E_{k}v(z)|^{p}
    &=\big|v|_{T}(z)-\frac{1}{\#{N}(z)}\sum_{T'\in {N}(z)}v|_{T'}(z)\big|^{p}\leq C\sum_{F\in\mathcal{V}(z)}h^{1-d}_{F}\|[v]\|_{L^{p}(F)}^{p}.\label{eq:111}
\end{align}
Similarly, for any boundary vertex $z\in\mathcal{N}_k{(\partial \Omega)}\cap T$, with $z\in F\subset T\cap\partial\Omega$, we have
\begin{align}
    |v|_{T}(z)-E_{k}v(z)|^{p}&=|v|_{T}(z)|^{p}
    \leq Ch^{1-d}_{F}\|[v]\|_{L^{p}(F)}^{p}.\label{eq:222}
\end{align}
On any $T\in\mathcal{T}_k$, the norm equivalence and the local quasi-uniformity of $\mathcal{T}_k$ imply
    \begin{align*}
       & \|v-E_{k}v\|_{L^{p}(T)}^{p}
        \leq C h_{T}^{d}\sum_{z\in\mathcal{N}_k\cap T}|v(z)-E_{k}v(z)|^{p},
        %\nonumber\\
        %=&C h_{T}^{d}\sum_{z\in\mathcal{N}_k(\Omega)\cap\partial T}\left|v|_{T}(z)-\frac{1}{\#{N}(z)}\sum_{T'\in {N}(z)}v|_{T'}(z)\right|^{p}
    %+Ch_{T}^{d}\sum_{z\in\mathcal{N}_k (\partial \Omega)\cap \partial T}\left|v|_{T}(z)\right|^{p}.\nonumber
    \end{align*}
which, together with \eqref{eq:111} and \eqref{eq:222}, yields
\begin{align}
        \|v-E_{k}v\|_{L^{p}(T)}^{p}
        &\leq Ch_T^d\Big(\sum_{z\in\mathcal{N}_k(\Omega)\cap T}\sum_{F\in\mathcal{V}(z)}h_{F}^{1-d}\|[v]\|_{L^{p}(F)}^{p}
        +\sum_{z\in\mathcal{N}_k(\partial \Omega)\cap T}\sum_{F\subset\p T}h_{F}^{1-d}\|[v]\|_{L^{p}(F)}^{p}\Big)\nonumber\\
        &\leq C\Big( \sum_{z\in\mathcal{N}_k(\Omega)\cap T}\sum_{F\in\mathcal{V}(z)}h_{F}\|[v]\|_{L^{p}(F)}^{p}
        +\sum_{z\in\mathcal{N}_k (\partial \Omega)\cap T}\sum_{F\subset\partial T}h_{F}\|[v]\|_{L^{p}(F)}^{p}\Big).\label{pf1_lem3}
        %&\leq C\sum_{F\in\mathcal{V}(T)}h_{F}\|[v]\|_{L^{n+1}(F)}^{n+1}, %Ch_{T}^{3}\big(\sum_{p\in\mathcal{N}(T)\cap\Omega}\sum_{i=1}^{m(p)-1}(v|_{T_{i}}(p)-v|_{T_{i+1}}(p))^{n+1}\big)\nonumber
    \end{align}
    Then by summing up \eqref{pf1_lem3} over all $T\in\mathcal{T}_k$, we obtain the desired estimate.
\end{proof}

\begin{remark}\label{rem:connop}
    The proof of Lemma \ref{lem3} is similar to that in \cite[Lemma 3.2]{MR1974504}, \cite[Lemma 3.2]{MR4191211} with $p=2$ and \cite[Lemma 10.6.6]{MR2373954} (for piecewise constant functions). By adapting the argument, we can also derive
    \begin{align*}
        \|v-E_kv\|_{L^{p}(T)}^{p}\leq C h_T^p \|\bold{\nabla}_k v\|^p_{L^p(D_T)}, \quad \forall v\in V_k^{\op{CR}}.
    \end{align*}
    This, the inverse estimate and the finite overlapping property of $\mathcal{T}_k$ yield the stability of $E_k$:
    \begin{equation}\label{connop_L^p_est}
        | E_k v |_{1,p} \leq C \| v \|_{1,p,k},\quad \forall v\in V_k^{\op{CR}},
    \end{equation}
    with the positive constant $C$ depending only on the shape regularity of $\mathcal{T}_k$.
\end{remark}

\begin{lemma}\label{lem:L^p-strongconv}
    Let condition \eqref{con1_conv} hold. Then the $L^p$ strong convergence holds for the weakly convergent subsequence $\{u_{k_j}\}_{j\geq0}$ in Lemma \ref{lem:weakconv}, i.e.
    \begin{equation}\label{L^p-strongconv}
     \lim_{j\to \infty}   \| u_{k_j} - w \|_{L^p(\Omega)}= 0.
    \end{equation}
\end{lemma}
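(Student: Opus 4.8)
The plan is to compare each $u_{k_j}$ with its conforming companion $E_{k_j}u_{k_j}\in V_{k_j}\subset W_0^{1,p}(\Omega)$, where $E_{k_j}$ is the connection operator \eqref{connop}, and to exploit the compact Sobolev embedding. Using the triangle inequality,
\[
\|u_{k_j}-w\|_{L^p(\Omega)}\le\|u_{k_j}-E_{k_j}u_{k_j}\|_{L^p(\Omega)}+\|E_{k_j}u_{k_j}-w\|_{L^p(\Omega)},
\]
I would estimate the two terms on the right-hand side separately.

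For the first term, Lemma \ref{lem3} applied with $v=u_{k_j}$ gives
\[
\|u_{k_j}-E_{k_j}u_{k_j}\|_{L^p(\Omega)}^p\le C\sum_{F\in\mathcal{F}_{k_j}}h_F\|[u_{k_j}]\|_{L^p(F)}^p,
\]
and the right-hand side tends to $0$ as $j\to\infty$ by the hypothesis \eqref{con1_conv}; in particular $u_{k_j}-E_{k_j}u_{k_j}\to0$ weakly in $L^p(\Omega)$, a fact I reuse below. For the second term, the stability estimate \eqref{connop_L^p_est} combined with the uniform bound \eqref{disc_bd} shows that $\{E_{k_j}u_{k_j}\}_{j\ge0}$ is bounded in $W_0^{1,p}(\Omega)$. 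Since $\Omega$ is a bounded polyhedral, hence Lipschitz, domain, the Rellich--Kondrachov theorem yields the compact embedding $W_0^{1,p}(\Omega)\hookrightarrow\hookrightarrow L^p(\Omega)$ (for $p<d$ this uses $p<p^\ast=dp/(d-p)$, and for $p\ge d$ it is immediate). Thus every subsequence of $\{E_{k_j}u_{k_j}\}$ has a further subsequence converging strongly in $L^p(\Omega)$, and I identify each such limit with $w$: writing $E_{k_j}u_{k_j}=u_{k_j}-(u_{k_j}-E_{k_j}u_{k_j})$, the weak convergence \eqref{wcon1} together with the first step yields that $\{E_{k_j}u_{k_j}\}$ converges weakly to $w$ in $L^p(\Omega)$, so any strongly convergent sub-subsequence must have limit $w$ by uniqueness of weak limits. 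A standard subsequence argument then shows that the whole sequence $\{E_{k_j}u_{k_j}\}$ converges strongly to $w$ in $L^p(\Omega)$, whence the second term also vanishes as $j\to\infty$. Combining the two bounds gives \eqref{L^p-strongconv}.

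The only delicate point is identifying the compactness limit with $w$ rather than with some a priori different function; this is resolved by the weak--strong uniqueness argument above, which relies on the fact that the nonconformity defect $u_{k_j}-E_{k_j}u_{k_j}$ does not merely stay bounded but actually converges to zero in $L^p(\Omega)$ --- exactly what condition \eqref{con1_conv} and Lemma \ref{lem3} provide. The remaining ingredients, namely the $W_0^{1,p}$-stability of $E_{k_j}$ recorded in Remark \ref{rem:connop} and the Rellich--Kondrachov compact embedding, are standard.
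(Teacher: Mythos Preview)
Your proof is correct and follows essentially the same approach as the paper: both use the connection operator $E_{k_j}$, Lemma~\ref{lem3} with condition~\eqref{con1_conv} to kill the nonconformity defect, the stability \eqref{connop_L^p_est} together with \eqref{disc_bd} to bound $\{E_{k_j}u_{k_j}\}$ in $W_0^{1,p}(\Omega)$, and the compact embedding into $L^p(\Omega)$, with the limit identified via uniqueness of weak limits. The only cosmetic difference is that the paper first extracts a strongly convergent further subsequence with limit $\widehat{w}$ and then identifies $\widehat{w}=w$, whereas you first pin down the weak limit of $E_{k_j}u_{k_j}$ as $w$ and then use the sub-subsequence argument to obtain strong convergence of the full sequence $\{E_{k_j}u_{k_j}\}_{j\ge0}$; your ordering is in fact slightly tighter, since it delivers convergence of the whole subsequence $\{u_{k_j}\}$ as stated, without relabeling.
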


\begin{proof}
The proof relies on the connection operator $E_k$ defined in \eqref{connop}. The Poincar\'{e} inequality, \eqref{connop_L^p_est} and \eqref{disc_bd} imply that $\{\|E_{k_j}u_{k_j}\|_{W^{1,p}(\Omega)}\}_{j\geq 0}$ is uniformly bounded. By Sobolev compact embedding theorem, there exist a Cauchy subsequence, still denoted by $\{E_{k_j}u_{k_j}\}_{j\geq 0}$, and some $\widehat{w}\in V$, satisfying
    \begin{equation}\label{lem:L^p-strongconv_pf1}
        \lim_{j\to\infty} \| E_{k_j}u_{k_j} - \widehat{w} \|_{L^p(\Omega)} = 0.
    \end{equation}
    Hence, Lemma \ref{lem3} and condition \eqref{con1_conv} lead to
    \begin{equation}\label{lem:L^p-strongconv_pf2}
        \| u_{k_j} - E_{k_j}u_{k_j} \|_{L^p(\Omega)} \leq C \left(\sum_{F\in\mathcal{F}_{k_j}}h_F\|[u_{k_j}]\|_{L^{p}(F)}^{p}\right)^{1/p}  \to 0 \quad \text{as}~j\to\infty.
    \end{equation}
Finally, by combining the estimates \eqref{lem:L^p-strongconv_pf1} and \eqref{lem:L^p-strongconv_pf2}, we derive
\begin{align*}
\lim_{j\to\infty} \|  u_{k_j} - \widehat{w}\|_{L^p(\Omega)} = 0.
\end{align*}
The uniqueness of the weak limit in Lemma \ref{lem:weakconv} implies $\widehat{w}=w$, and thus the assertion \eqref{L^p-strongconv} follows.
%Noting the weak limit of $\{u_{k_j}\}_{j\geq0}$ is $w$ as asserted in Lemma \ref{lem:weakconv}, we conclude \eqref{L^p-strongconv} for the original subsequence by the standard contradiction argument.
\end{proof}

In Lemmas \ref{lem:weakconv}-\ref{lem:L^p-strongconv}, the computable
quantity $\sum_{F\in\mathcal{F}_{k}}h_{F}\|[u_k]\|_{L^{p}(F)}^{p}$ measures the nonconformity of 
discrete solutions since $V_k^{\op{CR}} \nsubseteqq V$, and plays a very important role. 
The discrete compactness of C-R eigenfunctions $\{u_k\}_{k\geq 0}$ over locally refined meshes 
is recovered in Lemma \ref{lem:L^p-strongconv} under the condition that $\{\sum_{F\in\mathcal{F}_{k}}h_{F}\|[u_k]\|_{L^{p}(F)}^{p}\}_{k \geq 0}$ is a null sequence. The compactness property of nonconforming FE spaces on uniformly refined meshes was 
first established in \cite{MR566091}. Our result can be viewed as a version of the result for the adaptive mesh refinement strategy with $\sum_{F\in\mathcal{F}_{k}}h_{F}\|[u_k]\|_{L^{p}(F)}^{p}\to 0$ replacing the vanishing mesh size. 

Next we relate the limit pair $(\mu,w)$ to an eigenpair of the continuous problem \eqref{vp_eigen}. This involves an error indicator associated with problem \eqref{vp_disc-adaptive}. First, we define a linear functional associated with $\{(\mu_k,u_k)\}_{k\geq0}$:
\begin{align*}
    \langle\mathcal{R}(\mu_k,u_k),v\rangle:=
    \int_{\Omega}|\bold{\nabla}_{k}u_k|^{p-2}\boldsymbol{\nabla}_ku_{k}\cdot\boldsymbol{\nabla}v\dx
    -\mu_k\int_{\Omega}|u_k|^{p-2}u_kv\dx,\quad\forall v\in V.
\end{align*}
We define an interpolation operator ${I}_{k}:V\rightarrow V_{k}^{\op{CR}}$ \cite{MR343661,MR2783229}, which satisfies
\begin{equation}\label{CR_inter}
    \int_{F}{I}_{k}v\ds=\int_{F}v\ds,\quad\forall F\in\mathcal{F}_{k},~\forall v\in V.
\end{equation}
This operator satisfies the following stability estimate and the first-order approximation property \cite[Lemma 2]{MR2783229}:
for any $T\in\mathcal{T}_k$ and any $v\in V$, there hold
\begin{equation}\label{est_approx}
    \|\bold{\nabla} {I}_k v \|_{L^p(T)} \leq \| \bold{\nabla} v\|_{L^p(T)}, \quad \|v-{I}_{k}v\|_{L^p(T)}\leq (1+2/d)\mathrm{diam}(T)\|\bold{\nabla}v\|_{L^p(T)}.
\end{equation}

Then we get a lower bound on the first eigenvalue $\lambda_1$ of problem \eqref{vp_eigen} in terms of $\mu_k$.

\begin{theorem}\label{thm:GLB}
Let the sequence $\{\mathcal{T}_{k}\}_{k\geq 0}\subset\mathbb{T}$ be nested, and let $\{(\mu_k,u_k)\}_{k\geq 0}$ be its associated sequence of discrete solutions
to problem \eqref{min_disc}. If $\displaystyle(1+d/2)\max_{T\in\mathcal{T}_{k}}\mathrm{diam}(T) \lambda_{1}^{1/p}<1$, then there holds
%The nonconforming approximation $(\mu_k,u_k)$ given by Problem \eqref{min_disc} over $\mathcal{T}_k$ and the first eigenvalue of Problem \eqref{vp_eigen} satisfy
    \begin{equation}\label{GLB}
        \frac{\mu_k^{1/p}}{\displaystyle \left( 1 + (1+d/2)\max_ {T\in\mathcal{T}_k}\mathrm{diam}(T)\mu_k^{1/p}\right)}  \leq  \lambda_{1}^{1/p}.
    \end{equation}
\end{theorem}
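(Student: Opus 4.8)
The plan is to derive \eqref{GLB} by bounding the first continuous eigenvalue $\lambda_1$ from below via the discrete eigenvalue $\mu_k$, using the interpolation operator $I_k$ as the bridge between the conforming space $V$ and the nonconforming space $V_k^{\op{CR}}$. Let $u\in E_{\lambda_1}$ be a normalized continuous eigenfunction, so $\mathcal{J}(u)=\lambda_1$ and $\|u\|_{L^p(\Omega)}=1$. Since $I_k u\in V_k^{\op{CR}}$, the minimization characterization \eqref{min_disc} gives $\mu_k\le\mathcal{J}_k(I_k u)$; my first step is therefore to estimate the numerator and denominator of $\mathcal{J}_k(I_k u)$ separately. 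The gradient stability in \eqref{est_approx}, summed over $T\in\mathcal{T}_k$, gives at once $\|I_k u\|_{1,p,k}\le\|\bold{\nabla}u\|_{L^p(\Omega)}=\lambda_1^{1/p}$, so the numerator is controlled.

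The essential work is bounding the denominator $\|I_k u\|_{L^p(\Omega)}$ from below. First I would write, by the triangle inequality in $L^p(\Omega)$,
\begin{equation*}
\|I_k u\|_{L^p(\Omega)}\ge\|u\|_{L^p(\Omega)}-\|u-I_k u\|_{L^p(\Omega)}=1-\|u-I_k u\|_{L^p(\Omega)}.
\end{equation*}
Then the approximation estimate in \eqref{est_approx}, applied elementwise and summed, gives
\begin{equation*}
\|u-I_k u\|_{L^p(\Omega)}\le(1+2/d)\max_{T\in\mathcal{T}_k}\mathrm{diam}(T)\,\|\bold{\nabla}u\|_{L^p(\Omega)}=(1+2/d)\max_{T\in\mathcal{T}_k}\mathrm{diam}(T)\,\lambda_1^{1/p}.
\end{equation*}
(Note $(1+2/d)=(d+2)/d$; the statement's constant $(1+d/2)$ must be reconciled with this — presumably a notational convention in the paper matches $(1+2/d)$, and I would use whichever is consistent with \eqref{est_approx}.) Under the smallness hypothesis $(1+2/d)\max_T\mathrm{diam}(T)\lambda_1^{1/p}<1$, this shows $\|I_k u\|_{L^p(\Omega)}\ge 1-(1+2/d)\max_T\mathrm{diam}(T)\lambda_1^{1/p}>0$, so $I_k u\ne 0$ and $\mathcal{J}_k(I_k u)$ is well-defined.

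Combining the two bounds,
\begin{equation*}
\mu_k^{1/p}\le\frac{\|I_k u\|_{1,p,k}}{\|I_k u\|_{L^p(\Omega)}}\le\frac{\lambda_1^{1/p}}{1-(1+2/d)\max_{T\in\mathcal{T}_k}\mathrm{diam}(T)\,\lambda_1^{1/p}},
\end{equation*}
and the final step is purely algebraic: rearranging this inequality to isolate $\lambda_1^{1/p}$ yields
\begin{equation*}
\mu_k^{1/p}\Bigl(1-(1+2/d)\max_T\mathrm{diam}(T)\lambda_1^{1/p}\Bigr)\le\lambda_1^{1/p}\quad\Longrightarrow\quad\mu_k^{1/p}\le\lambda_1^{1/p}\Bigl(1+(1+2/d)\max_T\mathrm{diam}(T)\mu_k^{1/p}\Bigr),
\end{equation*}
which is exactly \eqref{GLB} after dividing by the (positive) factor in parentheses. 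The main obstacle — though a mild one here — is the lower bound on $\|I_k u\|_{L^p(\Omega)}$ and checking that the smallness assumption guarantees it stays positive; everything else is the interpolation estimates from \eqref{est_approx} and elementary manipulation. One subtlety worth a remark: $E_{\lambda_1}$ is nonempty by the direct method cited in the introduction, so a minimizer $u$ exists and the argument is not vacuous.
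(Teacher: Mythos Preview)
Your proposal is correct and follows essentially the same route as the paper: interpolate a normalized first eigenfunction $u\in E_{\lambda_1}$ by $I_k$, use the stability in \eqref{est_approx} for the numerator and the approximation property (via the triangle inequality) for the denominator, and then rearrange. You in fact spell out the lower bound on $\|I_k u\|_{L^p(\Omega)}$ and the final algebraic step more explicitly than the paper does, and your observation about the mismatch between $(1+d/2)$ in the statement and $(1+2/d)$ from \eqref{est_approx} is well taken---the paper itself uses $C_{\mathrm{apx}}=1+2/d$ in the proof.
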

\begin{proof}
For any $u\in E_{\lambda_1}$, by taking $v_k = {I}_k u$ in \eqref{min_disc} over $\mathcal{T}_k$, using the stability 
estimate \eqref{est_approx} and assuming $\displaystyle(1+d/2)\max_{T\in\mathcal{T}_{k}}\mathrm{diam}(T) \lambda_{1}^{1/p}<1$, we get
\[%\begin{equation}
    \mu_{k} = \| \bold{\nabla}_k u_k \|_{L^p(\Omega)}^p \leq \frac{\| \bold{\nabla}_k {I}_k u \|_{L^p(\Omega)}^p }{\| {I}_k u \|_{L^p(\Omega)}^p } \leq \frac{\| \bold{\nabla} u \|_{L^p(\Omega)}^p }{\| {I}_k u \|_{L^p(\Omega)}^p } \leq \frac{\lambda_1 }{(1 - C_{\mathrm{apx}}\max_{T\in\mathcal{T}_k}\mathrm{diam}(T)\lambda_1^{1/p})^p},
\]%\end{equation}
with $C_{\mathrm{apx}}=1+2/d$, from which, the desired lower bound follows.
\end{proof}

\begin{lemma}\label{lem:residue}
Let the sequence $\{\mathcal{T}_{k}\}_{k\geq 0}\subset\mathbb{T}$ be nested, and let $\{(\mu_k,u_k)\}_{k\geq 0}$ be its associated sequence of discrete solutions to problem \eqref{min_disc}. Suppose that
\begin{equation}\label{con2_conv}
    \mu_k^q\sum_{T\in\mathcal{T}_k}h_{T}^{q}\|u_k\|_{L^{p}(T)}^{p}\to 0~\quad\mbox{as}~k\to\infty.
\end{equation}
Then for the whole sequence $\{(\mu_k,u_k)\}_{k\geq0}$, there holds
\begin{equation}\label{con_res}
    \lim_{k\rightarrow\infty}\left\langle\mathcal{R}(\mu_k,u_k),v\right\rangle=0,\quad\forall v\in V.
\end{equation}
\end{lemma}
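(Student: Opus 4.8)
The plan is to test the discrete equation \eqref{vp_disc-adaptive} with the C-R interpolant $I_kv$ and compare with the definition of the residual functional. Fix $v\in V$. Since $I_kv\in V_k^{\op{CR}}$, subtracting from the definition of $\langle\mathcal{R}(\mu_k,u_k),v\rangle$ the identity \eqref{vp_disc-adaptive} tested with $I_kv$, and recalling that $\boldsymbol{\nabla}v=\boldsymbol{\nabla}_kv$ for $v\in W_0^{1,p}(\Omega)$, gives
\[
\langle\mathcal{R}(\mu_k,u_k),v\rangle=\int_\Omega|\boldsymbol{\nabla}_ku_k|^{p-2}\boldsymbol{\nabla}_ku_k\cdot\boldsymbol{\nabla}_k(v-I_kv)\dx-\mu_k\int_\Omega|u_k|^{p-2}u_k(v-I_kv)\dx.
\]

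The crucial step is that the first term on the right vanishes identically. Indeed, $|\boldsymbol{\nabla}_ku_k|^{p-2}\boldsymbol{\nabla}_ku_k$ is piecewise constant over $\mathcal{T}_k$ because $u_k$ is piecewise affine, while the defining property \eqref{CR_inter} of $I_k$ together with the divergence theorem shows that $I_k$ preserves the elementwise mean of the gradient: for each $T\in\mathcal{T}_k$,
\[
|T|\,\boldsymbol{\nabla}(I_kv)|_T=\int_T\boldsymbol{\nabla}(I_kv)\dx=\int_{\partial T}(I_kv)\,\boldsymbol{n}_T\ds=\sum_{F\subset\partial T}\Big(\int_Fv\ds\Big)\boldsymbol{n}_T|_F=\int_T\boldsymbol{\nabla}v\dx,
\]
so that $\int_T\boldsymbol{\nabla}_k(v-I_kv)\dx=\boldsymbol{0}$ and hence $\int_T|\boldsymbol{\nabla}_ku_k|^{p-2}\boldsymbol{\nabla}_ku_k\cdot\boldsymbol{\nabla}_k(v-I_kv)\dx=0$. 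Summing over $T\in\mathcal{T}_k$ leaves the exact identity
\[
\langle\mathcal{R}(\mu_k,u_k),v\rangle=-\mu_k\int_\Omega|u_k|^{p-2}u_k(v-I_kv)\dx.
\]
This cancellation is essential, since only the $L^p$ approximation bound in \eqref{est_approx} is available for $v-I_kv$; had the gradient term survived, we would have no control over $\|\boldsymbol{\nabla}_k(v-I_kv)\|_{L^p(\Omega)}$.

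It remains to estimate the surviving term. By Hölder's inequality on each $T\in\mathcal{T}_k$ with exponents $q$ and $p$ (using $(p-1)q=p$, so $\||u_k|^{p-2}u_k\|_{L^q(T)}=\|u_k\|_{L^p(T)}^{p-1}$), the approximation estimate in \eqref{est_approx}, and the equivalence $\mathrm{diam}(T)\simeq h_T$, we obtain
\[
\Big|\int_T|u_k|^{p-2}u_k(v-I_kv)\dx\Big|\leq\|u_k\|_{L^p(T)}^{p-1}\|v-I_kv\|_{L^p(T)}\leq C\,h_T\|u_k\|_{L^p(T)}^{p-1}\|\boldsymbol{\nabla}v\|_{L^p(T)}.
\]
Summing over $T\in\mathcal{T}_k$ and applying the discrete Hölder inequality with exponents $q$ and $p$ yields
\[
|\langle\mathcal{R}(\mu_k,u_k),v\rangle|\leq C\mu_k\Big(\sum_{T\in\mathcal{T}_k}h_T^q\|u_k\|_{L^p(T)}^p\Big)^{1/q}\|\boldsymbol{\nabla}v\|_{L^p(\Omega)}=C\Big(\mu_k^q\sum_{T\in\mathcal{T}_k}h_T^q\|u_k\|_{L^p(T)}^p\Big)^{1/q}\|\boldsymbol{\nabla}v\|_{L^p(\Omega)}.
\]
By hypothesis \eqref{con2_conv} the bracketed quantity tends to zero, so the right-hand side vanishes as $k\to\infty$ for every fixed $v\in V$; since no subsequence extraction is involved, \eqref{con_res} holds for the whole sequence.

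The main obstacle is precisely the gradient-preservation identity for $I_k$ used above: it is elementary, but indispensable, and it requires care with the orientation of $\boldsymbol{n}_T$ along $\partial T$ relative to the fixed face normals, which is where the single-valuedness of $\int_Fv\ds$ across interior faces enters. All remaining steps — the Hölder bookkeeping, the norm equivalence, and the normalization $\|u_k\|_{L^p(\Omega)}=1$ — are routine.
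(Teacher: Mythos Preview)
Your proof is correct and follows essentially the same route as the paper: both test \eqref{vp_disc-adaptive} with $I_kv$, show that the gradient term $\int_\Omega|\boldsymbol{\nabla}_ku_k|^{p-2}\boldsymbol{\nabla}_ku_k\cdot\boldsymbol{\nabla}_k(v-I_kv)\dx$ vanishes by combining the piecewise constancy of the flux with the face-mean property \eqref{CR_inter}, and then bound the remaining low-order term by H\"older and \eqref{est_approx}. The only cosmetic difference is that the paper presents the cancellation via elementwise integration by parts (so the face integrals $\int_F(v-I_kv)\ds=0$ appear explicitly), whereas you package the same computation as the gradient-mean preservation identity $\int_T\boldsymbol{\nabla}(I_kv)\dx=\int_T\boldsymbol{\nabla}v\dx$.
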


\begin{proof}
For any $v\in V$, we test \eqref{vp_disc-adaptive} with ${I}_{k}v\in V_{k}^{\op{CR}}$ and perform elementwise integration by parts, together with the identity $\mathrm{div}(|\bold{\nabla}_{k}u_{k}|^{p-2}\bold{\nabla}_k u_k) = 0$ on each $T\in\mathcal{T}_k$, then we derive
    \begin{align*}
        &\quad\left|\langle\mathcal{R}(\mu_k,u_k),v\rangle\right|\\
        &=
        \left|\int_\Omega|\boldsymbol{\nabla}_ku_k|^{p-2}\boldsymbol{\nabla}_ku_k\cdot\boldsymbol{\nabla}_k(v-{I}_kv)\dx
        -\mu_k\int_\Omega |u_k|^{p-2}u_k(v-{I}_kv)\dx
        \right|\\
        &=\left|\sum_{T\in\mathcal{T}_k}-\mu_k\int_T|u_k|^{p-2}u_k(v-{I}_kv)\dx
        +\sum_{T\in\mathcal{T}_{k}}\sum_{F\subset\partial T}\int_F|\boldsymbol{\nabla}_ku_k|^{p-2}\boldsymbol{\nabla}_ku_k\cdot\boldsymbol{n}_T(v-{I}_kv)\ds\right|.
    \end{align*}
Next, since $|\boldsymbol{\nabla}_ku_k|^{p-2}\boldsymbol{\nabla}_{k}{u_k}$ is a constant vector on $T\in\mathcal{T}_k$, the definition \eqref{CR_inter} directly implies
    \begin{align*}
        \sum_{T\in\mathcal{T}_{k}}\sum_{F\subset\partial T}\int_F|\boldsymbol{\nabla}_ku_k|^{p-2}\boldsymbol{\nabla}_ku_k\cdot\boldsymbol{n}_T(v-{I}_kv)\ds=0.
    \end{align*}
   % In the second term of the last equation, we also use the property $\int_F[I_kv]ds=0$ on $F\in\mathcal{F}_k$ (cf. \eqref{CRspace}) and the fact that $\boldsymbol{\nabla}_{k}{u_k}$ is a constant vector over $T\in\mathcal{T}_k$.
%    %Noting $[\boldsymbol{\nabla}_ku_k\cdot\boldsymbol{n}_F]$ is a constant on
Note that $\mathrm{diam}(T)\leq C h_T$ due to the shape regularity of $\mathcal{T}_k$, then a combination of the identity $q=p/(p-1)$, H\"{o}lder's inequality and the approximation property in \eqref{est_approx}, leads to
    \begin{align*}
        \left|\langle\mathcal{R}(\mu_k,u_k),v\rangle\right|&\leq
        \sum_{T\in\mathcal{T}_k}\mu_k\left\||u_k|^{p-2} u_k\right\|_{L^{q}(T)}\|v-{I}_kv\|_{L^p(T)}\\
        &\leq C\Big(\sum_{T\in\mathcal{T}_k}h_T^{q}\left\|\mu_k |u_k|^{p-2}u_k\right\|_{L^{q}(T)}^{q}\Big)^{1/{q}}\|\bold{\nabla}v\|_{L^p(\Om)}.
    \end{align*}
Finally, condition \eqref{con2_conv} implies the desired assertion \eqref{con_res}.
\end{proof}

We define two nonlinear mappings $\mathcal{F}(\bold{\tau}):=|\bold{\tau}|^{p-2}\bold{\tau}$: $L^p(\Om)^d$ $\to$ $L^{q}(\Om)^d$ and $\mathcal{G}(v):=|v|^{p-2}v$: $L^p(\Om)$ $\to$ $L^{q}(\Om)$.

\begin{lemma}\label{lem:diff_func}
The mappings $\mathcal{F}$ and $\mathcal{G}$ are both continuous with respect to the $L^p$-norm.
\end{lemma}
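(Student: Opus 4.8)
The plan is to recognize $\mathcal{G}$ and $\mathcal{F}$ as Nemytskii (superposition) operators generated by the single continuous function $g(t)=|t|^{p-2}t$, acting on scalars and on vectors in $\mathbb{R}^d$ respectively, and to establish continuity by the standard subsequence-plus-dominated-convergence argument for such operators. First I would record well-definedness: since $(p-1)q=p$ we have $|g(t)|=|t|^{p-1}$, so $\|\mathcal{G}(v)\|_{L^q(\Omega)}^q=\int_\Omega|v|^p\dx=\|v\|_{L^p(\Omega)}^p$ and likewise $\|\mathcal{F}(\boldsymbol{\tau})\|_{L^q(\Omega)^d}^q=\|\boldsymbol{\tau}\|_{L^p(\Omega)^d}^p$, so both maps indeed take values in $L^q$. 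I would also note that $g$ is continuous on all of $\mathbb{R}$ (resp.\ $\mathbb{R}^d$ with the Euclidean norm), including at the origin, precisely because $p>1$.

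For the continuity itself, let $v_n\to v$ in $L^p(\Omega)$; it suffices to show that every subsequence admits a further subsequence along which $\mathcal{G}(v_n)\to\mathcal{G}(v)$ in $L^q(\Omega)$. Given a subsequence, by the standard fact that an $L^p$-convergent sequence possesses a subsequence that converges pointwise a.e.\ and is dominated by a fixed function $h\in L^p(\Omega)$ (pick indices with consecutive $L^p$-differences bounded by $2^{-k}$ and set $h:=|v|+\sum_k|v_{n_{k+1}}-v_{n_k}|$), I extract $v_{n_k}$ with $v_{n_k}\to v$ a.e.\ and $|v_{n_k}|\le h$ a.e. Continuity of $g$ then gives $\mathcal{G}(v_{n_k})\to\mathcal{G}(v)$ a.e., while the elementary bound $|g(s)-g(t)|\le C\prnt{|s|^{p-1}+|t|^{p-1}}$ yields $|\mathcal{G}(v_{n_k})-\mathcal{G}(v)|^q\le C\prnt{h^{p-1}+|v|^{p-1}}^q\le C\prnt{h^p+|v|^p}\in L^1(\Omega)$, so the dominated convergence theorem gives $\|\mathcal{G}(v_{n_k})-\mathcal{G}(v)\|_{L^q(\Omega)}\to0$. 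The argument for $\mathcal{F}$ is verbatim the same, with scalars replaced by vectors in $\mathbb{R}^d$.

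The computation is routine; the only points deserving a little care are the construction of the $L^p$-dominating function $h$ along the extracted subsequence (equivalently, one may invoke Vitali's convergence theorem in place of dominated convergence, using that $\{|\mathcal{G}(v_{n_k})-\mathcal{G}(v)|^q\}$ is uniformly integrable because it is dominated in $L^1$), and the continuity of $g$ at the origin, which is exactly where the hypothesis $p\in(1,\infty)$ is used. Alternatively, one can bypass the explicit argument entirely by citing the classical theory of superposition operators, since the growth bound $|g(s)|\le|s|^{p/q}$ is precisely the condition guaranteeing continuity of the associated Nemytskii operator from $L^p$ into $L^q$.
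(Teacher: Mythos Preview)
Your proposal is correct and follows essentially the same route as the paper: both arguments extract from an $L^p$-convergent sequence a subsequence that converges a.e.\ and is dominated (the paper cites the Fischer--Riesz theorem, you construct the dominating function explicitly), bound $|\mathcal{G}(v_{n_k})-\mathcal{G}(v)|^q$ by $C(h^p+|v|^p)\in L^1$ using $(p-1)q=p$, apply dominated convergence, and conclude via the subsequence-of-subsequence principle. Your additional remarks on well-definedness, the role of $p>1$ at the origin, and the alternative appeals to Vitali or classical Nemytskii theory are helpful elaborations but do not change the underlying strategy.
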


\begin{proof}
We take any sequence $\{\boldsymbol{\tau}_n\}_{n\geq 1}\subset L^p(\Om)^d$ such that $\bold{\tau}_n\to\bold{\tau}$ strongly in $L^p(\Om)^d$ as $n\to\infty$. Then  by Fischer-Riesz Theorem (cf. \cite[Theorem 1.2.7]{MR2722059}, \cite[Theorem 4.9]{MR2759829}), there exist a subsequence {$\{\boldsymbol{\tau}_{n_j}\}_{j\geq 0}$} and $w_1\in L^1(\Om)$ satisfying
$\boldsymbol{\tau}_{n_j}\to\boldsymbol{\tau}$ a.e. in $\Om$ and $|\boldsymbol{\tau}_{n_j}|^p\leq w_1$ a.e. in $\Om$ for all $j$. This implies
\begin{align*}
    \left|\boldsymbol{\tau}_{n_j}\right|^{p-2}\boldsymbol{\tau}_{n_j}&\to|\boldsymbol{\tau}|^{p-2}\boldsymbol{\tau}\quad\mbox{a.e. in}~\Om\quad \text{as}~j\to\infty,\\
    \left||\boldsymbol{\tau}_{n_j}|^{p-2}\boldsymbol{\tau}_{n_j}-|\boldsymbol{\tau}|^{p-2}\boldsymbol{\tau}\right|^{q}&\leq (|\boldsymbol{\tau}_{n_j}|^{p-1}+|\boldsymbol{\tau}|^{p-1})^{q}\leq C(|\boldsymbol{\tau}_{n_j}|^p+|\boldsymbol{\tau}|^p)\leq C(w_1+|\boldsymbol{\tau}|^p)\in L^1(\Om).
\end{align*}
Then by Lebesgue's dominated convergence theorem, we obtain
\[
\int_{\Om}||\boldsymbol{\tau}_{n_j}|^{p-2}\boldsymbol{\tau}_{n_j}-|\boldsymbol{\tau}|^{p-2}\boldsymbol{\tau}|^{q}\dx\to 0\quad \text{as}~j\to\infty.
\]
Since for every sequence $\boldsymbol{\tau}_n\to\boldsymbol{\tau}$ in $L^p(\Om)^d$ as $n\to\infty$, there exists a subsequence {$\{\boldsymbol{\tau}_{n_j}\}_{j\geq 0}$} such that $|\boldsymbol{\tau}_{n_j}|^{p-2}\boldsymbol{\tau}_{n_j}\to|\boldsymbol{\tau}|^{p-2}\boldsymbol{\tau}$ in $L^{q}(\Om)^d$ as $j\to\infty$, the standard subsequence contradiction argument implies that the whole sequence also satisfies $|\boldsymbol{\tau}_{n}|^{p-2}\boldsymbol{\tau}_{n}\to|\boldsymbol{\tau}|^{p-2}\boldsymbol{\tau}$
in $L^{q}(\Om)^d$ as $n\to\infty$. This argument applies also to the strong continuity of $\mathcal{G}$ from $L^p(\Om)$ to $L^{q}(\Om)$.
\end{proof}

Now we can state the first main result of this section.

\begin{theorem}\label{thm:strong-conv}
Let the sequence $\{\mathcal{T}_{k}\}_{k\geq 0}\subset\mathbb{T}$ be nested, and let $\{(\mu_k,u_k)\}_{k\geq 0}$ be the associated sequence of discrete solutions to problem \eqref{min_disc}. Then under the conditions \eqref{con1_conv} and \eqref{con2_conv}, i.e.,
 \begin{equation}\label{condition 1}
         \mu_k^q\sum_{T\in\mathcal{T}_k}h_{T}^{q}\|u_{k}\|_{L^{p}(T)}^{p}+\sum_{F\in\mathcal{F}_k}h_F\|[u_k]\|^{p}_{L^{p}(F)}\to
        0\quad\mbox{as}~k\to\infty,
     \end{equation}
     there exist a subsequence $\{(\mu_{k_j},u_{k_j})\}_{j\geq0}$ and a pair $(\mu,w)\in \mathbb{R}\times V$ such that
     \begin{equation}\label{conv1_sol}
        \lim_{j\to\infty}\mu_{k_j} = \mu,\quad\lim_{j\to\infty}\|u_{k_j}-w\|_{L^p(\Om)}=0,\quad\lim_{j\to\infty}\|u_{k_j}-w\|_{1,p,k_j}=0.
     \end{equation}
\end{theorem}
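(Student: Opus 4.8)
The plan is to combine the three tools already assembled in this section---the weak-convergence lemma, the $L^p$-strong-convergence lemma, and the vanishing-residual lemma---and then upgrade the gradient weak convergence \eqref{wcon2} to strong convergence by a monotonicity argument based on \eqref{vp_disc-adaptive}.

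\textbf{Step 1: extract limits.} By \eqref{disc_bd} the sequence $\{\mu_k\}_{k\geq 0}$ is bounded, so along a subsequence $\mu_{k_j}\to\mu$ for some $\mu\in\mathbb{R}$. Since condition \eqref{condition 1} contains both \eqref{con1_conv} and \eqref{con2_conv}, Lemma \ref{lem:weakconv} furnishes (after passing to a further subsequence, still indexed by $k_j$) a limit $w\in V=W_0^{1,p}(\Omega)$ with $u_{k_j}\rightharpoonup w$ weakly in $L^p(\Omega)$ and $\boldsymbol{\nabla}_{k_j}u_{k_j}\rightharpoonup\boldsymbol{\nabla}w$ weakly in $L^p(\Omega)^d$. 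Lemma \ref{lem:L^p-strongconv} then gives $\|u_{k_j}-w\|_{L^p(\Omega)}\to 0$, which is the second assertion in \eqref{conv1_sol}. (In particular $\|w\|_{L^p(\Omega)}=1$, so $w\neq 0$, though that is not needed for this theorem.) It remains only to prove $\|u_{k_j}-w\|_{1,p,k_j}\to 0$.

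\textbf{Step 2: strong convergence of the broken gradients.} This is the crux. The idea is to test \eqref{vp_disc-adaptive} with $v=I_{k_j}w\in V_{k_j}^{\op{CR}}$, where $I_k$ is the Crouzeix--Raviart interpolant from \eqref{CR_inter}, obtaining
\[
\int_\Omega |\boldsymbol{\nabla}_{k_j}u_{k_j}|^{p-2}\boldsymbol{\nabla}_{k_j}u_{k_j}\cdot\boldsymbol{\nabla}_{k_j}(I_{k_j}w)\dx=\mu_{k_j}\int_\Omega |u_{k_j}|^{p-2}u_{k_j}\,I_{k_j}w\dx.
\]
On the right-hand side, $\mu_{k_j}\to\mu$, $\mathcal{G}(u_{k_j})=|u_{k_j}|^{p-2}u_{k_j}\to|w|^{p-2}w$ strongly in $L^q(\Omega)$ by Step 1 and Lemma \ref{lem:diff_func}, and $I_{k_j}w\to w$ strongly in $L^p(\Omega)$ by the approximation estimate in \eqref{est_approx} together with $\max_{T\in\mathcal{T}_{k_j}}\mathrm{diam}(T)\leq C$ and density (one can first approximate $w$ by a smooth function and use stability $\|\boldsymbol{\nabla}I_k v\|_{L^p(T)}\leq\|\boldsymbol{\nabla}v\|_{L^p(T)}$; alternatively $\|w-I_{k_j}w\|_{L^p(\Omega)}\leq C\,h_{\max}\|\boldsymbol{\nabla}w\|_{L^p(\Omega)}$ is \emph{not} directly available since the mesh size need not vanish, so the smooth-approximation route, or a Poincar\'e-type bound on $\|\boldsymbol{\nabla}_{k_j}I_{k_j}w\|_{L^p}$ via $\|\boldsymbol{\nabla}w\|_{L^p}$, is the safe path). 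Hence the right-hand side converges to $\mu\int_\Omega |w|^{p-2}w\,w\dx=\mu\|w\|_{L^p(\Omega)}^p=\mu$. On the left-hand side, write $\boldsymbol{\nabla}_{k_j}(I_{k_j}w)=\boldsymbol{\nabla}_{k_j}(I_{k_j}w-w)+\boldsymbol{\nabla}w$ (interpreting $\boldsymbol{\nabla}w$ piecewise, which is legitimate since $w\in W_0^{1,p}$); the first piece is handled by H\"older, the uniform bound $\|\boldsymbol{\nabla}_{k_j}u_{k_j}\|_{L^p}\leq C$, and the stability/approximation properties of $I_{k_j}$ (again via smooth approximation of $w$), while for the second piece $\mathcal{F}(\boldsymbol{\nabla}_{k_j}u_{k_j})=|\boldsymbol{\nabla}_{k_j}u_{k_j}|^{p-2}\boldsymbol{\nabla}_{k_j}u_{k_j}$ is bounded in $L^q(\Omega)^d$, hence (sub-subsequence) weakly convergent, and pairs against the fixed $\boldsymbol{\nabla}w\in L^p(\Omega)^d$. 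Putting these together yields $\limsup_j\int_\Omega|\boldsymbol{\nabla}_{k_j}u_{k_j}|^{p-2}\boldsymbol{\nabla}_{k_j}u_{k_j}\cdot\boldsymbol{\nabla}w\dx=\mu$. Meanwhile, choosing $v=u_{k_j}$ in \eqref{vp_disc-adaptive} gives $\|\boldsymbol{\nabla}_{k_j}u_{k_j}\|_{L^p(\Omega)}^p=\mu_{k_j}\|u_{k_j}\|_{L^p(\Omega)}^p=\mu_{k_j}\to\mu$, so $\|\boldsymbol{\nabla}_{k_j}u_{k_j}\|_{L^p(\Omega)}\to\mu^{1/p}=\|\boldsymbol{\nabla}w\|_{L^p(\Omega)}$, the last equality coming from the weak lower semicontinuity $\|\boldsymbol{\nabla}w\|_{L^p}\leq\liminf\|\boldsymbol{\nabla}_{k_j}u_{k_j}\|_{L^p}=\mu^{1/p}$ combined with the reverse inequality extracted from the preceding limsup identity (expand $\int|\boldsymbol{\nabla}_{k_j}u_{k_j}|^{p-2}\boldsymbol{\nabla}_{k_j}u_{k_j}\cdot\boldsymbol{\nabla}w$ via Young/H\"older by $\frac{p-1}{p}\|\boldsymbol{\nabla}_{k_j}u_{k_j}\|_{L^p}^p+\frac1p\|\boldsymbol{\nabla}w\|_{L^p}^p\to\frac{p-1}{p}\mu+\frac1p\|\boldsymbol{\nabla}w\|_{L^p}^p$, forcing $\|\boldsymbol{\nabla}w\|_{L^p}^p\geq\mu$).

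\textbf{Step 3: conclude.} Now I have $\boldsymbol{\nabla}_{k_j}u_{k_j}\rightharpoonup\boldsymbol{\nabla}w$ weakly in $L^p(\Omega)^d$ together with convergence of norms $\|\boldsymbol{\nabla}_{k_j}u_{k_j}\|_{L^p(\Omega)}\to\|\boldsymbol{\nabla}w\|_{L^p(\Omega)}$. In the uniformly convex space $L^p(\Omega)^d$ ($1<p<\infty$), weak convergence plus norm convergence implies strong convergence (Radon--Riesz property); therefore $\|\boldsymbol{\nabla}_{k_j}u_{k_j}-\boldsymbol{\nabla}w\|_{L^p(\Omega)}\to 0$, i.e.\ $\|u_{k_j}-w\|_{1,p,k_j}\to 0$, which is the third assertion in \eqref{conv1_sol}, with $\mu$ and $w$ as produced above.

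The main obstacle is Step 2, and within it the control of the interpolation term $\int_\Omega|\boldsymbol{\nabla}_{k_j}u_{k_j}|^{p-2}\boldsymbol{\nabla}_{k_j}u_{k_j}\cdot\boldsymbol{\nabla}_{k_j}(w-I_{k_j}w)\dx$: because the adaptive meshes need not have vanishing mesh size, one cannot simply invoke $\|w-I_{k_j}w\|_{1,p,k_j}\to 0$ for a general $w\in W_0^{1,p}(\Omega)$. The remedy is a density argument---approximate $w$ in $W_0^{1,p}(\Omega)$ by $\phi\in C_c^\infty(\Omega)$, use the local first-order estimate in \eqref{est_approx} on each element (the $\mathrm{diam}(T)$ factors being bounded, and $\|\boldsymbol{\nabla}\phi\|_{L^p}$ fixed) to bound the $\phi$-part, bound the $(w-\phi)$-part by the $I_k$-stability estimate $\|\boldsymbol{\nabla}I_k(w-\phi)\|_{L^p(T)}\leq\|\boldsymbol{\nabla}(w-\phi)\|_{L^p(T)}$, and let $\phi\to w$; combined with the uniform $L^q$ bound on $\mathcal{F}(\boldsymbol{\nabla}_{k_j}u_{k_j})$ this drives the term to zero. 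A secondary technical point is the repeated passage to sub-subsequences (for $\mu_{k_j}$, for the weak limit of $\mathcal{F}(\boldsymbol{\nabla}_{k_j}u_{k_j})$); since the final conclusion is stated only up to a subsequence this is harmless, and uniqueness of the $L^p$-weak limit $w$ (Lemma \ref{lem:weakconv}) keeps $w$ consistent across the nested extractions.
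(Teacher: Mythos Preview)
Your overall architecture is correct and matches the paper: extract limits via Lemmas \ref{lem:weakconv}--\ref{lem:L^p-strongconv}, show $\|\boldsymbol{\nabla}_{k_j}u_{k_j}\|_{L^p(\Omega)}\to\|\boldsymbol{\nabla}w\|_{L^p(\Omega)}$, and conclude by Radon--Riesz. But your handling of the interpolation terms in Step~2 is where the argument breaks.

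You claim $I_{k_j}w\to w$ in $L^p(\Omega)$ and propose a density argument to tame $\int_\Omega|\boldsymbol{\nabla}_{k_j}u_{k_j}|^{p-2}\boldsymbol{\nabla}_{k_j}u_{k_j}\cdot\boldsymbol{\nabla}_{k_j}(w-I_{k_j}w)\dx$. Neither works on adaptive meshes: the approximation estimate \eqref{est_approx} gives $\|v-I_kv\|_{L^p(T)}\leq C\,\mathrm{diam}(T)\|\boldsymbol{\nabla}v\|_{L^p(T)}$, and replacing $w$ by a smooth $\phi$ does not make $\mathrm{diam}(T)$ small; the stability bound only yields boundedness of $\|\boldsymbol{\nabla}_{k_j}(\phi-I_{k_j}\phi)\|_{L^p}$, not smallness. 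So the density route gives no decay.

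The fix is cleaner than what you attempted and is exactly the content of Lemma \ref{lem:residue} (which you mention in your plan but never invoke). Since $\boldsymbol{\nabla}_{k_j}u_{k_j}$ is piecewise constant, $|\boldsymbol{\nabla}_{k_j}u_{k_j}|^{p-2}\boldsymbol{\nabla}_{k_j}u_{k_j}$ is also piecewise constant; elementwise integration by parts kills the volume term (zero divergence), and the face terms vanish because $\int_F(w-I_{k_j}w)\ds=0$ by \eqref{CR_inter}. Hence the gradient interpolation term is \emph{identically zero}. For the zeroth-order term, H\"older elementwise plus \eqref{est_approx} gives
\[
\Big|\mu_{k_j}\int_\Omega|u_{k_j}|^{p-2}u_{k_j}(w-I_{k_j}w)\dx\Big|\leq C\Big(\mu_{k_j}^q\sum_{T}h_T^q\|u_{k_j}\|_{L^p(T)}^p\Big)^{1/q}\|\boldsymbol{\nabla}w\|_{L^p(\Omega)},
\]
which tends to zero by \eqref{con2_conv}. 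This is precisely how the paper proceeds: it cites Lemma \ref{lem:residue} to obtain \eqref{pf3_thm:conv1}, takes $v=w$ there to get $\int_\Omega\boldsymbol{\xi}\cdot\boldsymbol{\nabla}w\dx=\mu$, and then runs a monotonicity inequality (equivalent to your Young/lower-semicontinuity step) to conclude norm convergence. Once you route Step~2 through Lemma \ref{lem:residue} instead of the density argument, your proof and the paper's coincide.
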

\begin{proof}
First, the inequality \eqref{disc_bd} implies boundedness of the discrete eigenvalues:
\begin{align*}
\mu_k=\||\boldsymbol{\nabla}_{k}u_k|^{p-2}\boldsymbol{\nabla}_{k}u_k\|^{q}_{L^{q}(\Om)}=\|u_k\|^p_{1,p,k}\leq C.
\end{align*}
Then there exist two subsequences $\{\mu_{k_j}\}_{j\geq0}$ and $\{|\boldsymbol{\nabla}_{k_j}u_{k_j}|^{p-2}\boldsymbol{\nabla}_{k_j}u_{k_j}\}_{j\geq0}$ with limits $\mu\in\mathbb{R}$ and $\boldsymbol{\xi}\in L^{q}(\Om)^d$ such that
\begin{equation}\label{pf1_thm:conv1}
\mu_{k_j}\to\mu\quad \mbox{and}\quad |\boldsymbol{\nabla}_{k_j}u_{k_j}|^{p-2}\boldsymbol{\nabla}_{k_j}u_{k_j}\rightarrow \boldsymbol{\xi}\quad\mbox{weakly in}~L^{q}(\Omega)^d.
\end{equation}
Thus, the first convergence in \eqref{conv1_sol} follows.
Next, by Lemmas \ref{lem:weakconv} and \ref{lem:L^p-strongconv}, there exist a subsequence, still denoted by $\{u_{k_j}\}_{j\geq0}$, and $w\in W^{1,p}_0(\Om)$ such that
\begin{equation}\label{pf2_thm:conv1}
               u_{k_j}\rightarrow w\quad\mbox{strongly in}~L^p(\Omega),\quad
            \boldsymbol{\nabla}_{k_j}u_{k_j}\rightarrow \boldsymbol{\nabla}w\quad\mbox{weakly in}~L^p(\Omega)^d,
\end{equation}
i.e., the second convergence in \eqref{conv1_sol} holds.
Now \eqref{pf1_thm:conv1}, the $L^p$ strong convergence in \eqref{pf2_thm:conv1} and Lemma \ref{lem:diff_func} lead to
\[
    \lim_{j\to\infty}\langle\mathcal{R}(u_{k_j},\mu_{k_j}),v\rangle=
    \int_{\Om}\boldsymbol{\xi}\cdot\boldsymbol{\nabla}v\dx-\mu\int_{\Om}|w|^{p-2}wv\dx,\quad\forall v\in V.
\]
Condition \eqref{condition 1} and Lemma \ref{lem:residue} imply $\lim_{j\to\infty}\langle\mathcal{R}(u_{k_j},\mu_{k_j}),v\rangle=0$. Consequently, 
\begin{equation}\label{pf3_thm:conv1}
    \int_{\Om}\boldsymbol{\xi}\cdot\boldsymbol{\nabla}v\dx=\mu\int_{\Om}|w|^{p-2}wv\dx,\quad\forall v\in V.
\end{equation}
Since $\|u_{k_j}\|_{L^p(\Om)}=1$, together with the strong convergence in \eqref{pf2_thm:conv1}, there holds $\|w\|_{L^p(\Om)}=1$. Taking $v=w$ in \eqref{pf3_thm:conv1} gives
\begin{equation}\label{pf4_thm:conv1}
\int_{\Om}\boldsymbol{\xi}\cdot\boldsymbol{\nabla}w\dx=\mu.
\end{equation}
Note that $|\nabla w|^{p-2}\nabla w\in L^q(\Omega)^d$ due to $w\in W_0^{1,p}(\Omega)$, by the weak convergence in \eqref{pf2_thm:conv1}, we derive
\begin{align*}
    \lim_{j\to\infty}\int_\Om|\boldsymbol{\nabla}w|^{p-2}\boldsymbol{\nabla}w\cdot (\boldsymbol{\nabla}_{k_j}u_{k_j}-\boldsymbol{\nabla}w) \dx = 0.
\end{align*}
Moreover, by testing \eqref{vp_disc-adaptive} with $u_{k_j}\in V_{k_j}^{\op{CR}}$, the normalization $\|u_{k_j}\|_{L^p(\Om)}=1$ and \eqref{pf1_thm:conv1}, we obtain
\begin{align*}
\lim_{j\to\infty}\int_{\Om}|\boldsymbol{\nabla}_{k_j}u_{k_j}|^{p}\dx=\lim_{j\to\infty}\mu_{k_j}=\mu.
\end{align*}
Combining with the weak convergence in \eqref{pf1_thm:conv1} and \eqref{pf4_thm:conv1}, we derive
\begin{align*}
    \lim_{j\to\infty}\int_{\Om}|\boldsymbol{\nabla}_{k_j}u_{k_j}|^{p}\dx-\int_\Om|\boldsymbol{\nabla}_{k_j}u_{k_j}|^{p-2}\boldsymbol{\nabla}_{k_j}u_{k_j}\cdot\boldsymbol{\nabla}w \dx = \mu-\int_{\Om}\boldsymbol{\xi}\cdot\boldsymbol{\nabla}w\dx=0.
\end{align*}
Now by H\"{o}lder's inequality, we derive
\[
    \begin{aligned}
        &\quad\int_{\Om}(|\boldsymbol{\nabla}_{k_j}u_{k_j}|^{p-2}\boldsymbol{\nabla}_{k_j}u_{k_j}-|\boldsymbol{\nabla}w|^{p-2}\boldsymbol{\nabla}w)\cdot(\boldsymbol{\nabla}_{k_j}u_{k_j}-\boldsymbol{\nabla}w)\dx\\
        &=\int_{\Om}\left(|\boldsymbol{\nabla}_{k_j}u_{k_j}|^p+|\boldsymbol{\nabla}w|^p-|\boldsymbol{\nabla}_{k_j}u_{k_j}|^{p-2}\boldsymbol{\nabla}_{k_j}u_{k_j}\cdot\boldsymbol{\nabla}w-|\boldsymbol{\nabla}w|^{p-2}\boldsymbol{\nabla}w\cdot\boldsymbol{\nabla}_{k_j}u_{k_j}\right) \dx\\
        &\geq
        \int_{\Om}|\boldsymbol{\nabla}_{k_j}u_{k_j}|^{p} \dx + \int_{\Om}|\boldsymbol{\nabla}w|^{p} \dx-\left(\int_{\Om}|\boldsymbol{\nabla}_{k_j}u_{k_j}|^{p} \dx\right)^{\frac{p-1}{p}}\left(\int_{\Om}|\boldsymbol{\nabla}w|^p \dx\right)^{\frac{1}{p}}\\
        &\qquad-\left(\int_{\Om}|\boldsymbol{\nabla}w|^{p} \dx\right)^{\frac{p-1}{p}}\left(\int_{\Om}|\boldsymbol{\nabla}_{k_j}u_{k_j}|^p \dx\right)^{\frac{1}{p}}\\
        &=\left(\|u_{k_j}\|_{1,p,k_j}^{p-1}-\|\boldsymbol{\nabla}w\|_{L^p(\Om)}^{p-1}\right)\left(\|u_{k_j}\|_{1,p,k_j}-\|\boldsymbol{\nabla}w\|_{L^p(\Om)}\right)\geq 0.
    \end{aligned}
\]
This and the two vanishing limits yield
$\|u_{k_j}\|_{1,p,k_j}\to\|\boldsymbol{\nabla}w\|_{L^p(\Om)}$ as $j\to\infty$.
Together with the weak convergence in \eqref{pf2_thm:conv1} and the uniform convexity of $L^p(\Om)^d$, this gives the third result in \eqref{conv1_sol}.
\end{proof}

\begin{remark}\label{rem:conv}
    The proof of Theorem \ref{thm:strong-conv} indicates that for any subsequence $\{(\mu_{k_j},u_{k_j})\}_{j\geq0}$ of $\{(\mu_k,u_k)\}_{k\geq0}$, we can get another subsequence $\{(\mu_{k_{j_m}},u_{k_{j_m}})\}_{m\geq0}$ and some pair $(\widetilde{\mu},\widetilde{w})\in\mathbb{R}\times V$ such that $\|u_{k_{j_{m}}}-\widetilde{w}\|_{L^p(\Om)}\to 0$, $\|u_{k_{j_m}}-\widetilde{w}\|_{1,p,k_{j_m}}\to 0$ and $\mu_{k_{j_m}}\to \widetilde{\mu}$.
\end{remark}

\begin{theorem}\label{thm:conv2}
Let the sequence $\{\mathcal{T}_{k}\}_{k\geq 0}\subset\mathbb{T}$ be nested, and let $\{(\mu_k,u_k)\}_{k\geq 0}$ be the associated sequence of discrete solutions to problem \eqref{min_disc}.
Suppose that the initial mesh $\mathcal{T}_0$ is sufficiently fine and condition \eqref{condition 1} is satisfied. Then there holds
     \begin{align}\label{conv_eigen}
        \lim_{k\to\infty}\mu_{k}=\lambda_1\quad \mbox{and}\quad\lim_{k\to\infty}\inf_{v\in E_{\lambda_1}}\|u_k-v\|_{1,p,k}=0.
     \end{align}
\end{theorem}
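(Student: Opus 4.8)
The plan is to combine Theorem~\ref{thm:strong-conv} (which is already available under condition \eqref{condition 1}) with a subsequence argument to first identify the limit of the full sequence $\{\mu_k\}_{k\geq0}$, and then transfer the convergence to the eigenfunctions. First I would apply Theorem~\ref{thm:strong-conv} to extract a subsequence $\{(\mu_{k_j},u_{k_j})\}_{j\geq0}$ converging to a pair $(\mu,w)\in\mathbb{R}\times V$ in the three senses of \eqref{conv1_sol}. Passing to the limit in \eqref{vp_disc-adaptive} tested against $I_{k_j}v$ (as already done inside the proof of Theorem~\ref{thm:strong-conv} via the residual functional $\mathcal{R}$), one sees that $(\mu,w)$ is a weak eigenpair of \eqref{vp_eigen} with $\|w\|_{L^p(\Omega)}=1$, hence $\mu\geq\lambda_1$ by the variational characterization \eqref{min_cont}. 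For the reverse inequality I would use the lower bound of Theorem~\ref{thm:GLB}: since $\mathcal{T}_0$ is sufficiently fine and the meshes are nested, $\max_{T\in\mathcal{T}_k}\mathrm{diam}(T)$ is controlled, and \eqref{GLB} together with the uniform bound \eqref{disc_bd} on $\mu_k$ forces $\liminf_k \mu_k \geq$ something... actually \eqref{GLB} gives $\mu_k^{1/p}\leq\lambda_1^{1/p}\bigl(1+(1+d/2)\max_T\mathrm{diam}(T)\mu_k^{1/p}\bigr)$, so rearranging and using that $\max_T\mathrm{diam}(T)\to$ is small, one gets $\limsup_k\mu_k\leq\lambda_1$. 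Combining, every convergent subsequence of $\{\mu_k\}$ has limit exactly $\lambda_1$; since $\{\mu_k\}$ is bounded, a standard subsequence-of-subsequence argument (cf. Remark~\ref{rem:conv}) yields $\lim_{k\to\infty}\mu_k=\lambda_1$ for the whole sequence, and correspondingly the strong limits $w$ of any convergent subsequence of $\{u_k\}$ all lie in $E_{\lambda_1}$.

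Next I would establish the second assertion in \eqref{conv_eigen} by contradiction. Suppose $\limsup_{k\to\infty}\inf_{v\in E_{\lambda_1}}\|u_k-v\|_{1,p,k}=\delta>0$; then there is a subsequence along which this distance stays $\geq\delta/2$. By Remark~\ref{rem:conv} (applied to that subsequence), we can extract a further subsequence $\{(\mu_{k_{j_m}},u_{k_{j_m}})\}_{m\geq0}$ with $\mu_{k_{j_m}}\to\widetilde\mu$, $\|u_{k_{j_m}}-\widetilde w\|_{L^p(\Omega)}\to0$ and $\|u_{k_{j_m}}-\widetilde w\|_{1,p,k_{j_m}}\to0$ for some $(\widetilde\mu,\widetilde w)\in\mathbb{R}\times V$. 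By the first part of the proof $\widetilde\mu=\lambda_1$ and $\widetilde w\in E_{\lambda_1}$. But then $\inf_{v\in E_{\lambda_1}}\|u_{k_{j_m}}-v\|_{1,p,k_{j_m}}\leq\|u_{k_{j_m}}-\widetilde w\|_{1,p,k_{j_m}}\to0$, contradicting that this quantity is $\geq\delta/2$ along the subsequence. Hence $\delta=0$ and the second limit in \eqref{conv_eigen} holds.

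One technical point to handle carefully is the role of ``$\mathcal{T}_0$ sufficiently fine.'' I would make precise that it means $\max_{T\in\mathcal{T}_0}\mathrm{diam}(T)$ is small enough that $(1+d/2)\max_{T\in\mathcal{T}_0}\mathrm{diam}(T)\,\lambda_1^{1/p}<1$, so that the hypothesis of Theorem~\ref{thm:GLB} is met for $\mathcal{T}_0$ and hence, by nestedness, for every $\mathcal{T}_k$ (mesh sizes only decrease). Moreover, in deriving $\limsup_k\mu_k\leq\lambda_1$ from \eqref{GLB} one needs to be slightly delicate: \eqref{GLB} rearranges to $\mu_k^{1/p}\bigl(1-(1+d/2)\max_T\mathrm{diam}(T)\lambda_1^{1/p}\bigr)\leq\lambda_1^{1/p}$, so $\mu_k^{1/p}\leq\lambda_1^{1/p}/\bigl(1-(1+d/2)\max_{T\in\mathcal{T}_0}\mathrm{diam}(T)\lambda_1^{1/p}\bigr)$; this only shows a uniform upper bound slightly above $\lambda_1$, not $\limsup\leq\lambda_1$ directly. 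The clean way is instead: since $(\mu,w)$ from the convergent subsequence is a genuine eigenpair with $\|w\|_{L^p}=1$, we have $\mu=\mathcal{J}(w)\geq\lambda_1$; and for the upper bound use the conforming comparison \eqref{disc_bd} with $\widetilde u_k$, together with the known convergence $\mathcal{J}_k(\widetilde u_k)\to\lambda_1$ from the conforming adaptive theory (or directly: $\mathcal{J}_k(\widetilde u_k)\leq\mathcal{J}_k(I_ku)$-type estimates plus \eqref{est_approx} give $\limsup_k\mathcal{J}_k(\widetilde u_k)\leq\lambda_1$), hence $\limsup_k\mu_k\leq\lambda_1$. So $\mu=\lambda_1$. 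I expect the main obstacle to be exactly this step — cleanly showing $\limsup_k\mu_k\leq\lambda_1$ and that the subsequential limit $\mu$ cannot exceed $\lambda_1$ — since $\mu\geq\lambda_1$ is immediate from variational characterization but the matching upper bound must come from a conforming/interpolation comparison, and one must ensure this uses only $\mathcal{T}_0$ being fine (not mesh size $\to0$, which is \emph{not} assumed here).
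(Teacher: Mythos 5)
Your high-level structure (extract a convergent subsequence via Theorem~\ref{thm:strong-conv}, show its limit is the first eigenpair, then run a subsequence-of-subsequences argument for the full sequence, and close with a contradiction argument for the eigenfunction distance) matches the paper, and your Step~4 contradiction argument for the second limit in \eqref{conv_eigen} is essentially the paper's. However, the decisive step --- showing that the subsequential limit $\mu$ must equal $\lambda_1$ --- has a genuine gap that you yourself correctly flag as ``the main obstacle'' but do not actually close. You propose to show $\limsup_k\mu_k\leq\lambda_1$ via $\mu_k\leq\mathcal J_k(\widetilde u_k)$ plus either ``known convergence $\mathcal J_k(\widetilde u_k)\to\lambda_1$ from conforming adaptive theory'' or ``$\mathcal J_k(\widetilde u_k)\leq\mathcal J_k(I_k u)$-type estimates plus \eqref{est_approx}.'' Neither works here. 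The conforming adaptive theory (e.g.\ \cite{MR4860202}) applies to meshes generated by \emph{that} algorithm's marking, not to the $\mathcal{T}_k$ generated by the present nonconforming Algorithm~\ref{anfem}; there is no reason the auxiliary conforming quantity $\mathcal J_k(\widetilde u_k)$ converges to $\lambda_1$ along these meshes. The interpolation route $\mathcal J_k(\widetilde u_k)\leq\mathcal J(v_k)$ with $v_k\to u\in E_{\lambda_1}$ requires density of $\bigcup_k V_k$ near $u$ in $W^{1,p}$, which in turn requires $\max_T h_T\to0$ --- exactly what you correctly note is \emph{not} assumed. So the asserted $\limsup_k\mu_k\leq\lambda_1$ is unproved in your write-up.

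The paper's proof avoids this entirely by never establishing $\limsup_k\mu_k\leq\lambda_1$. Instead it uses the isolation of $\lambda_1$: since $\lambda_1$ is simple and isolated, $\lambda_1<\inf_{v\in E\setminus E_{\lambda_1}}\mathcal J(v)$ (Step~2 of the paper). Then ``$\mathcal{T}_0$ sufficiently fine'' is given the precise meaning that $\mathcal J_0(\widetilde u_0)<\inf_{v\in E\setminus E_{\lambda_1}}\mathcal J(v)$, and by nestedness plus $V_k\subsetneq V_k^{\op{CR}}$ one has $\mu_k\leq\mathcal J_k(\widetilde u_k)\leq\mathcal J_0(\widetilde u_0)$ for all $k$. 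Thus the subsequential limit $\mu=\mathcal J(w)$ (with $w$ an eigenfunction, $\|w\|_{L^p}=1$) satisfies $\mu<\inf_{v\in E\setminus E_{\lambda_1}}\mathcal J(v)$, forcing $w\in E_{\lambda_1}$ and $\mu=\lambda_1$ --- without ever showing $\mu_k$ converges to $\lambda_1$ from below or above in the limit-superior sense. Your interpretation of ``sufficiently fine'' as a smallness condition on $\max_T\mathrm{diam}(T)$ relative to Theorem~\ref{thm:GLB} is also not the right one; the relevant fineness condition is the spectral-gap inequality $\mathcal J_0(\widetilde u_0)<\inf_{v\in E\setminus E_{\lambda_1}}\mathcal J(v)$. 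To repair your proof you should replace the attempted $\limsup\leq\lambda_1$ argument with this isolation-plus-monotone-comparison argument.
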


\begin{proof}
The proof is divided into four steps.

\noindent\textbf{Step 1.} Let $(\mu,w)\in\mathbb{R}\times V$ be defined in \eqref{conv1_sol}. Then Lemmas \ref{lem:residue} and \ref{lem:diff_func} and Theorem \ref{thm:strong-conv} imply
\begin{equation}\label{pf1_thm:conv2}
    \int_{\Om}|\bold{\nabla}w|^{p-2}\bold{\nabla}w\cdot\bold{\nabla}v\dx
    =\mu\int_{\Om}|w|^{p-2}wv\dx\quad\forall~v\in V,
\end{equation}
i.e. $(\mu,w)$ is an eigenpair of problem \eqref{vp_eigen} with $\|w\|_{L^p(\Omega)}=1$.

\noindent\textbf{Step 2.} Let $E:=\{v\in V|~v~\mbox{satisfies \eqref{vp_eigen} with}~\|v\|_{L^p(\Omega)}=1~\text{for some}~\lambda\in \mathbb{R}\}$, which consists of all eigenfunctions of problem \eqref{vp_eigen}. Since problem \eqref{vp_eigen} has a nondecreasing sequence of positive eigenvalues \cite{MR912211,MR2196811,MR1007505}, there holds $E_{\lambda_1}\subsetneq E$. Since any $u\in E_{\lambda_1}$ is also a minimizer of $\mathcal{J}$ over $W_0^{1,p}(\Om)$, $\lambda_1=\mathcal{J}(u)\leq\inf_{v\in E\setminus
    E_{\lambda_1}}\mathcal{J}(v)$. Suppose that the equality is attained. Let
    $\{w_n\}_{n\geq0}\subset E\setminus E_{\lambda_1}$ be a minimizing sequence such that
    $\mathcal{J}(w_n)\to\inf_{v\in E\setminus
    E_{\lambda_1}}\mathcal{J}(v)=\lambda_1$. Then $\mathcal{J}(w_n)$ is also a sequence of eigenvalues other than $\lambda_1$. This contradicts the fact that $\lambda_1$ is isolated \cite{MR1007505}.  Hence $\lambda_1=\mathcal{J}(u)<\inf_{v\in E\setminus
    E_{\lambda_1}}\mathcal{J}(v)$ for any $u\in E_{\lambda_1}$.

\noindent\textbf{Step 3.} Recall that $V_{k}$ is the $W^{1,p}_{0}(\Om)$-conforming linear FE space over $\mathcal{T}_k$, and 
that $\widetilde{u}_k\in V_k$ is the minimizer defined in \eqref{min_dis_con}. Let $V_\infty:=\overline{\bigcup_{k\geq 0}V_{k}}$ 
in the $W^{1,p}$-norm. If $\{\mathcal{T}_k\}_{k\geq 0}$ is a sequence of uniformly refined meshes, then for any
    $u\in E_{\lambda_1}$ there exists a sequence $\{v_l\}_{l\geq0}\subset V_\infty$ such that $v_l\to u$ strongly in $W_0^{1,p}(\Om)$. Note that $\mathcal{J}$ is continuous over
    $W_0^{1,p}(\Om)$ and $\mathcal{J}(u)<\inf_{v\in E\setminus
    E_{\lambda_1}}\mathcal{J}(v)$ as proved in Step 2, we derive $\mathcal{J}(v_l)<\inf_{v\in E\setminus
    E_{\lambda_1}}\mathcal{J}(v)$ for sufficiently large $l$ or a sufficiently small mesh size of $\mathcal{T}_l$. With \eqref{min_dis_con} and the inclusive property $V_{k}\subsetneq V_{k}^{\op{CR}}$ invoked, a fine enough initial $\mathcal{T}_0$, in the sequence of $\{\mathcal{T}_k\}_{k\geq 0}$, guarantees that $\mu_k=\mathcal{J}_k(u_k)\leq\mathcal{J}_{k}(\widetilde{u}_k)\leq\mathcal{J}_0(\widetilde{u}_0)<\inf_{v\in E\setminus
    E_{\lambda_1}}\mathcal{J}(v)$. Taking $v=w$ in \eqref{pf1_thm:conv2} and noting $\mu_{{k}_j}\to\mu$ in \eqref{conv1_sol} as well as $\|w\|_{L^p(\Omega)}=1$, we get $\mathcal{J}(w)=\mu<\inf_{v\in E\setminus
    E_{\lambda_1}}\mathcal{J}(v)$, which, along with \eqref{pf1_thm:conv2} and $\|w\|_{L^p(\Omega)}=1$ again, implies $w\in E_{\lambda_1}$ and $\mu=\lambda_1$. Moreover, this argument shows that $\widetilde{\mu}=\lambda_1$ for any convergent subsequence of $\{(\mu_k,u_k)\}_{k\geq0}$ with a limit pair $(\widetilde{\mu},\widetilde{w})$. So the limit $\lambda_1$ is independent of a particular subsequence. This leads to the first result in \eqref{conv_eigen}.

\noindent\textbf{Step 4.} To prove the second result in \eqref{conv_eigen}, we also argue by contradiction. If the result is false, then there exist a number $\varepsilon>0$ and a subsequence $\{u_{k_j}\}_{j\geq0}$ of $\{u_k\}_{k\geq0}$ such that $\inf_{v\in E_{\lambda_1}}\|u_{k_j}-v\|_{1,p,k_j}\geq\varepsilon$ for all $k_j$. By Remark \ref{rem:conv}, we can extract a subsequence of discrete eigenfunctions $\{u_{k_{j_m}}\}_{m\geq0}$ converging to some $\widetilde{w}\in W_0^{1,p}(\Om)$ and a subsequence of corresponding eigenvalues $\{\mu_{k_{j_m}}\}_{m\geq0}$ to some $\widetilde{\mu}\in\mathbb{R}$. By repeating the argument in Steps 1-3, we deduce that $(\widetilde{\mu},\widetilde{w})$ satisfies \eqref{pf1_thm:conv2} with $\widetilde{\mu}=\lambda_1$ and $\|\widetilde{w}\|_{L^p(\Omega)}=1$, i.e., $\widetilde{w}\in E_{\lambda_1}$. This contradicts the assumption that $\{u_{k_j}\}_{j\geq0}$ does not converge to any eigenfunction in $E_{\lambda_{1}}$.
\end{proof}

\section{Adaptive algorithm}\label{sec:afem}
Theorem \ref{thm:conv2} implies that condition \eqref{condition 1} on the computable quantities (and a sufficiently fine initial mesh $\mathcal{T}_0$) ensures the convergence of relevant C-R FE approximations. Motivated by this observation, we use
the computable quantity to design an adaptive algorithm to approximate problem \eqref{min_cont} even if it is not a reliable bound of the error. To this end, we first introduce some notation. For any $(\nu, v)\in\mathbb{R}_{>0}\times V_k^{\op{CR}}$ and any $T\in\mathcal{T}_k$, we define two local error indicators
\[
    \eta_{k,1}(\nu,v,T):=\nu^q h_T^{q}\| v\|_{L^{p}(T)}^{p},\quad \eta_{k,2}(v,T):=\sum_{F\subset\partial T\cap\Omega}\frac{1}{2}h_{F}\|[v]\|_{L^{p}(F)}^{p}+\sum_{F\subset\partial T\cap\partial\Omega}h_{F}\|[v]\|_{L^{p}(F)}^{p},
\]
and two global error estimators 
\begin{align*} 
\eta_{k,1}(\nu,v,\mathcal{M}):=\sum_{T\in\mathcal{M}}\eta_{k,1}(\nu,v,T)\quad \mbox{and}\quad \eta_{k,2}(v,\mathcal{M}):=\sum_{T\in\mathcal{M}}\eta_{k,2}(v,T),
\end{align*}
for any $\mathcal{M}\subseteq\mathcal{T}_k$. If $\mathcal{M}=\mathcal{T}_k$, then it is dropped. 

\begin{algorithm}
\caption{Adaptive Crouzeix-Raviart FEM for the first eigenvalue of $p$-Laplacian}\label{anfem}
\begin{algorithmic}[1]
    \State {(INITIALIZE)} Specify an initial conforming mesh $\mathcal{T}_0$ and set $k:=0$.

    \State {(SOLVE)} Solve Problem \eqref{min_disc} on $\mathcal{T}_k$ for $(\mu_k,u_{k})\in \mathbb{R} \times V_{k}^{\op{CR}}$ s.t. $\|u_k\|_{L^p(\Omega)} = 1$.

    \State {(ESTIMATE)} Compute two error estimators $\eta_{k,1}(\mu_k,u_k)$ and $\eta_{k,2}(u_k)$.

    \State {(MARK)} Mark two subsets $\mathcal{M}_k^{i}\subseteq\mathcal{T}_k$ ($i=1,2$), each containing at least one element $T_k^{i}$ holding the largest error indicator {among all $\eta_{k,1}(\mu_k,u_k,T)$ or $\eta_{k,2}(u_k,T)$}, i.e.,
    \begin{equation}\label{marking}
   \eta_{k,1}(\mu_k,u_k,T_k^1)=\max_{T\in\mathcal{T}_k}\eta_{k,1}(\mu_k,u_k,T)\quad \mbox{and} \quad \eta_{k,2}(u_k,T_k^2)=\max_{T\in\mathcal{T}_k}\eta_{k,2}(u_k,T).
\end{equation}
                Then $\mathcal{M}_k:=\mathcal{M}_k^1\cup \mathcal{M}_k^2$.

    \State {(REFINE)} Refine each $T\in\mathcal{M}_k$ by bisection to get $\mathcal{T}_{k+1}$.

    \State Set $k:=k+1$ and go to Step 2.
    \end{algorithmic}
\end{algorithm}

\begin{comment}
\begin{alg}\label{anfem} Given an initial conforming mesh $\mathcal{T}_0$. Set $k:=0$.
    \begin{enumerate}
        \item\emph{(SOLVE)} Solve Problem \eqref{min_disc} on $\mathcal{T}_k$ for $(\mu_k,u_{k})\in \mathbb{R} \times V_{k}^{\op{CR}}$ s.t. $\|u_k\|_{L^p(\Omega)} = 1$.
        \item\emph{(ESTIMATE)} Compute two error estimators $\eta_{k,1}(\mu_k,u_k)$ and $\eta_{k,2}(u_k)$.
        \item\emph{(MARK)} Mark two subsets $\mathcal{M}_k^{i}\subseteq\mathcal{T}_k$ ($i=1,2$), each containing at least one element $T_k^{i}$ holding the largest error indicator, i.e.,
                \begin{equation}\label{marking}
                    %\widetilde{T}\in\mathcal{M}_k:~
                    \eta_{k,1}(\mu_k,u_k,T_k^1)=\max_{T\in\mathcal{T}_k}
                    \eta_{k,1}(\mu_k,u_k,T), \quad \eta_{k,2}(u_k,T_k^2)=\max_{T\in\mathcal{T}_k}
                    \eta_{k,2}(u_k,T).
                \end{equation}
                Then $\mathcal{M}_k:=\mathcal{M}_k^1\cup \mathcal{M}_k^2$.
        \item\emph{(REFINE)} Refine each $T\in\mathcal{M}_k$ by bisection to get $\mathcal{T}_{k+1}$.
        \item Set $k:=k+1$ and go to Step 1.
    \end{enumerate}
\end{alg}
\end{comment}

The MARK module consists of two separate markings based on $\eta_{k,1}(\mu_k,u_k)$ and $\eta_{k,2}(u_k)$. The former measures the residual of equation \eqref{diff_eq} (with $(\lambda,u)$ replaced by $(\mu_k,u_k)$) on each element of $\mathcal{T}_k$, while the latter measures the discontinuity of $u_k$. Note that several commonly used marking strategies, e.g., maximum strategy, equi-distribution strategy, modified equi-distribution strategy and D\"{o}rfler's strategy, meet the requirement \eqref{marking} in the MARK module. The bisection procedure in the REFINE module follows rules in \cite{MR1329875,MR2648380,MR2353951}. Algorithm \ref{anfem} generates a sequence $\{\mathcal{T}_k\}_{k\geq0}\subset\mathbb{T}$. Let
\[
    \mathcal{T}_{k}^{+}:=\bigcap_{l\geq k}\mathcal{T}_{l},\quad
    \mathcal{T}_{k}^{0}:=\mathcal{T}_{k}\setminus\mathcal{T}_{k}^{+},\quad
    \Omega_{k}^{+}:=\bigcup_{T\in\mathcal{T}^{+}_{k}}D_{T},\quad
    \Omega_{k}^{0}:=\bigcup_{T\in\mathcal{T}^{0}_{k}}D_{T}.
\]
That is, $\mathcal{T}_{k}^{+}$ consists of all elements that are not refined after the $k$-th loop while
all elements in $\mathcal{T}_{k}^{0}$ are refined at least once after the $k$-th iteration. Clearly, $\mathcal{T}_{l}^{+}\subset\mathcal{T}_{k}^{+}$ for $l<k$ and $\mathcal{M}_k\subset\T_{k}^0$. We also define a mesh-size function
$h_{k}:\overline{\Omega}\rightarrow\mathbb{R}^{+}$ almost everywhere by $h_{k}(x)=h_{T}$ for $x$ in the interior of an element $T\in\mathcal{T}_{k}$ and
$h_{k}(x)=h_{F}$ for $x$ in the relative interior of a face $F\in\mathcal{F}_{k}$.
With $\chi^{0}_{k}$ being the characteristic function of $\Omega_{k}^{0}$, the mesh-size function satisfies \cite{MR2832786}:
\begin{equation}\label{con_0_mz}
    \lim_{k\rightarrow\infty}\|h_{k}\chi^{0}_{k}\|_{L^\infty(\Omega)}=0.
\end{equation}

The next lemma gives the vanishing limit of the error estimators.
\begin{lemma}\label{lem:max_est}
    Let $\{(\mu_k,u_k)\}_{k\geq 0}$ be the sequence of discrete eigenpairs and $\{\mathcal{M}_k\}_{k\geq0}$ the sequence of marked sets generated by Algorithm \ref{anfem}. Then there holds
    \begin{align}\label{con_0_max}
\lim_{k\to\infty}\max_{T\in\mathcal{M}_k}\eta_{k,1}(\mu_k,u_k,T)=\lim_{k\to\infty}\max_{T\in\mathcal{M}_k}\eta_{k,2}(u_k,T) = 0.
    \end{align}
\end{lemma}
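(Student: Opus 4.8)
The plan is to estimate each local indicator on a marked element $T\in\mathcal{M}_k$ by a fixed power of $\|h_k\chi_k^0\|_{L^\infty(\Omega)}$ times a uniformly bounded quantity, and then to invoke \eqref{con_0_mz}. Three facts are used throughout: first, $\mathcal{M}_k\subset\mathcal{T}_k^0$, so that for every $T\in\mathcal{M}_k$ the patch $D_T$ lies in $\Omega_k^0$, whence $h_{T'}\le\|h_k\chi_k^0\|_{L^\infty(\Omega)}$ for all $T'\in D_T$ (and in particular for $T$ itself); second, $\mu_k=\|u_k\|_{1,p,k}^p\le C$ by \eqref{disc_bd}; and third, $\|u_k\|_{L^p(\Omega)}=1$ by the normalization in Algorithm \ref{anfem}.

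For the residual indicator the bound is immediate: since $\|u_k\|_{L^p(T)}\le\|u_k\|_{L^p(\Omega)}=1$ and $\mu_k^q\le C$, one has $\eta_{k,1}(\mu_k,u_k,T)=\mu_k^q h_T^q\|u_k\|_{L^p(T)}^p\le C\,h_T^q$ for every $T$, so $\max_{T\in\mathcal{M}_k}\eta_{k,1}(\mu_k,u_k,T)\le C\|h_k\chi_k^0\|_{L^\infty(\Omega)}^q\to0$. For the jump indicator the key ingredient is the local estimate
\[
 h_F\|[v]\|_{L^p(F)}^p\le C\sum_{\substack{T'\in\mathcal{T}_k\\ F\subset\partial T'}}h_{T'}^p\,\|\boldsymbol{\nabla}_k v\|_{L^p(T')}^p,\qquad\forall\,v\in V_k^{\op{CR}},\ \forall\,F\in\mathcal{F}_k.
\]
To prove it I would exploit that $[v]|_F$ is affine on the simplex $F$ and has vanishing mean there --- equivalently, it vanishes at the barycenter $m_F$ of $F$, because a C-R function is single-valued at $m_F$ and $\boldsymbol{n}_{T_1}|_F=-\boldsymbol{n}_{T_2}|_F$ on an interior face, while for a boundary face $v(m_F)=0$. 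A scaled Poincar\'e--Wirtinger inequality on $F$ then gives $\|[v]\|_{L^p(F)}\le C\,h_F\|\nabla_F[v]\|_{L^p(F)}$, where $\nabla_F$ denotes the surface gradient; bounding the tangential gradient of each affine piece by the constant element gradient, $|\nabla_F(v|_{T'}|_F)|\le|\boldsymbol{\nabla}v|_{T'}|$, and using shape regularity in the form $|F|\,h_F\approx h_{T'}^d$ for $F\subset\partial T'$, one arrives at the displayed inequality.

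Summing this estimate over the faces of $T$ and noting that the face-neighbours of $T$ belong to $D_T$, one obtains $\eta_{k,2}(u_k,T)\le C\sum_{T'\in D_T}h_{T'}^p\|\boldsymbol{\nabla}_k u_k\|_{L^p(T')}^p$. For $T\in\mathcal{M}_k$ the first fact above bounds the right-hand side by $C\|h_k\chi_k^0\|_{L^\infty(\Omega)}^p\|\boldsymbol{\nabla}_k u_k\|_{L^p(\Omega)}^p=C\|h_k\chi_k^0\|_{L^\infty(\Omega)}^p\mu_k\le C\|h_k\chi_k^0\|_{L^\infty(\Omega)}^p$, so $\max_{T\in\mathcal{M}_k}\eta_{k,2}(u_k,T)\to0$ by \eqref{con_0_mz}; combined with the bound for $\eta_{k,1}$ this yields \eqref{con_0_max}. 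I expect the main obstacle to be precisely the local jump estimate: it is the only place where the nonconformity must be converted into an actual gain of a power of the mesh size, which forces one to use the defining zero-mean (barycenter) property of C-R functions and to track the scaling constants coming from shape regularity; the remaining steps are routine once \eqref{disc_bd} and \eqref{con_0_mz} are available.
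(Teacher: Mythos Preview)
Your proposal is correct and follows essentially the same approach as the paper's proof. Both treat $\eta_{k,1}$ by the trivial bound $\mu_k^q h_T^q\|u_k\|_{L^p(T)}^p\le C h_T^q$, and for $\eta_{k,2}$ both exploit that $[u_k]|_F$ is affine with zero mean on $F$ (vanishing at the barycenter) to gain a factor $h_F$, convert the surface tangential gradient into piecewise constant element gradients, and invoke shape regularity to arrive at $\eta_{k,2}(u_k,T)\le C h_T^{p}\|\boldsymbol{\nabla}_k u_k\|_{L^p(D_T)}^p$; the paper phrases this step via ``scaled trace theorem'' and the jump $[\boldsymbol{\nabla}_k u_k\times\boldsymbol{n}_F]$, which is exactly your Poincar\'e--Wirtinger argument on $F$ in different language. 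The only cosmetic differences are that the paper works directly with the single maximal element $T_k^i\in\mathcal{M}_k$ and pulls out $h_{T_k^2}^p$ (using local quasi-uniformity implicitly), whereas you bound every $T\in\mathcal{M}_k$ and use that the whole patch $D_T\subset\Omega_k^0$ to control each $h_{T'}$; both routes conclude via \eqref{con_0_mz}.
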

\begin{proof}
In Algorithm \ref{anfem}, let $T_k^{i}\in\mathcal{M}_k$ ($i=1,2$) be the element with the largest error indicator $\eta_{k,1}(\mu_k, u_k, T_k^1)$ and $\eta_{k,2}(u_k, T_k^2)$ over $\mathcal{M}_k$, respectively. The bound \eqref{disc_bd} and $\|u_k\|_{L^p(\Omega)}=1$ give
\begin{equation}\label{pf1_max_est}
\eta_{k,1}(\mu_k,u_k,T_k^1)=h_{T^1_k}^{q}\mu_{k}^{q}\|u_k\|^{p}_{L^{p}(T^1_k)}
            \leq
        C h_{T^1_k}^{q}\|u_k\|_{L^p(\Om)}^{p}= C h_{T^1_k}^{q}.
\end{equation}
{Note that} $[u_k]=0$ at the center of $F$ and  $\boldsymbol{\nabla}_{k}u_{k}$ is a piecewise
    constant vector over $\mathcal{T}_k$, then a combination of the scaled trace theorem, 
    local quasi-uniformity of $\mathcal{T}_k$ and \eqref{disc_bd}, leads to
    \begin{equation}\label{pf2_max_est}
        \begin{aligned}
            \eta_{k,2}(u_k,T_k^2)\leq&\sum_{F\subset\partial T^2_k}h_{F}\int_F|[u_k]|^{p}\ds\leq C\sum_{F\subset\partial T^2_k}h_F^{p+1}\int_{F}|[\boldsymbol{\nabla}_k u_k\times\boldsymbol{n}_{F}]|^{p}\ds\\
            \leq&C h_{T^2_k}^{p}\int_{D_{T_{k}^2}}|\boldsymbol{\nabla}_k u_k|^{p}\dx
        \leq Ch_{T^2_k}^{p}\|u_k\|_{1,p,k}^{p}\leq Ch_{T^2_k}^{p} \quad d=3.
        \end{aligned}
    \end{equation}
    For $d=2$, let $\bold{t}_F$ be the unit tangent vector given by rotating $\bold{n}_F$ $90^{\circ}$ counter-clockwise. With $[\boldsymbol{\nabla}_k u_k\cdot\bold{t}_{F}]$ replacing $[\boldsymbol{\nabla}_k u_k\times\boldsymbol{n}_{F}]$ in \eqref{pf2_max_est}, a similar argument yields the estimate for $d=2$. Since $T^1_k$ and $T^2_k\in\mathcal{M}_k\subset\mathcal{T}_k^0$, then \eqref{con_0_mz} implies 
    \begin{align}\label{pf3_max_est}
        h_{T^1_k}, h_{T^2_k}\leq \|h_k\chi^0_k\|_{L^{\infty}(\Om)}\to 0\quad \text{as}~k\to\infty.
    \end{align}
This, \eqref{pf1_max_est}, and \eqref{pf2_max_est} give the desired assertion.
\end{proof}

Now we state the main result of this section.
\begin{theorem}\label{thm:conv_est}
Let $\{(\mu_k,u_k)\}_{k\geq 0}$ be the sequence of discrete eigenpairs and $\{\mathcal{M}_k\}_{k\geq0}$ the sequence of marked sets generated by Algorithm \ref{anfem}. Then the sequence of estimators $\{\eta_{k,1}(\mu_k,u_k) + \eta_{k,2}(u_k)\}_{k\geq0}$ generated by Algorithm \ref{anfem} converges to zero, i.e.,
    \begin{equation}\label{conv1_est}
        \lim_{k\to\infty}\eta_{k,1}(\mu_k,u_k) + \eta_{k,2}(u_k)=0.
    \end{equation}
    Hence, condition \eqref{condition 1} holds.
\end{theorem}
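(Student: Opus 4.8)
The plan is to prove $\eta_{k,1}(\mu_k,u_k)\to0$ and $\eta_{k,2}(u_k)\to0$ separately, deliberately avoiding any appeal to the solution convergence of Theorem \ref{thm:strong-conv}, since that result takes \eqref{condition 1} (which is exactly \eqref{conv1_est}) as a hypothesis and using it here would be circular. The only inputs are the a priori bounds $\mu_k\le C$, $\|u_k\|_{1,p,k}\le C$ from \eqref{disc_bd}, the normalization $\|u_k\|_{L^p(\Omega)}=1$, the decay of the maximal indicator from Lemma \ref{lem:max_est}, and an elementary two-scale splitting of $\mathcal{T}_k$ by element size. First I would note that, because the marking rule \eqref{marking} always marks an element attaining the \emph{global} maximum of each indicator, one has $\max_{T\in\mathcal{T}_k}\eta_{k,1}(\mu_k,u_k,T)=\max_{T\in\mathcal{M}_k}\eta_{k,1}(\mu_k,u_k,T)$ and likewise for $\eta_{k,2}$; hence Lemma \ref{lem:max_est} already gives $\varepsilon_k:=\max_{T\in\mathcal{T}_k}\eta_{k,1}(\mu_k,u_k,T)+\max_{T\in\mathcal{T}_k}\eta_{k,2}(u_k,T)\to0$.

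Next, fix $\delta>0$ and split $\mathcal{T}_k=\mathcal{T}_k^{\mathrm{c}}(\delta)\cup\mathcal{T}_k^{\mathrm{f}}(\delta)$ with $\mathcal{T}_k^{\mathrm{c}}(\delta):=\{T\in\mathcal{T}_k:\ h_T\ge\delta\}$. Since the elements are pairwise disjoint with $|T|=h_T^d$, the coarse set obeys $\#\mathcal{T}_k^{\mathrm{c}}(\delta)\le|\Omega|\,\delta^{-d}$, a bound \emph{independent of $k$}; consequently the coarse contribution of either estimator is at most $(\#\mathcal{T}_k^{\mathrm{c}}(\delta))\,\varepsilon_k\le|\Omega|\,\delta^{-d}\varepsilon_k\to0$ as $k\to\infty$. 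On the fine part, for $\eta_{k,1}$ I would pull out the mesh size, use $\mu_k\le C$ and $\sum_{T}\|u_k\|_{L^p(T)}^p=\|u_k\|_{L^p(\Omega)}^p=1$ to get $\mu_k^q\sum_{T\in\mathcal{T}_k^{\mathrm{f}}(\delta)}h_T^q\|u_k\|_{L^p(T)}^p\le C\delta^q$.

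For the fine part of $\eta_{k,2}$ I would invoke the local bound $\eta_{k,2}(u_k,T)\le C h_T^p\|\boldsymbol{\nabla}_ku_k\|_{L^p(D_T)}^p$ that is already established inside the proof of Lemma \ref{lem:max_est} (scaled trace theorem, $[u_k]=0$ at face centres, $\boldsymbol{\nabla}_ku_k$ piecewise constant, local quasi-uniformity), and then the finite overlap of the patches $\{D_T\}_{T\in\mathcal{T}_k}$ together with $\|u_k\|_{1,p,k}\le C$ to obtain $\sum_{T\in\mathcal{T}_k^{\mathrm{f}}(\delta)}\eta_{k,2}(u_k,T)\le C\delta^p\|u_k\|_{1,p,k}^p\le C\delta^p$. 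Combining the coarse and fine estimates gives $\limsup_{k\to\infty}\big(\eta_{k,1}(\mu_k,u_k)+\eta_{k,2}(u_k)\big)\le C(\delta^q+\delta^p)$ for every $\delta>0$, so the left-hand side is $0$, which is \eqref{conv1_est}. Since $\eta_{k,1}(\mu_k,u_k)=\mu_k^q\sum_{T\in\mathcal{T}_k}h_T^q\|u_k\|_{L^p(T)}^p$ and $\eta_{k,2}(u_k)=\sum_{F\in\mathcal{F}_k}h_F\|[u_k]\|_{L^p(F)}^p$ (the factor $\tfrac12$ in $\eta_{k,2}(\cdot,T)$ compensating the double count over interior faces), \eqref{conv1_est} is precisely \eqref{condition 1}.

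The one point that really needs care is the circularity-free routing: one cannot pass through Theorem \ref{thm:strong-conv}, and a decomposition into never-refined elements $\mathcal{T}_k^+$ and eventually-refined elements $\mathcal{T}_k^0$ does \emph{not} suffice by itself, because \eqref{con_0_mz} kills only the $\mathcal{T}_k^0$ part while the number of elements in $\mathcal{T}_k^+$ is not controlled. The crux is instead that coarseness ($h_T\ge\delta$) alone forces a $k$-uniform bound on the element count, so the pointwise decay of the largest indicator (Lemma \ref{lem:max_est}) upgrades to decay of the sum over coarse elements, while the sum over fine elements is absorbed by the $L^p$-normalization and the uniform broken-norm bound \eqref{disc_bd}.
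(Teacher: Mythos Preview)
Your proof is correct and follows the same overall strategy as the paper --- split the mesh into a part with controlled cardinality (where Lemma \ref{lem:max_est} upgrades to a sum bound) and a part with small mesh size (where the local estimates $\eta_{k,1}(\mu_k,u_k,T)\le C h_T^q$ and $\eta_{k,2}(u_k,T)\le C h_T^p\|\boldsymbol{\nabla}_k u_k\|_{L^p(D_T)}^p$ give the desired power of the size) --- but the splitting criterion is different. The paper fixes $l<k$ and writes $\mathcal{T}_k=\mathcal{T}_l^+\cup(\mathcal{T}_k\setminus\mathcal{T}_l^+)$: on $\mathcal{T}_k\setminus\mathcal{T}_l^+$ the mesh size is controlled by $\|h_l\chi_l^0\|_{L^\infty(\Omega)}$ via \eqref{con_0_mz}, while on $\mathcal{T}_l^+$ the cardinality is fixed once $l$ is fixed, and the maximal indicator there is bounded by the global maximum, which tends to zero by Lemma \ref{lem:max_est}. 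Your size-threshold split $\mathcal{T}_k=\mathcal{T}_k^{\mathrm{c}}(\delta)\cup\mathcal{T}_k^{\mathrm{f}}(\delta)$ achieves the same two effects by pure volume counting ($\#\mathcal{T}_k^{\mathrm{c}}(\delta)\le|\Omega|\delta^{-d}$) and by the definition of the fine set, without invoking \eqref{con_0_mz} at all; this is slightly more elementary. Your closing remark about the naive $\mathcal{T}_k^+/\mathcal{T}_k^0$ split failing is accurate as stated, but note that the paper's actual device is the \emph{two-index} variant with $l$ fixed and $k\to\infty$, which does control $\#\mathcal{T}_l^+$ --- so the decomposition you dismiss is not quite the one the paper uses.
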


\begin{proof}
Let $\eta_{k}(\mu_k,u_k):=\eta_{k,1}(\mu_k,u_k) + \eta_{k,2}(u_k)$. Then it can be split into
    \begin{align}\label{pf1_conv_est}
        \eta_{k}(\mu_k,u_k)=\eta_{k}(\mu_k,u_k,\mathcal{T}_k\setminus\mathcal{T}_l^+)+
        \eta_{k}(\mu_k,u_k,\mathcal{T}_l^+)\quad  \text{with } k>l.
    \end{align}
    Similar to the proof of Lemma \ref{lem:max_est}, the upper bound  \eqref{disc_bd} and the normalization $\|u_k\|_{L^p(\Omega)}=1$ yield
        \begin{align*}
            \eta_{k}(\mu_k,u_k,\mathcal{T}_k\setminus\mathcal{T}_l^+)\leq
            &\sum_{T\in\mathcal{T}_k\setminus\mathcal{T}_l^+}\left(h_T^{q}\mu_k^q\| u_k\|_{L^{p}(T)}^{p}+\sum_{F\subset\p T}h_{F}\|[u_k]\|^p_{L^p(F)}\right)\\
            &\leq
            \sum_{T\in\mathcal{T}_k\setminus\T_l^+}h_{T}^{q}\mu_k^{q}\| u_k\|^p_{L^p(T)}+C\sum_{T\in\mathcal{T}_k\setminus\T_l^+}h_{T}^{p}\int_{D_T}|\bold{\nabla}_k u_k|^p\dx\\
            &\leq
            C\left(\max_{T\in\mathcal{T}_k\setminus\T_l^+}h_T^{q}\|u_k\|_{L^p(\Om)}^p+\max_{T\in\mathcal{T}_k\setminus\T_l^+}h_T^{p}\|u_k\|^{p}_{1,p,k}\right)\\
            &\leq C\max_{T\in\mathcal{T}_k\setminus\T_l^+}h_{T}^{\min(p,q)}.
        \end{align*}
Note that $\displaystyle{\max_{T\in\mathcal{T}_k\setminus\mathcal{T}_l^+}}h_T\leq\|h_l\chi_{l}^0\|_{L^\infty(\Om)}$ when $k>l$. Then the property \eqref{con_0_mz} implies 
     $\eta_k(\mu_k,u_k,\mathcal{T}_k\setminus\mathcal{T}_l^+)<\varepsilon/2$ for any given positive $\varepsilon$ when $l$ is sufficiently large. Note also that  $\mathcal{T}_l^+\subset\mathcal{T}_{k}^+\subset\mathcal{T}_k$ and $\mathcal{M}_k\cap\T_l^+=\emptyset$ for $k>l$. Then the marking requirement \eqref{marking} implies
    \begin{align*}
        \eta_{k}(\mu_k,u_k,\mathcal{T}_l^+) &\leq \#\mathcal{T}^+_l \max_{T\in\mathcal{T}^+_l}\eta_{k}(\mu_k,u_k,T)  \leq \#\mathcal{T}^+_l\max_{T\in\mathcal{M}_k}\eta_{k}(\mu_k,u_k,T) \\
        &\leq \# \mathcal{T}^+_l \left(\max_{T\in\mathcal{M}_k}\eta_{k,1}(\mu_k,u_k,T) + \max_{T\in\mathcal{M}_k}\eta_{k,2}(u_k,T)\right).
    \end{align*}
By Lemma \ref{lem:max_est}, we can choose some $K>l$ after fixing a large $l$ such that 
$\eta_{k}(\mu_k,u_k,\mathcal{T}_l^+)<\varepsilon/2$ for $k>K$, which directly leads to \eqref{conv1_est}.
\end{proof}

\section{Numerical tests}\label{sec:num}
To illustrate the performance of Algorithm \ref{anfem}, we consider two numerical examples on the unit square and the L-shaped domain. The SOLVE module employs a normalized inverse iteration of sublinear supersolutions (IISS) \cite[Algorithm 2]{MR2923523} to solve problem \eqref{min_disc} at each mesh level, cf. Algorithm \ref{alg:alg1}. Note that Algorithm \ref{alg:alg1} involves solving a $p$-Laplacian problem for the torsion function at Step 1 and to produce an inverse iteration sequence at Step 6, for which we employ a coordinate decomposition algorithm \cite{MR4653202} (cf. Algorithm \ref{alg:alg2}) with $f$ being the right hand side of the $p$-Laplacian problem in Steps 1 and 6 of Algorithm \ref{alg:alg1} and $g=0$.

\begin{algorithm}[htbp!]
\begin{algorithmic}[1]
	\State Solve (\textit{torsion funcion})
\begin{equation*}
\left\{
\begin{aligned}
-\Delta_pu_0:=-\mathrm{div}(|\boldsymbol{\nabla}u_0|^{p-2}\boldsymbol{\nabla}u_0)&=1&&\text{ in }\Omega,\\
 u_0&=0&&\text{ on }\partial\Omega.
\end{aligned}
\right.
\end{equation*}
\State $m\leftarrow 0$.
\State {$\lambda_{m} = 1/\|u_{m}\|_{L^\infty(\Omega)}^{p-1}$}. %\quad(\textit{first eigenvalue})
\While{$|\lambda_{m}-\lambda_{m-1}|/|\lambda_{m-1}|\geqslant\epsilon_{M}$}
    \State $m\leftarrow m+1$.
	\State Solve (\textit{inverse iteration})
    \begin{equation*}
        \left\{
            \begin{aligned}
                -\Delta_pu_{m}&=({u_{m-1}}/{\|u_{m-1}\|_{L^\infty(\Omega)}})^{p-1}\ &&{\rm in}\ \Omega,\\
                u_{m}&=0\ &&{\rm on} \ \partial\Omega.
            \end{aligned}
        \right.
    \end{equation*}
    \State  {$\lambda_{m} = 1/\|u_{m}\|_{L^\infty(\Omega)}^{p-1}$}. %\quad(\textit{first eigenvalue})
\EndWhile%{$\|u_{m}-u_{m-1}\|_{L^{\noteLi{\infty}}(\Omega)}/\|u_{m-1}\|_{L^{\noteLi{\infty}}(\Omega)}>\epsilon_{M}$}
	%\ENDWHILE
	\State Return {$\lambda_m$} and {$u_{m}/{\|u_{m}\|_{L^{\infty}(\Omega)}}$}.\quad(\textit{first eigenvalue and first eigenfunction})
\end{algorithmic}
\caption{Normalized IISS \cite[Algorithm 2]{MR2923523}}
\label{alg:alg1}
\end{algorithm}

\begin{algorithm}[hbtp!]
    \begin{algorithmic}[1]
        \State Define two vector fields $\boldsymbol{\xi}_{1}$, $\boldsymbol{\nu}_{0}$: $\Omega \rightarrow \mathbb{R}^{2}$; $n\leftarrow 0$.
\While{$n=1$ or  $\frac{\|u_{n}-u_{n-1}\|_{L^2(\Omega)}}{\| u_{n-1}\|_{L^2(\Omega)}}>\epsilon_{N}$}
    \State $n\leftarrow n+1$.
 \State Compute  $u_{n}$  by solving a linear problem
\begin{equation*}
\left\{
\begin{aligned}
-\Delta u_{n}&=\nabla \cdot\left(\boldsymbol{\xi}_{n} - \boldsymbol{\nu}_{n-1}\right)+f ~&&\text{in}~\Omega,\\
u_{n}&=g~&&\text{on}~\partial \Omega.
\end{aligned}
\right.
\end{equation*}
 \State Compute $\boldsymbol{\nu}_{n}$  by solving the algebraic nonlinear equation $\left|\boldsymbol{\nu}_{n}\right|^{p-2} \boldsymbol{\nu}_{n}+\boldsymbol{\nu}_{n}=\boldsymbol{\xi}_{n}+\nabla u_{n}$.

  \State Compute $\boldsymbol{\xi}_{n+1}$ as $\boldsymbol{\xi}_{n+1} = \boldsymbol{\xi}_{n}+\nabla u_{n} - \boldsymbol{\nu}_{n}$.

%\Else
%\State xxx
%\EndIf
     \EndWhile{}
 	
        \State Return ${u_{n}}$.

    \end{algorithmic}
\caption{Decomposition Coordination \cite{MR4653202}}
\label{alg:alg2}
\end{algorithm}

The MARK module of Algorithm \ref{anfem} employs D\"{o}rfler's strategy with $\theta=0.6$. It gives a subset $\mathcal{M}_k=\mathcal{M}_k^1\cup\mathcal{M}_k^2$ such that $\eta_{k,1}(\mu_k,u_k,\mathcal{M}_k^1)\geq 0.6 \eta_{k,1}(\mu_k,u_k)$ and $\eta_{k,2}(u_k,\mathcal{M}_k^2)\geq 0.6 \eta_{k,2}(\mu_k,u_k)$.
Algorithm \ref{anfem} proceeds until the relative error $|\mu_{k-1} - \mu_k|/ \mu_{k-1}$ is below a given tolerance $\epsilon_{K}$. To terminate the algorithm in finite loops, an upper bound $K$ is specified for the counter $k$ in Algorithm \ref{anfem}. We take $\epsilon_K:=10^{-4}$ and $K:=9$ in Algorithm \ref{anfem}, $\epsilon_M=\epsilon_N:=10^{-5}$ in Algorithms \ref{alg:alg1} and \ref{alg:alg2}. Moreover, each component of $\boldsymbol{\xi}_1$ and $\boldsymbol{\nu}_0$ of Algorithm \ref{alg:alg2} is independently sampled from the uniform distribution $U(0, 0.5)$.

To validate the lower bound \eqref{GLB} of the eigenvalue $\lambda_1$, we compute
\begin{align*}
e_{\mu}:=\lambda_{\op{ref}}^{1/p}-\frac{\mu_{\overline{k}}^{1/p}}{1 + 2\max_ {T\in\mathcal{T}_{\overline{k}}}\mathrm{diam}(T)\mu_{\overline{k}}^{1/p}},
\end{align*}
where $\overline{k}$ denotes the final iteration number in Algorithm \ref{anfem},
and the reference eigenpair $(\lambda_{\op{ref}},u_{\op{ref}})$ is computed by 
the conforming FEM on a sufficiently fine quasi-uniform triangular mesh.

\begin{example}\label{example2}
$\Omega=(0,1)^2$ is a unit square, and $p \in \{1.2, 1.5, 2, 2.5, 3, 4, 10, 20, 30\}$.
\end{example}

\begin{figure}[htb!]
\centering
\setlength{\tabcolsep}{0pt}
\begin{tabular}{cccc}
 \includegraphics[width=0.23\textwidth,trim={2.8cm 0.5cm 2.5cm 0cm},clip]{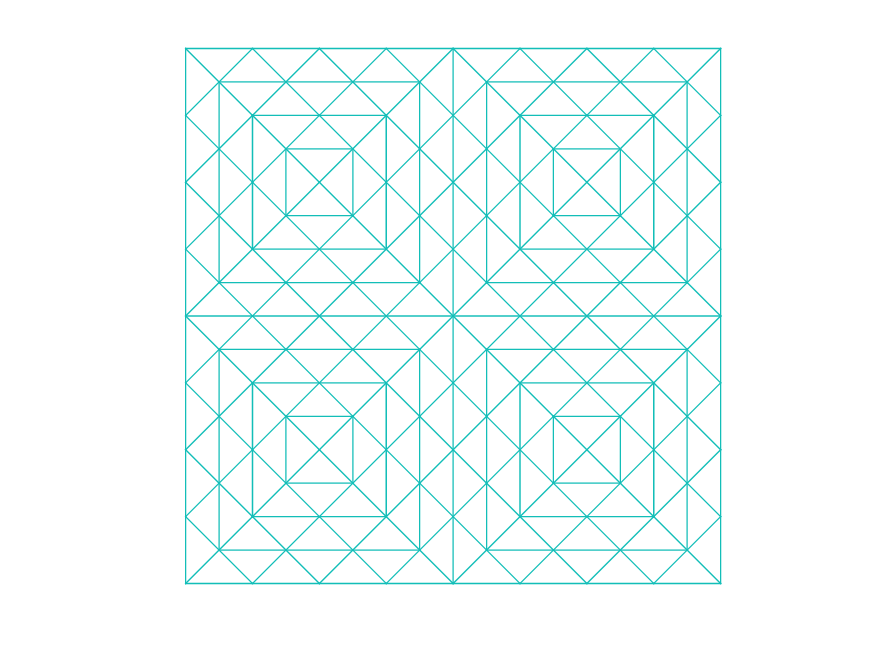}&
  \includegraphics[width=.23\textwidth,trim={2.8cm 0.5cm 2.5cm 0cm},clip]{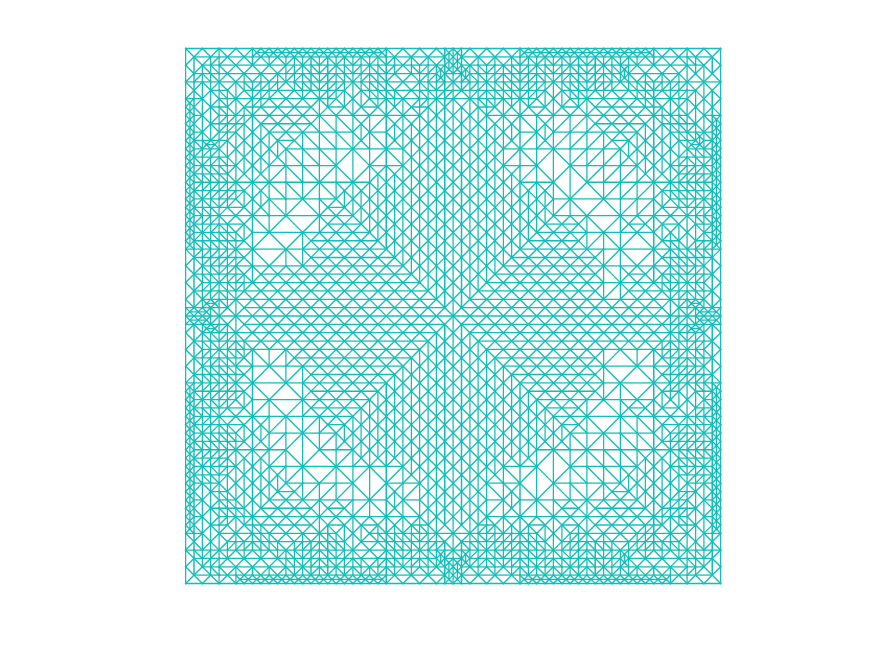} &
  \includegraphics[width=.23\textwidth,trim={2.8cm 0.5cm 2.5cm 0cm},clip]{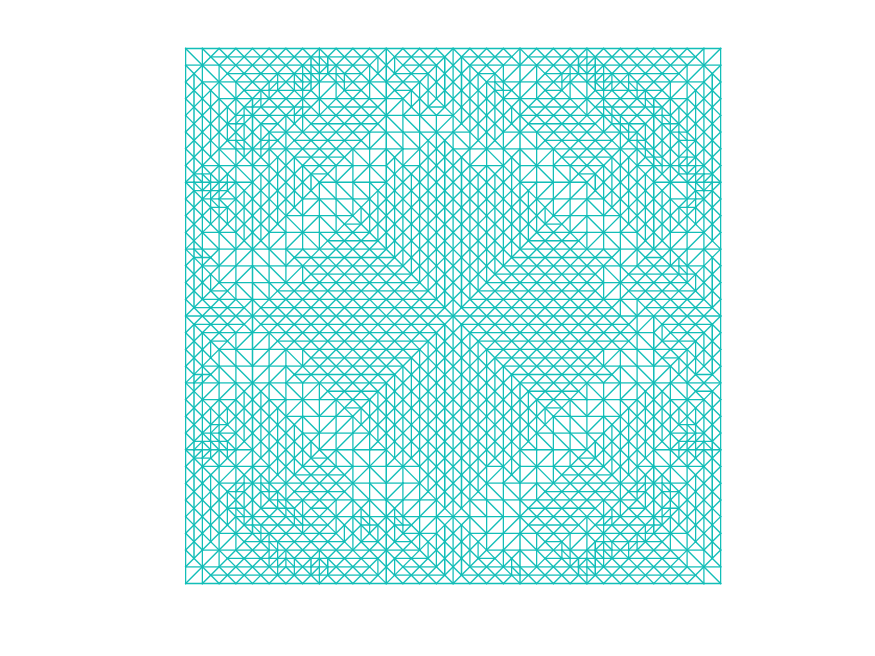} &
  \includegraphics[width=.23\textwidth,trim={2.8cm 0.5cm 2.5cm 0cm},clip]{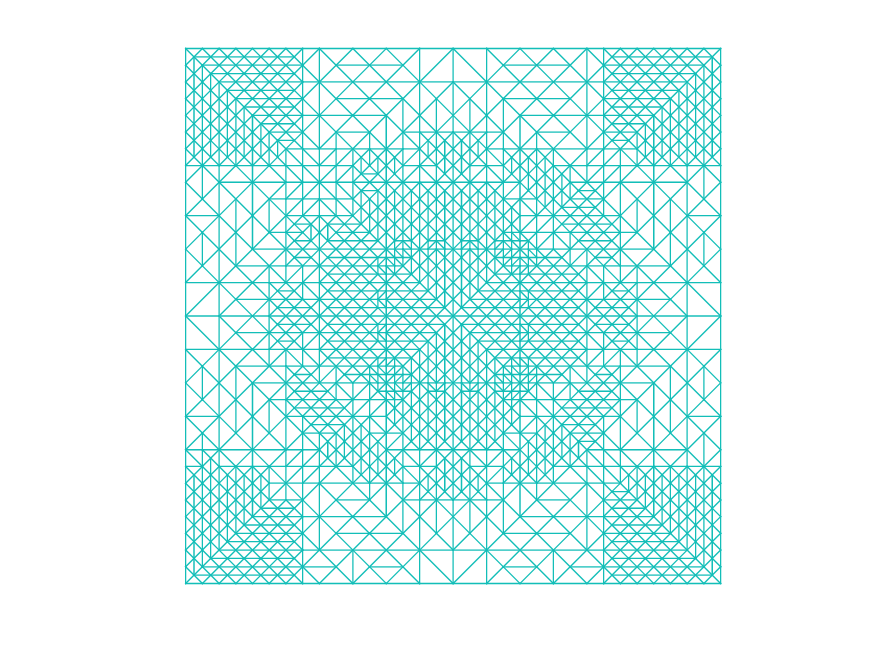}\\
  (a) $\mathcal{T}_0$ (400) & (b)$\mathcal{T}_3$ (7922) & (c) $\mathcal{T}_3$ (6100) & (d) $\mathcal{T}_3$ (4311) \\
   \includegraphics[width=0.23\textwidth,trim={2.8cm 0.5cm 2.5cm 0cm},clip]{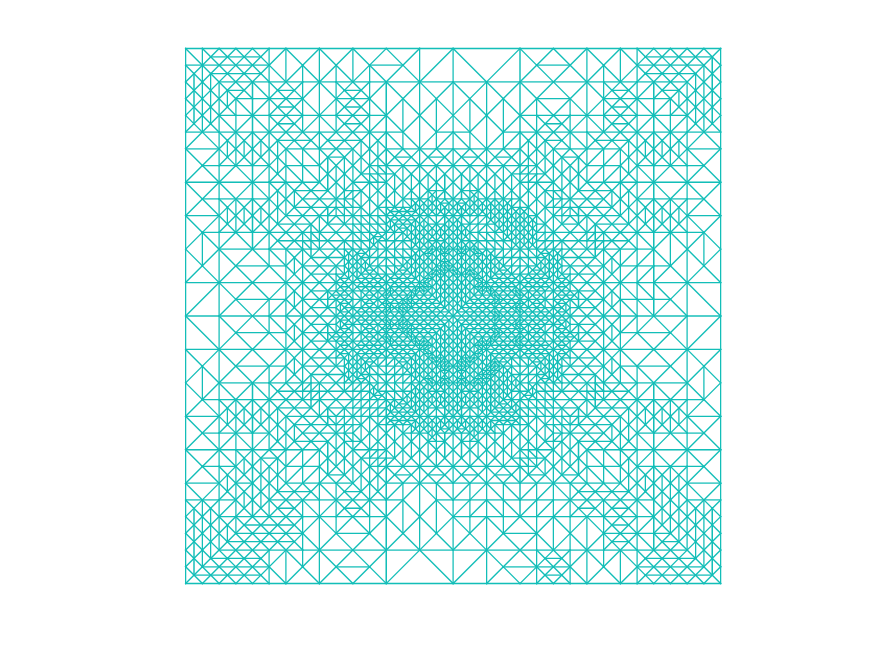}&
  \includegraphics[width=.23\textwidth,trim={2.8cm 0.5cm 2.5cm 0cm},clip]{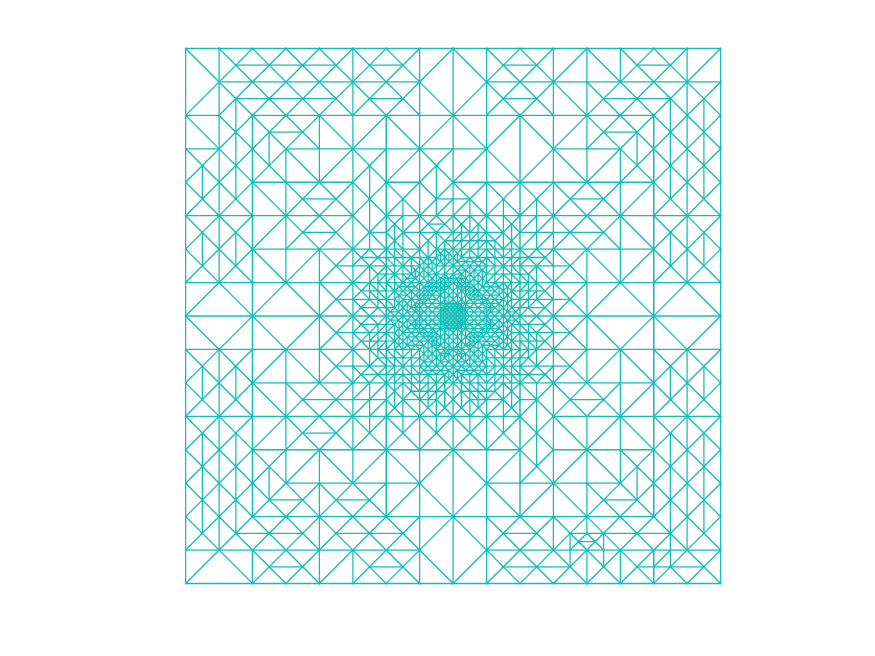} &
  \includegraphics[width=.23\textwidth,trim={2.8cm 0.5cm 2.5cm 0cm},clip]{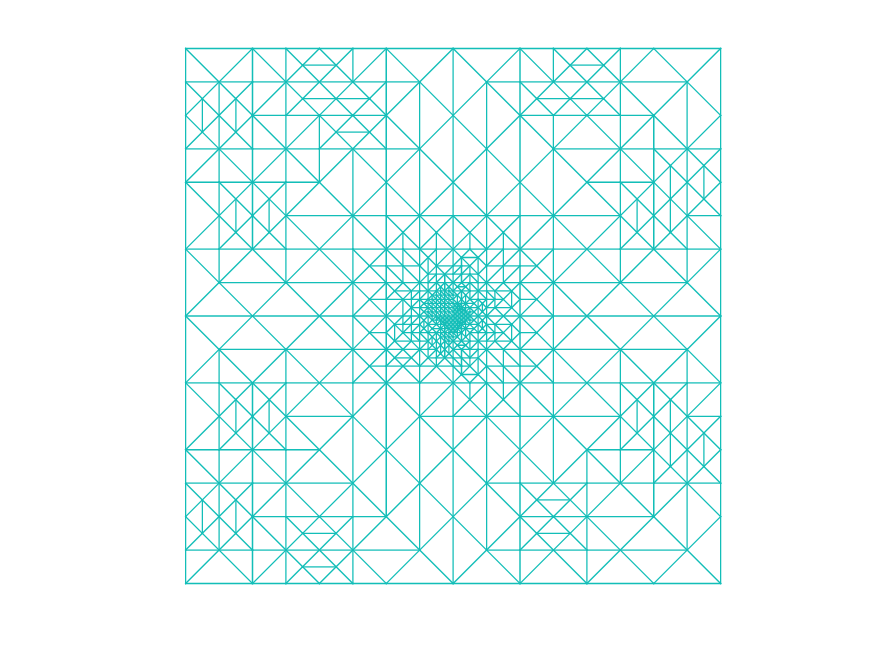} &
  \includegraphics[width=.23\textwidth,trim={2.8cm 0.5cm 2.5cm 0cm},clip]{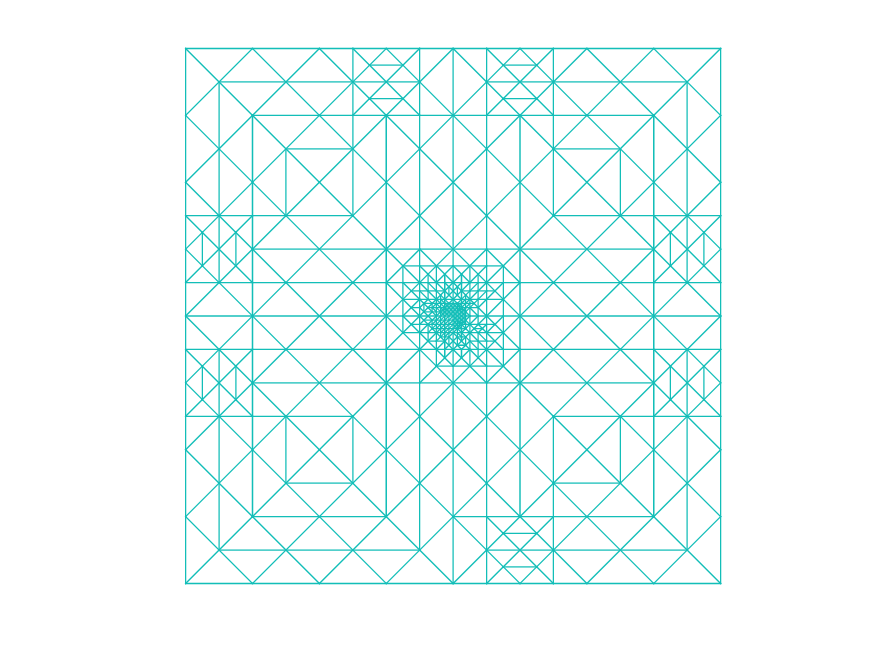}\\
  (e) $\mathcal{T}_4$ (7874) & (f)$\mathcal{T}_4$ (3244) & (g) $\mathcal{T}_4$ (1676) & (i) $\mathcal{T}_4$ (1223) 
\end{tabular}
\caption{The initial mesh $\mathcal{T}_0$ and the adaptively generated meshes after 3 refinement steps for $p\in\{1.2, 1.5, 2.5\}$ and 4 refinement steps for $p\in\{4, 10,20,30\}$ in Example \ref{example2}.% and the number of triangle elements are 1740, 1672, and 1278. 
The numbers in the parentheses are dof.}
\label{fig:adptiveMesh=eg2}
\end{figure}

\begin{figure}[htb!]
    \centering
    \setlength{\tabcolsep}{0pt}
\begin{tabular}{ccc}
\includegraphics[width=.33\textwidth,trim={10cm 0cm 11cm 0.7cm},clip]{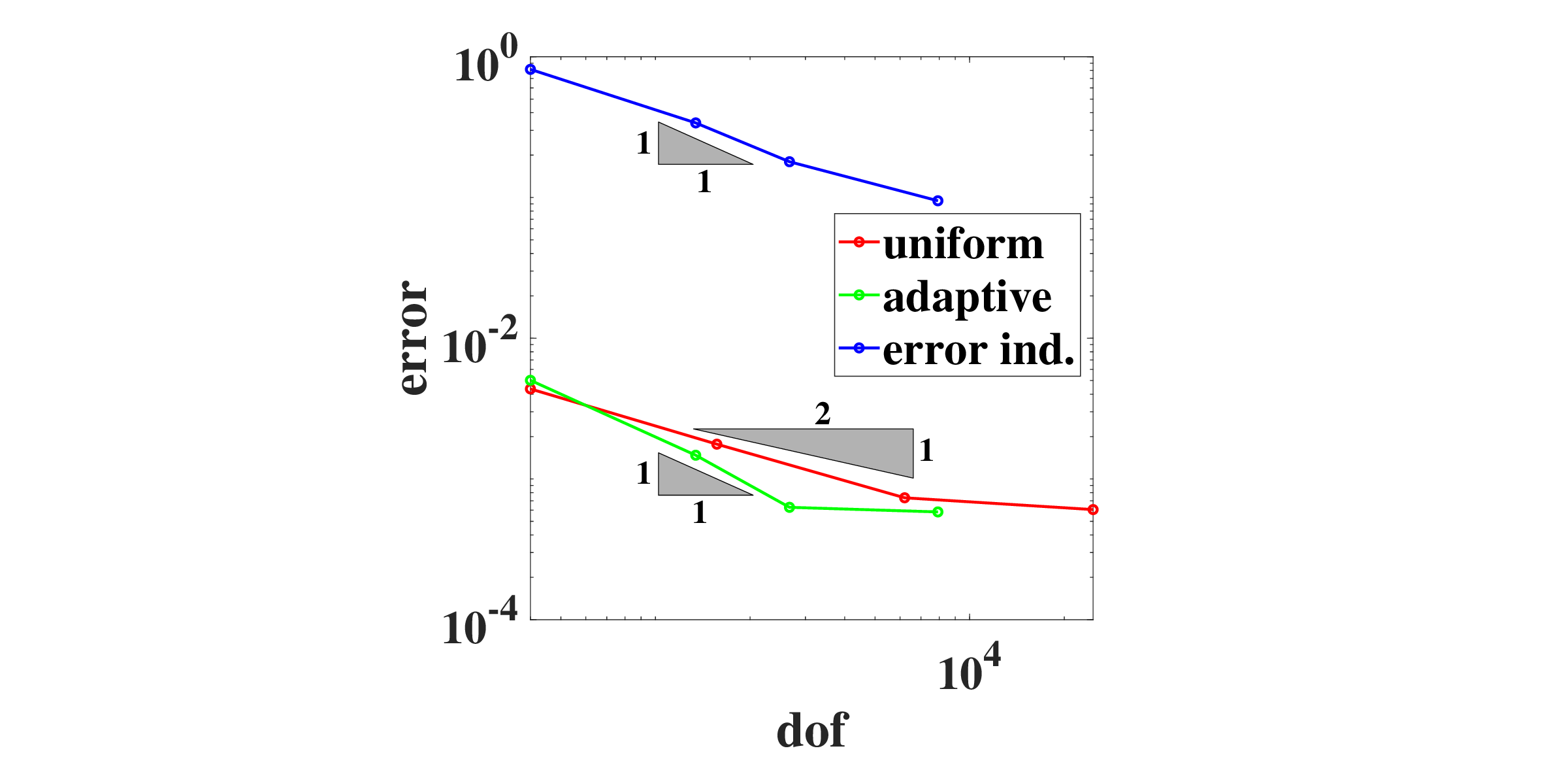} &
\includegraphics[width=.33\textwidth,trim={10cm 0cm 11cm 0.7cm},clip]{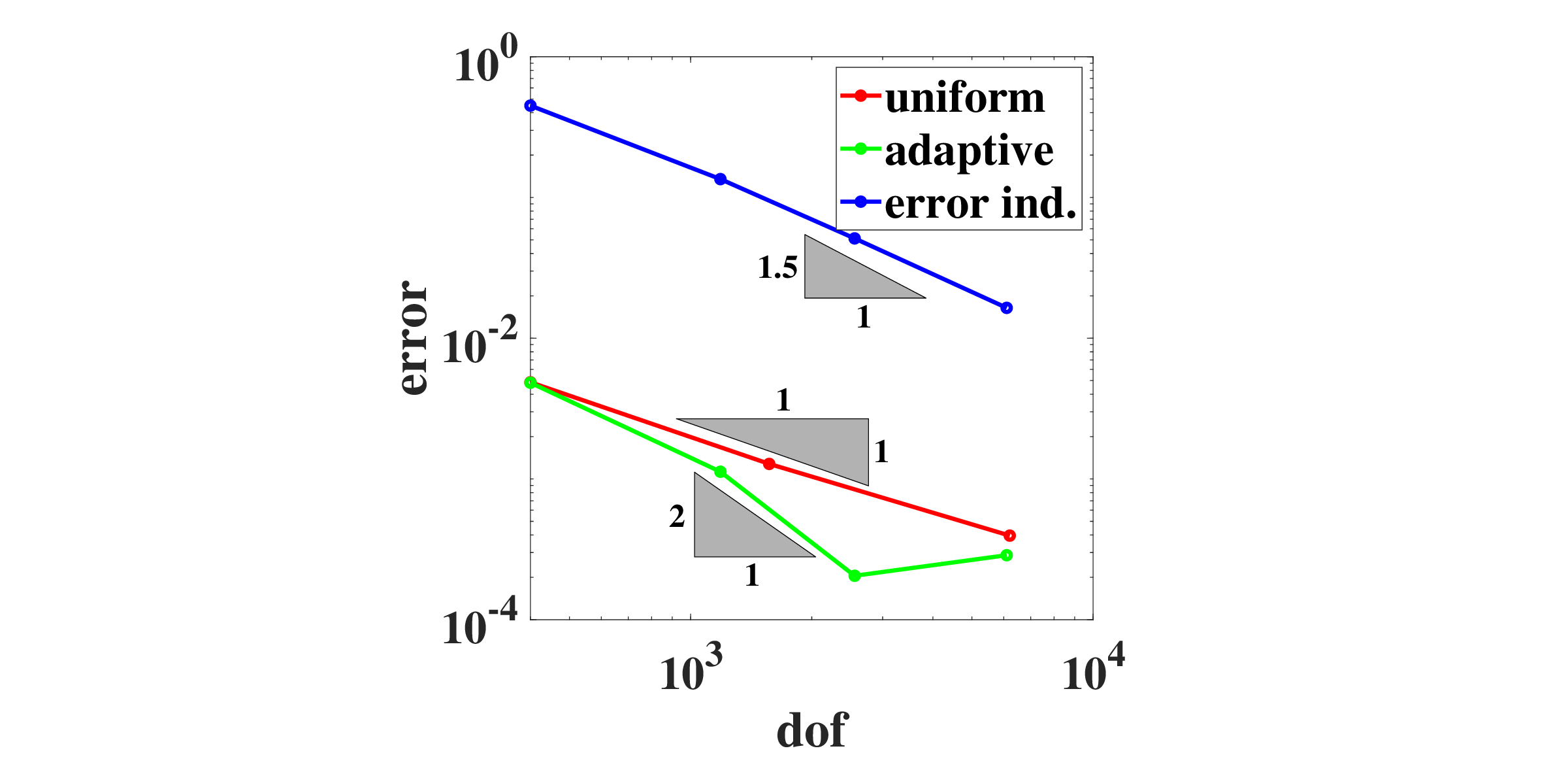} &
\includegraphics[width=.33\textwidth,trim={10cm 0cm 11cm 0.7cm},clip]{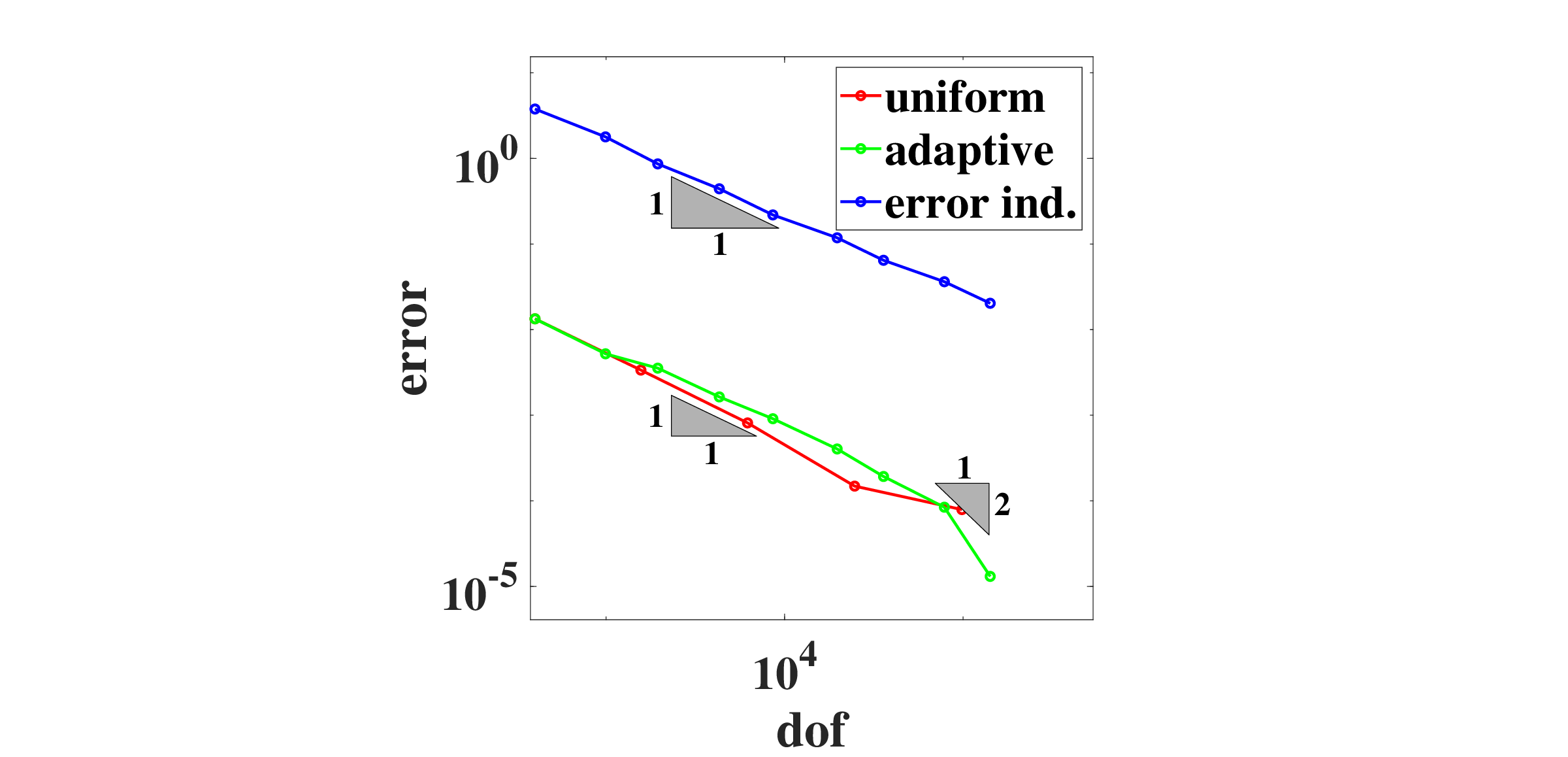}\\
(a) $p=1.2$ & (b) $p=1.5$ & (c) $p=2.5$
\end{tabular}
    \caption{The error estimator and relative error versus dof for Example \ref{example2} with $p\in\{1.2,1.5, 2.5\}$.}
    \label{fig:uniform-afem-eg2}
    \end{figure}
    
Starting from an initial mesh $\mathcal{T}_0$ (cf. Fig. \ref{fig:adptiveMesh=eg2}(a)), the sequence of approximate first eigenvalues $\{\mu_k\}_{k\geq 0}$ by Algorithm \ref{anfem} are reported in Table \ref{tab:UnitDiskEigenvalue} for $p\in\{1.2, 1.5, 2, 2.5, 3\}$ and in Table \ref{tab:UnitDiskEigenvalue102030} for $p\in\{4, 10, 20, 30\}$, respectively, where $k$ denotes the adaptive iteration number, and $\mu_k$ is the approximate first eigenvalue on the $k$-th adaptive mesh level. We also present the reference eigenvalue $\lambda_{\op{ref}}$.
The exact first eigenvalue of Laplacian ($p = 2$) on the unit square is $2\pi^2 \approx 1.973921\times10^1$, and the computed value $\mu_6 = 1.973800\times10^1$ has a relative error below $10^{-4}$, showing the high accuracy of Algorithm \ref{anfem}. It is observed that $\{\mu_k\}_{k \geq 0}$ is an increasing sequence and approaches the reference solution from below as $k$ increases for each $p$. These numerical evidences agree with the convergence result of Algorithm \ref{anfem} in Theorem \ref{thm:conv_est}. Moreover,  $e_{\mu}$ is positive for all cases, and its value lies in the interval $[0.869761,1.665345]$, indicating the robustness of the lower bound \eqref{GLB}.

\begin{table}[hbt!]
\centering
\caption{The quantitative result for Example \ref{example2} with $p\in\{1.2, 1.5, 2, 2.5, 3\}$: the number $k$ of adaptive loops, degrees of freedom and the computed first eigenvalue $\mu_k$.}
\label{tab:UnitDiskEigenvalue}
\resizebox{\textwidth}{!}{
\begin{tabular}{c cc cc cc cc cc}
    \toprule
    \multirow{2}{*}{$k$}&
    \multicolumn{2}{c}{$p=1.2$}&
    \multicolumn{2}{c}{$p=1.5$}&\multicolumn{2}{c}{$p=2$}&
    \multicolumn{2}{c}{$p=2.5$}&\multicolumn{2}{c}{$p=3$}\\
    %\cline{2-3} \cline{4-5} \cline{6-7} \cline{8-9}
    \cmidrule(lr){2-11} %\cmidrule(lr){4-5} \cmidrule{lr}{6-7} \cmidrule{8-9}
     ~ & {dof} & $\mu_k$ & {dof} & $\mu_k$ & {dof} & $\mu_k$ & {dof} & $\mu_k$ & {dof} & $\mu_k$ \\
    \midrule
        0 & 400 & 6.169909 & 400 & 1.002415$\times10^{1}$ & 400 & 1.956992$\times10^{1}$ & 400 & 3.546683$\times10^{1}$  & 400 & 6.157986$\times10^{1}$  \\

        1 & 1343 & 6.191914 & 1186 & 1.006145$\times10^{1}$ & 1081  & 1.968469$\times10^{1}$ & 993 & 3.576048$\times10^{1}$ & 959 & 6.233010$\times10^{1}$ \\

        2 & 2670 & 6.197158 & 2556 & 1.007072$\times10^{1}$ & 2450  & 1.971080$\times10^{1}$ & 1949  & 3.582054$\times10^{1}$ & 1871 & 6.246818$\times10^{1}$ \\

        3 & 7922 & 6.197441 & 6100 & 1.006989$\times10^{1}$ & 4894  & 1.972702$\times10^{1}$ & 4311 & 3.588927$\times10^{1}$  & 4408 & 6.261686$\times10^{1}$ \\

        4 &  &  &   &   & 12416 & 1.973391$\times10^{1}$ & 8601 & 3.591541$\times10^{1}$  & 8313 & 6.268953$\times10^{1}$ \\

        5 &    &   &   &   & 22606 & 1.973649$\times10^{1}$ & 19718 & 3.593363$\times10^{1}$  & 19151 & 6.272179$\times10^{1}$  \\

        6 &    &   &   &   & 52742 & 1.973800$\times10^{1}$ & 35857  & 3.594122$\times10^{1}$ & 36663 & 6.271651$\times10^{1}$ \\

        7 &    &   &   &   &   &   & 78594 & 3.594511$\times10^{1}$ &  &   \\

        8 &    &   &    &   &    &   & 141972 & 3.594767$\times10^{1}$ &   &    \\
    \midrule
   \multirow{2}{*}{reference} & {dof} & $\lambda_{\op{ref}}$ &  {dof}   & $\lambda_{\op{ref}}$ & {dof} & $\lambda_{\op{ref}}$  & {dof} & $\lambda_{\op{ref}}$ & {dof} & $\lambda_{\op{ref}}$ \\ \cmidrule(lr){2-11}
   & 8321 &  6.201058 & 33025 & 1.007279$\times10^{1}$ & 131585 & 1.973932$\times10^{1}$ &  525313 & 3.594814$\times10^{1}$ & 131585 & 6.277145$\times10^{1}$ \\
   \midrule
    $e_{\mu}$ & \multicolumn{2}{c}{1.665345} &
               \multicolumn{2}{c}{1.361943} &
               \multicolumn{2}{c}{0.965675} &
               \multicolumn{2}{c}{0.869761} &
               \multicolumn{2}{c}{1.319564} \\
    \bottomrule
    \end{tabular}}
\end{table}

\begin{table}[hbt!]
\centering
\caption{The quantitative result for Example \ref{example2} with $p \in \{4, 10, 20, 30\}$: the number $k$ of adaptive loops, {degrees of freedom} and the computed first eigenvalue $\mu_k$.}
\label{tab:UnitDiskEigenvalue102030}
\resizebox{\textwidth}{!}{
\begin{tabular}{c cc cc cc cc}
    \toprule
    \multirow{2}{*}{$k$}&\multicolumn{2}{c}{$p=4$}&
    \multicolumn{2}{c}{$p=10$}&
    \multicolumn{2}{c}{$p=20$}&\multicolumn{2}{c}{$p=30$}\\
    \cmidrule(lr){2-9}
     ~ &{dof}  & $\mu_k$ &{dof} & $\mu_k$ &{dof} & $\mu_k$ &{dof} & $\mu_k$ \\
    \midrule
        0 &  400 & 1.712484$\times10^{2}$  & 400  & 3.210012$\times10^{4}$  &  400  & $7.645880\times10^{7}$ & 400  &  $1.427302\times10^{11}$  \\

        1 &  852 & 1.741399$\times10^{2}$  & 786  & 3.363833$\times10^{4}$  &  692  & $8.087508\times10^{7}$ & 551  &  $1.284860\times10^{11}$  \\

        2 &  1691 &  1.752220$\times10^{2}$ & 1112  & 3.417538$\times10^{4}$  &  795  & $8.130746\times10^{7}$ & 648  &  $1.285288\times10^{11}$  \\

        3 &  3952 & 1.757307$\times10^{2}$  & 1712  & 3.464071$\times10^{4}$  &  1015  & $8.159688\times10^{7}$ & 772  &  $1.325097\times10^{11}$  \\

        4 &  7874 & 1.762077$\times10^{2}$  & 3244  &  3.503375$\times10^{4}$ &  1676  & $8.521567\times10^{7}$ & 1223  &  $1.423945\times10^{11}$  \\

        5 &  16151 & 1.764255$\times10^{2}$  & 6933  &  3.521397$\times10^{4}$ &   3211 & $8.665902\times10^{7}$ & 2301  &  $1.526896\times10^{11}$  \\

        6 &  35039 & 1.763709$\times10^{2}$  & 15073  & 3.545912$\times10^{4}$  &  7338  & $8.877217\times10^{7}$ & 5087  &  $1.583267\times10^{11}$ \\

        7 &  69557 & 1.764861$\times10^{2}$  &  33322 & 3.550409$\times10^{4}$  &  16026  & $8.981353\times10^{7}$ &  11207 &   $1.605933\times10^{11}$ \\

        8 &  149599 & 1.765188$\times10^{2}$  & 73960  &  3.563539$\times10^{4}$ &  35654  & $9.071346\times10^{7}$ & 25523  &  $1.630840\times10^{11}$  \\

        9 & 294692  & 1.765924$\times10^{2}$  & 158267  &  3.566347$\times10^{4}$ & 80900  &  $9.127308\times10^{7}$ &   57416 &  $1.638727\times10^{11}$  \\
    \midrule
    \multirow{2}{*}{reference} & {dof} & $\lambda_{\op{ref}}$ &  {dof}   & $\lambda_{\op{ref}}$ & {dof} & $\lambda_{\op{ref}}$  & {dof} & $\lambda_{\op{ref}}$ \\ \cmidrule(lr){2-9}
    & 1355681 & 1.766239$\times10^{2}$ & 525313 & 3.580941$\times10^{4}$ &   131585 & 9.204824$\times10^{7}$  & 131585 & 1.650249$\times10^{11}$ \\
    \midrule
    $e_{\mu}$ & \multicolumn{2}{c}{1.141285} &
               \multicolumn{2}{c}{0.957366} &
               \multicolumn{2}{c}{0.962872} &
               \multicolumn{2}{c}{0.879252} \\
    \bottomrule
    \end{tabular}}
\end{table}

To verify the efficiency of Algorithm \ref{anfem} against uniform refinement, we plot in Fig. \ref{fig:uniform-afem-eg2} the convergence of error estimators, the relative errors $|\mu_k-\mu_{\op{ref}}|/\mu_{\op{ref}}$, and relative errors by the uniform refinement. Algorithm \ref{anfem} consistently outperforms uniform refinement. %Note that $p=2$ corresponds to the Laplacian over a convex domain, and the eigenfunction has no singularity, which is omitted from the numerical test. 
We refer to Fig. \ref{fig:adptiveMesh=eg2}(b)-(i) for an illustration of the adaptively generated meshes for $p\in\{1.2,1.5,2.5\}$ after 3 refinement steps and $p\in\{4,10,20,30\}$ after 4 refinement steps. It is observed that the marked element patch moves from the boundary to the domain center as the parameter $p$ grows. This behavior is consistent with the theoretical findings in \cite{MR2062882}: the first eigenvalue of the $p$-Laplacian converges to the Cheeger constant of $\Omega$ as $p \to 1^+$, and the associated eigenfunction approaches the characteristic function of the associated Cheeger domain. It also agrees with the result in \cite{MR4566532}, which asserts that the $\infty$-ground state, i.e. the limit of the first eigenfunction of the $p$-Laplacian as $p \to \infty$, is $\infty$-harmonic in the viscosity sense and further continuously differentiable \cite{MR2185662} except along two symmetric diagonal segments near the center.
\begin{example}\label{example3}
$\Omega=(0,2)^2\setminus [1,2)^2$ is L-shaped, with $p \in \{1.1, 1.2, 1.5, 2, 2.5, 3, 4, 10, 20, 30\}$.
\end{example}
\begin{figure}[hbt!]
\centering
\setlength{\tabcolsep}{0pt}
\begin{tabular}{cccc}
\includegraphics[width=.23\textwidth,trim={2.8cm 0.5cm 2.5cm 0cm},clip]{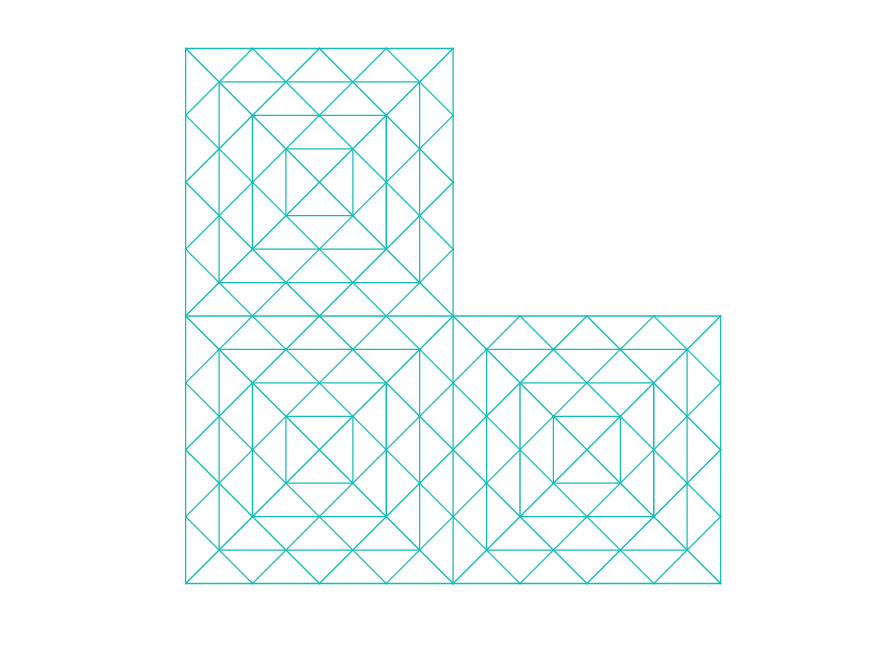}&
\includegraphics[width=.23\textwidth,trim={2.8cm 0.5cm 2.5cm 0cm},clip]{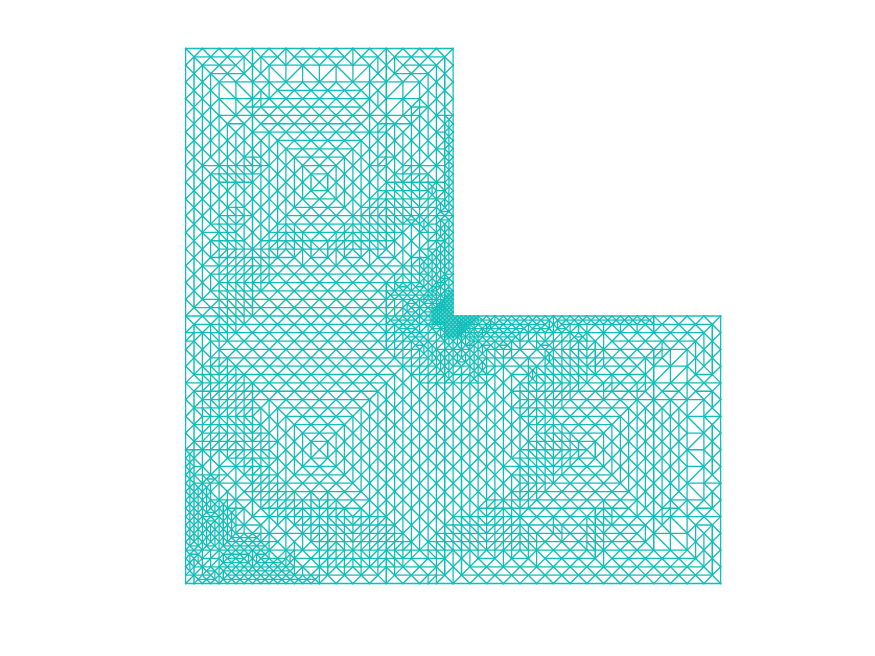} &
\includegraphics[width=.23\textwidth,trim={2.8cm 0.5cm 2.5cm 0cm},clip]{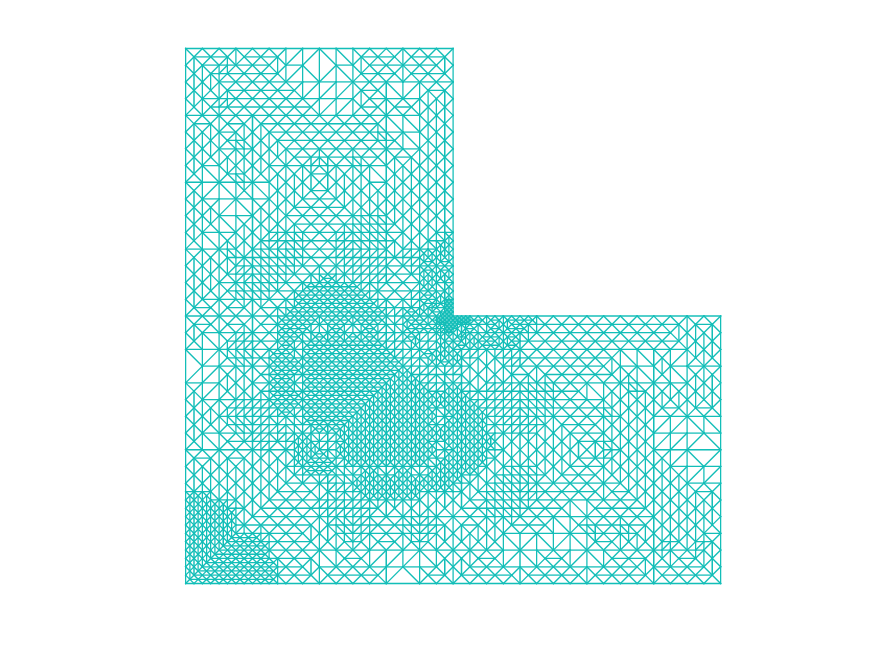} &
\includegraphics[width=.23\textwidth,trim={2.8cm 0.5cm 2.5cm 0cm},clip]{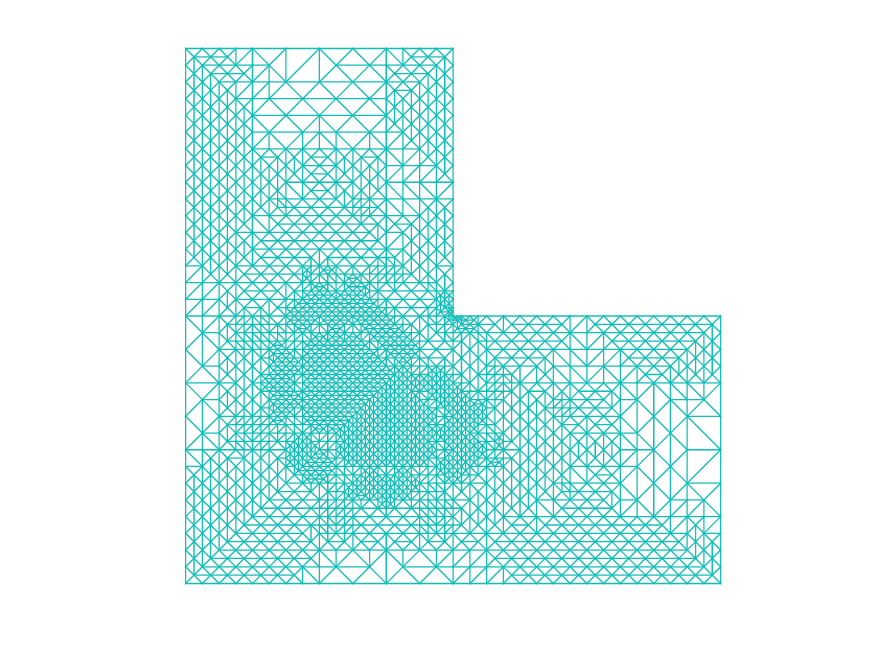}\\
(a) $\mathcal{T}_0$ (304) & (b) $\mathcal{T}_4$ (7333) & (c) $\mathcal{T}_4$ (8125) & (d) $\mathcal{T}_4$ (6491)
\end{tabular}
\caption{The initial mesh $\mathcal{T}_0$ and the adaptively generated meshes {after 4 refinement steps} for Example \ref{example3}: $p\in\{1.5, 2, 2.5\}$. %and the number of triangle elements are 1052, 1086, and 923. 
The numbers in the parentheses denote the dof.}
\label{fig:adptiveMesh=eg3}
\end{figure}

\begin{figure}[hbt!]
\centering
\setlength{\tabcolsep}{0pt}
\begin{tabular}{ccc}
\includegraphics[width=.32\textwidth,trim={10cm 0cm 11cm 0.7cm},clip]{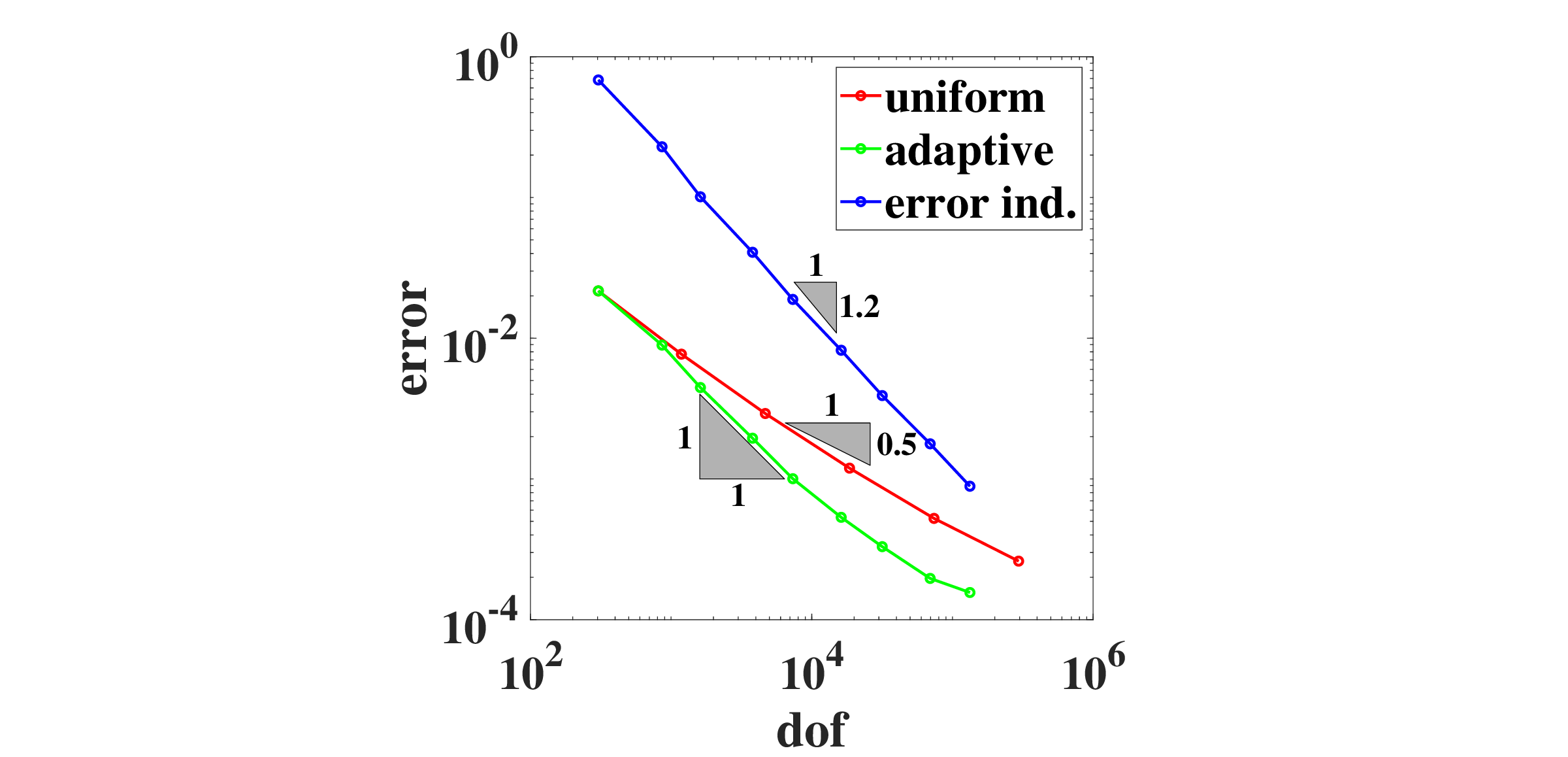} &
\includegraphics[width=.32\textwidth,trim={10cm 0cm 11cm 0.7cm},clip]{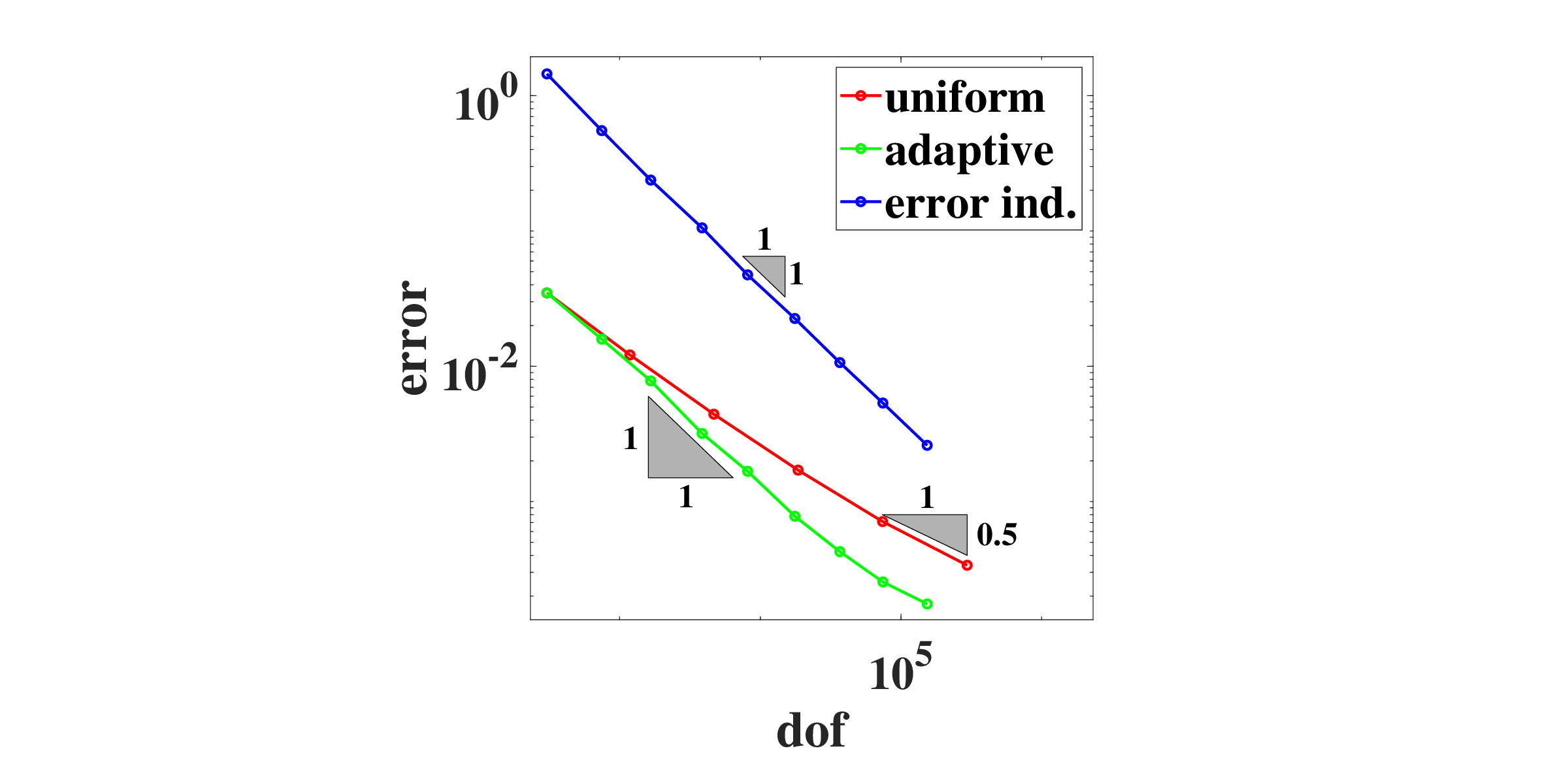} &
\includegraphics[width=.32\textwidth,trim={10cm 0cm 11cm 0.7cm},clip]{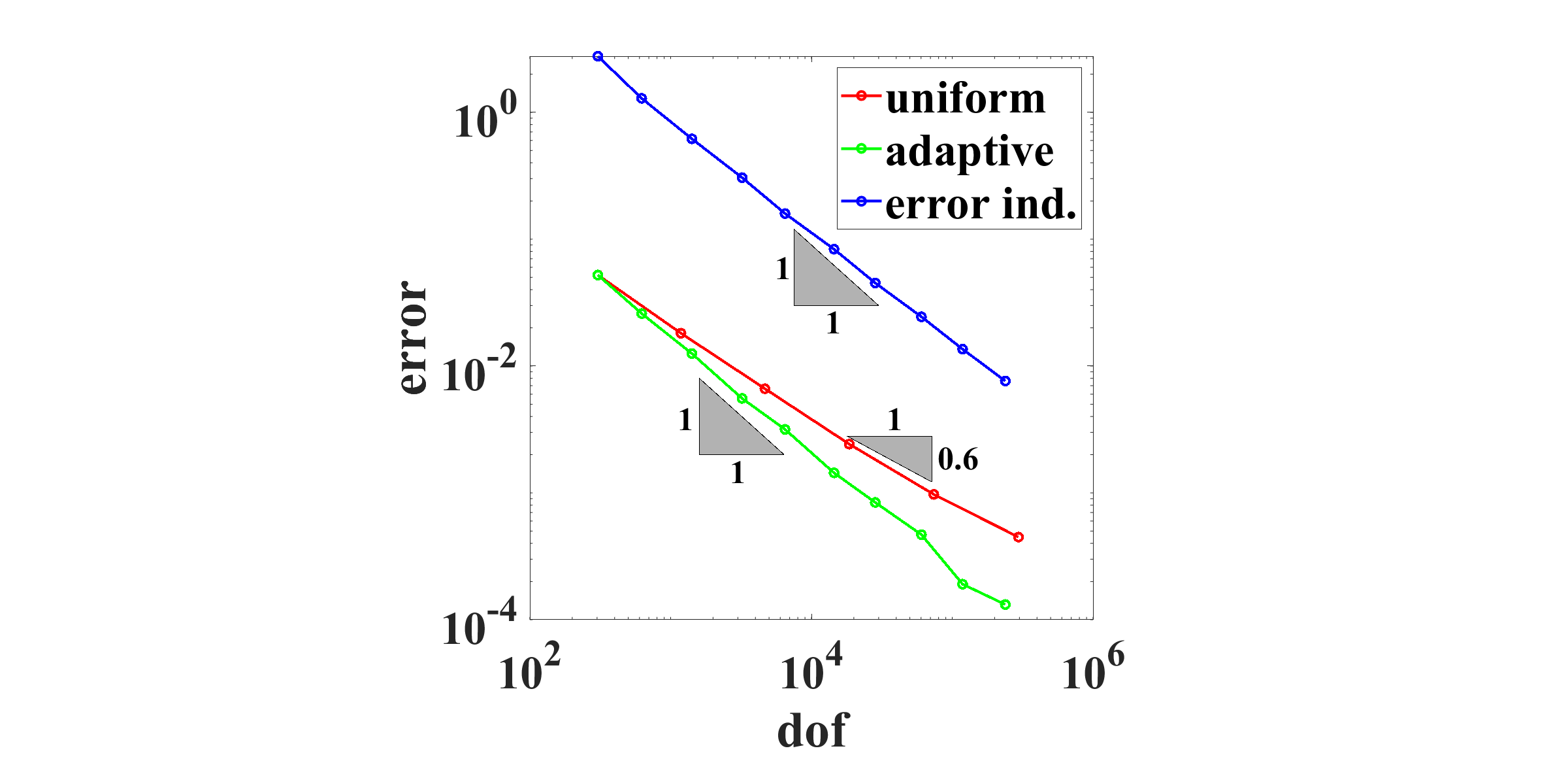}\\
(a) $p=1.5$ & (b) $p=2$ & (c) $p=2.5$
\end{tabular}
\caption{The error estimator and relative error versus dof. for Example \ref{example3} with $p\in\{1.5,2,2.5\}$.}
\label{fig:uniform-afem}
\end{figure}
\begin{table}[hbt!]
\centering
\caption{The quantitative result for Example \ref{example3} with $p\in\{1.1, 1.2, 1.5, 2, 2.5\}$: the number $k$ of adaptive loops, degrees of freedom and the computed first eigenvalue $\mu_k$.}
\label{tab:LshapedEigenvalue}
\resizebox{\textwidth}{!}{
\begin{tabular}{c cc cc cc cc cc}
    \toprule
    \multirow{2}{*}{$k$}&
    \multicolumn{2}{c}{$p=1.1$}&
    \multicolumn{2}{c}{$p=1.2$}&\multicolumn{2}{c}{$p=1.5$}&
    \multicolumn{2}{c}{$p=2$}&\multicolumn{2}{c}{$p=2.5$}\\
    \cmidrule(lr){2-11}
     ~ & {dof} & $\mu_k$ & {dof} & $\mu_k$ & {dof} & $\mu_k$ & {dof} & $\mu_k$ & {dof} & $\mu_k$ \\
    \midrule
        0 & 304 & 3.196434 & 304 & 3.795247 & 304 & 5.559635 & 304 & 9.304502 & 304  & 1.463779$\times10^{1}$ \\

        1 & 922  & 3.226163 & 939 & 3.833637 & 862 & 5.632225 & 746 & 9.487728 & 623 & 1.504322$\times10^{1}$ \\

        2 & 1572  & 3.236086 & 1783 & 3.847226 & 1616   & 5.657614 & 1663 & 9.565479 & 1415 &   1.524988$\times10^{1}$\\

        3 & 3153  & 3.247819 & 4436 & 3.853276 & 3793   & 5.671911 & 3854 & 9.609938 & 3217 & 1.535763$\times10^{1}$ \\

        4 & 7869  & 3.252765 & 8886 & 3.856288 & 7333   & 5.677275 & 8125 & 9.624524 & 6491 & 1.539451$\times10^{1}$ \\

        5 & 17870 & 3.252985 & 22524 & 3.857687 & 16190 & 5.679947 & 17594 & 9.633150 & 14474 & 1.542122$\times10^{1}$ \\

        6 &  &  & 46404 & 3.858336 & 31700 & 5.681104 & 36770 & 9.636549 & 28387 & 1.543044$\times10^{1}$ \\

        7 &  &   & 114493 & 3.858566 & 69471 & 5.681865 & 74341 & 9.638205 & 60428 & 1.543619$\times10^{1}$ \\

        8 &  &   &     &   & 133162 & 5.682095 & 153531 & 9.638974 & 118516 & 1.544048$\times10^{1}$ \\

        9 &   &  &     &   &     &   &   &  & 238274 & 1.544139$\times10^{1}$ \\
    \midrule
    \multirow{2}{*}{reference} & {dof} & $\lambda_{\op{ref}}$ &  {dof}   & $\lambda_{\op{ref}}$ & {dof} & $\lambda_{\op{ref}}$  & {dof} & $\lambda_{\op{ref}}$ & {dof} & $\lambda_{\op{ref}}$ \\ \cmidrule(lr){2-11}
   & 24833 & 3.257135  & 394241 & 3.859206 & 394241  & 5.682982 &  394241 & 9.640661  & 394241 &  1.544342$\times10^{1}$ \\
   \midrule
    $e_{\mu}$ & \multicolumn{2}{c}{1.236997} &
               \multicolumn{2}{c}{0.497906} &
               \multicolumn{2}{c}{0.528858} &
               \multicolumn{2}{c}{0.868283} &
               \multicolumn{2}{c}{0.812997} \\
    \bottomrule
    \end{tabular}}
\end{table}
The convergence history of Algorithm \ref{anfem} is reported in Tables \ref{tab:LshapedEigenvalue} and \ref{tab:LshapedEigenvalue102030}, which exhibits a similar convergence behavior of the computed first eigenvalues for each $p$. In particular, Table \ref{tab:LshapedEigenvalue} shows that the computed eigenvalue after 8 refinement steps is $\mu_8 = 9.638974$ for $p=2$ using Algorithm \ref{anfem}. The first eigenvalue was estimated to be $9.6397238389738806$ in \cite{MR4332791} using the $P_2$ Lagrange FEM on uniformly refined meshes combined with Aitken extrapolation, which yields a relative error of $7.778635 \times 10^{-5}$. Tables \ref{tab:LshapedEigenvalue} and \ref{tab:LshapedEigenvalue102030} indicate that the numerical eigenvalues by the adaptive method are smaller than the reference values and gradually increase toward the reference value for each $p$.

\begin{table}[hbt!]
\centering
\caption{The quantitative result for Example \ref{example3} with $p \in \{3, 4, 10, 20, 30\}$: the number $k$ of adaptive loops, {degrees of freedom} and the computed first eigenvalue $\mu_k$.}
\label{tab:LshapedEigenvalue102030}
\resizebox{\textwidth}{!}{
\begin{tabular}{c cc cc cc cc cc}
    \toprule
    \multirow{2}{*}{$k$}&\multicolumn{2}{c}{$p=3$}&\multicolumn{2}{c}{$p=4$}&
    \multicolumn{2}{c}{$p=10$}&
    \multicolumn{2}{c}{$p=20$}&\multicolumn{2}{c}{$p=30$}\\
    \cmidrule(lr){2-11}
     ~ & {dof} & $\mu_k$  & {dof} & $\mu_k$ & {dof} & $\mu_k$ & {dof} & $\mu_k$ & {dof} & $\mu_k$ \\
    \midrule
        0 & 304 & 2.222520$\times10^{1}$ & 304 & 4.777101$\times10^{1}$ & 304 & 1.948768$\times10^{3}$ & 304 & $3.283351\times10^{5}$ & 304 & $4.058800\times10^{7}$ \\

        1 & 575 & 2.313947$\times10^{1}$ & 819 & 5.190426$\times10^{1}$ & 517 & 2.733371$\times10^{3}$ & 349 & $4.16802.9\times10^{5}$ & 349 & $5.717465\times10^{7}$ \\

        2 & 1320 & 2.355887$\times10^{1}$ & 2359 & 5.353567$\times10^{1}$ & 1075   & 3.147725$\times10^{3}$ & 434 & $4.976239\times10^{5}$ & 435  & $8.240122\times10^{7}$ \\

        3 & 3077 & 2.379371$\times10^{1}$ & 6735 & 5.416681$\times10^{1}$ & 2285 & 3.411874$\times10^{3}$ & 582 & $7.969714\times10^{5}$ & 519 & $9.791527\times10^{7}$ \\

        4 & 6513 & 2.387736$\times10^{1}$ & 19364 & 5.442964$\times10^{1}$ & 4582 & 3.569684$\times10^{3}$ & 912 & $8.163835\times10^{5}$ & 680  & $1.360057\times10^{8}$ \\

        5 & 14331 & 2.393012$\times10^{1}$ & 54890 & 5.454355$\times10^{1}$ & 10365 & 3.670629$\times10^{3}$ & 1680 & $8.456558\times10^{5}$ & 1108 & $1.515244\times10^{8}$ \\

        6 & 31041 & 2.395957$\times10^{1}$ & 145951 & 5.458055$\times10^{1}$ & 22428 & 3.701425$\times10^{3}$ & 3717 & $8.655574\times10^{6}$ & 2148 & $1.706919\times10^{8}$ \\

        7 & 61365 & 2.397389$\times10^{1}$ & 374413 & 5.460941$\times10^{1}$ & 47056 & 3.742084$\times10^{3}$ & 8572 & $8.844211\times10^{5}$ & 4882 & $1.945276\times10^{8}$ \\

        8 & 131301 & 2.397841$\times10^{1}$ & 965543 & 5.460382$\times10^{1}$ & 99372 & 3.757267$\times10^{3}$ & 19274 & $1.117428\times10^{6}$ & 10912 & $2.014216\times10^{8}$ \\

        9 & 251406 & 2.398179$\times10^{1}$ & 2340956 & 5.460969$\times10^{1}$ & 214331 & 3.771393$\times10^{3}$ & 41795 & $1.169217\times10^{6}$ & 24729 & $2.490159\times10^{8}$  \\

    \midrule
    \multirow{2}{*}{reference} & {dof} & $\lambda_{\op{ref}}$ &  {dof}   & $\lambda_{\op{ref}}$ & {dof} & $\lambda_{\op{ref}}$  & {dof} & $\lambda_{\op{ref}}$ & {dof} & $\lambda_{\op{ref}}$ \\ \cmidrule(lr){2-11}
   & 394241 & 2.398973$\times10^{1}$  & 6295553  & 5.461138$\times10^{1}$ &  394241  & 3.794330$\times10^{3}$ &  98817 & 1.897308$\times10^{6}$ & 1574913  &  6.756470$\times10^{8}$\\
   \midrule
    $e_{\mu}$ & \multicolumn{2}{c}{0.764403} &
               \multicolumn{2}{c}{0.291581} &
               \multicolumn{2}{c}{0.828265} &
               \multicolumn{2}{c}{1.057466} &
               \multicolumn{2}{c}{0.993707} \\
    \bottomrule
    \end{tabular}}
\end{table}

To illustrate the advantage of Algorithm \ref{anfem} over uniform refinement, we plot in Fig. \ref{fig:uniform-afem} the convergence history of error estimator, the relative errors $|\mu_k-\mu_{\op{ref}}|/\mu_{\op{ref}}$, and relative errors by the uniform refinement. Adaptively generated meshes after 4 refinement steps are shown in Fig. \ref{fig:adptiveMesh=eg3}. As in Example \ref{example2},  Algorithm \ref{anfem} identifies the singularities of the solutions by performing additional local refinements.

\section{Conclusion}\label{sec:conclusion}
We propose an adaptive finite element method based on Crouzeix-Raviart elements to compute the first Dirichlet eigenpair of the $p$-Laplacian. The method relies on two error indicators that quantitatively characterize the residual and the nonconformity associated with the discrete eigenpair. We prove the strong convergence of the sequence of discrete eigenpairs generated by the adaptive algorithm. The numerical tests confirm the theoretical findings. The analysis crucially relies on the discrete compactness of Crouzeix-Raviart finite element eigenfunctions over a sequence of adaptively generated meshes. This interesting property may be useful in the study of adaptive nonconforming finite element methods for other nonlinear eigenvalue problems, e.g., the ground state of Bose-Einstein condensates.

\appendix
\section{\nopunct}\label{appendix}
In the appendix, we extend several results on the $L^2$-based Sobolev space $\bold{H}(\mathrm{div};\Om):=\{\bold{v}\in L^2(\Om)^d, \mathrm{div}\bold{v}\in L^2(\Om)\}$ in \cite{MR851383} (cf. Theorem 2.4, Theorem 2.5 and Corollary 2.8 on pages 27-28) to the $L^q$-case ($1<q<\infty$). Let $\Om\subset\mathbb{R}^d$ be an open bounded domain with a Lipschitz boundary $\partial\Om$. The $L^q$-version of $\bold{H}(\mathrm{div};\Om)$ is defined by
\[
    \bold{W}^{q}(\mathrm{div};\Om):=\{\bold{v}\in L^q(\Om)^d, \mathrm{div}\bold{v}\in L^q(\Om)\}
\]
equipped with the graph norm $\|\bold{v}\|_{\bold{W}^q(\mathrm{div})}:=(\|\bold{v}\|_{L^q(\Om)}^q+\|\mathrm{div}\bold{v}\|_{L^q(\Om)}^{q})^{1/q}$. For the space $W^{1,p}(\Om)$ with $p=q/(q-1)$, we also need its trace space $W^{1/q,p}(\p\Om)$ and dual space $W^{-1/q,q}(\p\Om)$.

\begin{theorem}\label{thm:density}
    The space $C^{\infty}(\overline{\Om})^d$ is dense in $\bold{W}^{q}(\mathrm{div};\Om)$.
\end{theorem}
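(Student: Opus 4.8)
The plan is to carry over the classical density proof for $\bold{H}(\mathrm{div};\Om)$ in \cite{MR851383} to the $L^q$ framework; the only genuinely new ingredient is that Hilbert-space reasoning is replaced by the continuity of translations in $L^q$, which is where the hypothesis $q<\infty$ enters. The scheme is: localize by a partition of unity adapted to the Lipschitz boundary, mollify directly the piece supported away from $\p\Om$, and near $\p\Om$ first translate so that the support is pushed across the boundary graph and then mollify.

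For the localization, since $\p\Om$ is Lipschitz I would fix a finite open cover $O_1,\dots,O_N$ of $\p\Om$ in which, after a rotation of coordinates, $\Om\cap O_j=\{(x',x_d):x_d<\phi_j(x')\}$ for a Lipschitz $\phi_j$, together with an open set $O_0$ satisfying $\overline{O_0}\subset\Om$, so that $\{O_j\}_{j=0}^N$ covers $\overline\Om$; let $\{\psi_j\}_{j=0}^N$ be a $C^\infty$ partition of unity subordinate to it. Given $\bold{v}\in\bold{W}^q(\mathrm{div};\Om)$ one writes $\bold{v}=\sum_{j=0}^N\psi_j\bold{v}$; each $\psi_j\bold{v}\in\bold{W}^q(\mathrm{div};\Om)$ with $\mathrm{div}(\psi_j\bold{v})=\psi_j\,\mathrm{div}\bold{v}+\bold{\nabla}\psi_j\cdot\bold{v}\in L^q(\Om)$ (using $\bold{\nabla}\psi_j\in L^\infty$) and $\mathrm{supp}(\psi_j\bold{v})\subset O_j$, so it suffices to approximate each $\psi_j\bold{v}$ in the graph norm by functions in $C^\infty(\overline\Om)^d$.

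For $j=0$, the field $\psi_0\bold{v}$ has compact support in $\Om$, so $(\psi_0\bold{v})*\rho_\eta\in C_c^\infty(\Om)^d$ for $\eta$ small and converges to $\psi_0\bold{v}$ in $\bold{W}^q(\mathrm{div};\Om)$ as $\eta\to0$, since mollification commutes with $\mathrm{div}$ and converges in $L^q$. For $1\le j\le N$, I would work in the chart of $O_j$, extend $\psi_j\bold{v}$ by zero to $\mathbb{R}^d$ — call the extension $\widetilde{\psi_j\bold{v}}$ — and set $\bold{v}^\varepsilon(x):=\widetilde{\psi_j\bold{v}}(x-\varepsilon\bold{e}_d)$, where $\bold{e}_d$ is the chart axis pointing out of $\Om\cap O_j$, so that $\mathrm{supp}(\bold{v}^\varepsilon)$ is shifted by $\varepsilon\bold{e}_d$ across the graph $\{x_d=\phi_j(x')\}$ into the exterior of $\overline\Om$. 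The point is that on $\Om$ the field $\bold{v}^\varepsilon$ then coincides with a genuine translate of $\psi_j\bold{v}$, with no jump across any hypersurface meeting $\overline\Om$; consequently $\mathrm{div}\,\bold{v}^\varepsilon|_\Om=\widetilde{\mathrm{div}(\psi_j\bold{v})}(\cdot-\varepsilon\bold{e}_d)|_\Om\in L^q(\Om)$, and by continuity of translations in $L^q(\mathbb{R}^d)$ both $\bold{v}^\varepsilon\to\psi_j\bold{v}$ and $\mathrm{div}\,\bold{v}^\varepsilon\to\mathrm{div}(\psi_j\bold{v})$ in $L^q(\Om)$ as $\varepsilon\to0$, i.e.\ $\bold{v}^\varepsilon\to\psi_j\bold{v}$ in $\bold{W}^q(\mathrm{div};\Om)$. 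For each fixed $\varepsilon$, choosing $\eta$ small relative to $\varepsilon$ and $\mathrm{Lip}(\phi_j)$ — so that the mollification stencil over points of $\overline\Om$ never reaches the jump surface $\{x_d=\phi_j(x')\}+\varepsilon\bold{e}_d$, which lies at positive distance from $\overline\Om$ — the convolution $\bold{v}^\varepsilon*\rho_\eta$ is $C^\infty$ on $\overline\Om$, still supported in $O_j$, and converges to $\bold{v}^\varepsilon$ in $\bold{W}^q(\mathrm{div};\Om)$ as $\eta\to0$. A diagonal argument in $(\varepsilon,\eta)$ then yields $C^\infty(\overline\Om)^d$ approximants of $\psi_j\bold{v}$; summing the approximants of $\psi_0\bold{v},\dots,\psi_N\bold{v}$ produces the desired $C^\infty(\overline\Om)^d$ approximation of $\bold{v}$.

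The main obstacle is the boundary step: one has to choose the translation direction from each chart correctly and verify quantitatively that, after translating the zero-extension by $\varepsilon\bold{e}_d$, its jump hypersurface sits at distance $\gtrsim\varepsilon/\sqrt{1+\mathrm{Lip}(\phi_j)^2}$ from $\overline\Om$, so that a mollification radius $\eta$ can be chosen (depending on $\varepsilon$ and the Lipschitz constants) for which $\bold{v}^\varepsilon*\rho_\eta$ is smooth up to $\overline\Om$, remains supported in $O_j$, and has divergence converging in $L^q(\Om)$. Once this is in place, the rest — commutation of mollification with $\mathrm{div}$, convergence of mollifications in $L^q$, and the diagonal extraction — is routine, and the sole use of $q<\infty$ is the continuity of translations in $L^q$.
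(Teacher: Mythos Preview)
Your argument is correct, but it is \emph{not} the route the paper takes, and in fact it is not the route taken in \cite{MR851383} either: Girault--Raviart's Theorem~2.4 is proved by duality, and the paper simply transports that argument to the $L^q$ setting. Concretely, the paper represents an arbitrary $\ell\in\bold{W}^q(\mathrm{div};\Om)'$ that annihilates $C^\infty(\overline\Om)^d$ as a pair $(\boldsymbol{\ell},\ell_{d+1})\in L^p(\Om)^d\times L^p(\Om)$, tests against $C_c^\infty(\mathbb{R}^d)^d$ to deduce that the zero extension $\tilde\ell_{d+1}$ satisfies $\bold{\nabla}\tilde\ell_{d+1}=\tilde{\boldsymbol{\ell}}\in L^p(\mathbb{R}^d)^d$, hence $\ell_{d+1}\in W_0^{1,p}(\Om)$, and then uses density of $C_c^\infty(\Om)$ in $W_0^{1,p}(\Om)$ to conclude $\ell\equiv 0$. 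What you wrote is instead the Friedrichs/Meyers--Serrin style constructive proof (localize, translate across the Lipschitz graph, mollify), which is the standard density argument for $W^{m,p}(\Om)$ rather than for $\bold{H}(\mathrm{div};\Om)$. Your approach is self-contained and yields explicit approximants, at the cost of the chart bookkeeping you describe; the paper's duality approach is shorter and avoids the translation--mollification machinery entirely, but trades it for the nontrivial input that zero-extendability to $W^{1,p}(\mathbb{R}^d)$ characterizes $W_0^{1,p}(\Om)$. One small wording issue: when you say the support is ``shifted \dots\ into the exterior of $\overline\Om$'' you presumably mean the jump hypersurface is pushed into the exterior while the support merely overlaps it; the subsequent sentences make clear you understand this, but the phrasing could be tightened.
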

\begin{proof}
    Let $\ell\in \bold{W}^q(\mathrm{div};\Om)'$, the dual space of $\bold{W}^q(\mathrm{div};\Om)$. By the Riesz representation theorem, there exists an $\boldsymbol{\ell}\in L^p(\Om)^{d}$ and an $\ell_{d+1}\in L^{p}(\Om)$ ($p=q/(q-1)$) such that
    \begin{equation}\label{pf1_thmA1}
        \ell(\bold{v})=\sum_{i=1}^d\int_{\Om}\ell_iv_i\dx+\int_{\Om}\ell_{d+1}\mathrm{div}\bold{v}\dx,\quad \forall \bold{v} \in \bold{W}^q(\mathrm{div};\Om).
    \end{equation}
    Now we assume that $\ell(\bold{v})=0$ for any $\bold{v}\in C^\infty(\overline{\Om})^d$ and let $\tilde{\ell}_i$ denote the extension of $\ell_i$ by zero outside $\Om$. Then \eqref{pf1_thmA1} can be rewritten as
    \begin{align*}
        \int_{\mathbb{R}^d}\left(\sum_{i=1}^d\tilde{\ell}_{i}v_i+\tilde{\ell}_{d+1}\mathrm{div}\bold{v}\right)\dx=0,\quad\forall \bold{v}\in C^\infty_0(\mathbb{R}^d)^d,
    \end{align*}
    which implies that in the sense of distributions in $\mathbb{R}^d$, $\tilde{\boldsymbol{\ell}}=\bold{\nabla}\tilde{\ell}_{d+1}$,
    with $\tilde{\boldsymbol{\ell}}=(\tilde{\ell}_1,\ldots,\tilde{\ell}_d)^T$. Hence $\tilde{\ell}_{d+1}\in W^{1,p}(\mathbb{R}^d)$ since $\tilde{\boldsymbol{\ell}}\in L^p(\mathbb{R}^d)^d$. As $\tilde{\ell}_{d+1}$ is also the zero extension of $\ell_{d+1}$, it follows from \cite[Theorem 1.2, Chapter I]{MR851383} (also cf. \cite{MR2424078}) that $\ell_{d+1}\in W^{1,p}_{0}(\Om)$. The density of  $C^{\infty}_0(\Om)$ in $W^{1,p}_0(\Om)$ admits that there exists a sequence $\{\psi_m\}_{m\geq 1}\subset C^\infty_0(\Om)$ converging to $\ell_{d+1}$ in $W^{1,p}$-norm, which, along with the distributional definition of $\mathrm{div}$, yields
    \begin{align*}
        0=\lim_{m\to\infty}\int_{\Om}\left(\bold{\nabla}\psi_m\cdot\bold{v}+\psi_m\mathrm{div}\bold{v}\right)\dx
        =\int_{\Om}\left(\bold{\nabla}\ell_{d+1}\cdot\bold{v}+\ell_{d+1}\mathrm{div}\bold{v}\right)\dx
        =\ell(\bold{v}),\quad\forall \bold{v}\in \bold{W}^q(\mathrm{div};\Om).
    \end{align*}
    Therefore $\ell$ also vanishes on $\bold{W}^q(\mathrm{div};\Om)$. The desired density follows.
\end{proof}

\begin{theorem}\label{thm:trace}
The normal-component trace operator $\gamma_n:\bold{v}\mapsto\bold{v}\cdot\bold{n}$ is continuous from $\bold{W}^q(\mathrm{div};\Om)$ to $W^{-1/q,q}(\p\Om)$.
\end{theorem}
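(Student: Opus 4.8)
The plan is to transcribe the classical argument for $\bold{H}(\mathrm{div};\Om)$ in \cite[Chapter I, Theorem 2.5]{MR851383} to the exponent $q\neq 2$: define $\gamma_n$ first on the dense subspace $C^\infty(\overline{\Om})^d$, obtain a uniform bound there, and extend by continuity. For $\bold{v}\in C^\infty(\overline{\Om})^d$ the normal component $\bold{v}\cdot\bold{n}$ is a smooth function on $\partial\Om$, hence lies in $L^q(\partial\Om)\hookrightarrow W^{-1/q,q}(\partial\Om)$, and it acts on $W^{1/q,p}(\partial\Om)$ by $\langle\gamma_n\bold{v},\mu\rangle=\int_{\partial\Om}(\bold{v}\cdot\bold{n})\mu\,\ds$. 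The key step will be the a priori estimate
\[
\|\gamma_n\bold{v}\|_{W^{-1/q,q}(\partial\Om)}\leq C\|\bold{v}\|_{\bold{W}^q(\mathrm{div})},\qquad \forall\,\bold{v}\in C^\infty(\overline{\Om})^d;
\]
granting it, the density of $C^\infty(\overline{\Om})^d$ in $\bold{W}^q(\mathrm{div};\Om)$ (Theorem \ref{thm:density}) together with the completeness of $W^{-1/q,q}(\partial\Om)$ produces a unique bounded linear extension $\gamma_n\colon\bold{W}^q(\mathrm{div};\Om)\to W^{-1/q,q}(\partial\Om)$ with the same norm, which is precisely the assertion.

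To prove the a priori bound I would argue by duality against $W^{1/q,p}(\partial\Om)$. Given $\mu\in W^{1/q,p}(\partial\Om)$, I use that on a bounded Lipschitz domain the scalar trace operator $\gamma_0\colon W^{1,p}(\Om)\to W^{1/q,p}(\partial\Om)$ is a bounded surjection (a classical $L^p$ trace theorem), so by the open mapping theorem there is a lifting $w\in W^{1,p}(\Om)$ with $\gamma_0 w=\mu$ and $\|w\|_{W^{1,p}(\Om)}\leq C\|\mu\|_{W^{1/q,p}(\partial\Om)}$, the constant $C$ independent of $\mu$. Applying the divergence theorem to the field $w\bold{v}\in W^{1,p}(\Om)^d$ (for which it is valid on a Lipschitz domain; alternatively one first takes $w\in C^\infty(\overline{\Om})$ and passes to the limit using density of $C^\infty(\overline{\Om})$ in $W^{1,p}(\Om)$ and continuity of both sides) yields the Green identity
\[
\int_{\partial\Om}(\bold{v}\cdot\bold{n})\mu\,\ds=\int_\Om\bold{v}\cdot\bold{\nabla}w\,\dx+\int_\Om(\mathrm{div}\,\bold{v})\,w\,\dx .
\]
Hölder's inequality — the $L^q$–$L^p$ pairing on $\Om$ followed by the discrete Hölder inequality with exponents $q$ and $p$ — bounds the right-hand side by $\|\bold{v}\|_{\bold{W}^q(\mathrm{div})}\|w\|_{W^{1,p}(\Om)}\leq C\|\bold{v}\|_{\bold{W}^q(\mathrm{div})}\|\mu\|_{W^{1/q,p}(\partial\Om)}$. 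Taking the supremum over $\mu$ with $\|\mu\|_{W^{1/q,p}(\partial\Om)}\leq 1$ gives the claimed estimate.

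For a general $\bold{v}\in\bold{W}^q(\mathrm{div};\Om)$ I would then pick $\{\bold{v}_m\}\subset C^\infty(\overline{\Om})^d$ with $\bold{v}_m\to\bold{v}$ in $\bold{W}^q(\mathrm{div};\Om)$; the estimate makes $\{\gamma_n\bold{v}_m\}$ Cauchy in $W^{-1/q,q}(\partial\Om)$, its limit is independent of the chosen sequence, and one defines $\gamma_n\bold{v}$ to be that limit. Linearity is immediate, and passing to the limit in the bound gives continuity. The one non-routine ingredient — and the point I expect to be the real obstacle — is the bounded lifting of the scalar trace (equivalently the surjectivity of $\gamma_0$ on $W^{1,p}(\Om)$ combined with the open mapping theorem); this is, however, a classical fact of $L^p$ Sobolev trace theory on Lipschitz domains that can simply be cited, so apart from it the proof is a direct transcription of the Hilbert-space case.
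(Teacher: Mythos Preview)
Your proposal is correct and follows essentially the same route as the paper: Green's identity on $C^\infty(\overline{\Om})^d$, the bounded lifting of the scalar trace via surjectivity of $\gamma_0$ plus the open mapping theorem, H\"older's inequality, and extension by the density result of Theorem~\ref{thm:density}. The paper additionally invokes uniform convexity of $W^{1,p}(\Om)$ to show the infimum in the quotient norm is actually attained, but this is not needed for the bound and your formulation (existence of some lifting with controlled norm) is already sufficient.
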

\begin{proof}
Note that the following integration by parts formula holds
\begin{align*}
    \int_{\Om}\bold{v}\cdot\bold{\nabla}\psi \dx+\int_{\Om}\mathrm{div}\bold{v}\psi \dx
    =\int_{\partial\Om}\bold{v}\cdot\bold{n}\psi \ds,\quad\forall \bold{v}\in C^\infty(\overline{\Om})^d,\psi\in C^\infty(\overline{\Om}).
\end{align*}
Since $C^\infty(\overline{\Om})$ is dense in $W^{1,p}(\Om)$ $(p=q/(q-1))$, the identity defines a linear functional on $W^{1,p}(\Om)$ satisfying
\[
    \left|\int_{\partial\Om}\bold{v}\cdot\bold{n}\psi \ds\right|\leq\|\bold{v}\|_{\bold{W}^q(\mathrm{div})}\|\psi\|_{W^{1,p}(\Om)},\quad\forall \bold{v}\in C^\infty(\overline{\Om})^d,\psi\in W^{1,p}(\Om).
\]
Note that the trace mapping $\gamma: W^{1,p}(\Omega)\to W^{1/q,p}(\p\Om)$ is bounded and surjective. Then the open mapping theorem implies
\begin{equation}\label{eqn:inf-trace}
\inf_{v\in\mathrm{ker}\gamma}\|\psi +v\|_{W^{1,p}(\Omega)} \leq C \|\gamma\psi\|_{W^{1-1/p,p}(\p\Omega)}, \quad \forall\psi \in W^{1,p}(\Omega).
\end{equation}
Since $\mathrm{ker}\gamma=W_0^{1,p}(\Omega)$ is a closed subspace of $W^{1,p}(\Omega)$, by the uniform convexity of $W^{1,p}(\Omega)$ ($1<p<\infty$), we deduce that the infimum in \eqref{eqn:inf-trace} is attained at a unique $v\in W_0^{1,p}(\Omega)$. Thus for any $g\in W^{1/q,p}(\p\Om)$, there exists a $\psi\in W^{1,p}(\Om)$ such that $\psi=g$ on $\p\Om$ and
\[
       \left|\langle\bold{v}\cdot\bold{n},g\rangle_{W^{-1/q,q}(\p\Om),W^{1/q,p}(\p\Om)}\right|\leq  \|\bold{v}\|_{\bold{W}^q(\mathrm{div})}\|\psi\|_{W^{1,p}(\Om)}
       \leq C\|\bold{v}\|_{\bold{W}^q(\mathrm{div})}\|g\|_{W^{1-1/p,p}(\p\Om)},
\]
for any $\bold{v}\in C^\infty(\overline{\Om})^d$. Thus
$\|\bold{v}\cdot\bold{n}\|_{W^{-1/q,q}(\p\Om)}\leq
    C\|\bold{v}\|_{\bold{W}^q(\mathrm{div};\Om)}$, and the linear mapping $\gamma_n:\bold{v}\mapsto\bold{v}\cdot\bold{n}$ is continuous from $C^\infty(\overline{\Om})^d$ to $W^{-1/q,q}(\p\Om)$. Since $C^\infty(\overline{\Om})^d$ is dense in $\bold{W}^{q}(\mathrm{div};\Om)$ (cf. Theorem \ref{thm:density}), we may extend $\gamma_n$ continuously, still denoted by $\gamma_n$, from $\bold{W}^{q}(\mathrm{div};\Om)$ to $W^{-1/q,q}(\p\Om)$.
\end{proof}

\begin{remark}
Theorems \ref{thm:density} and \ref{thm:trace} imply the following Green's formula: for any $\bold{v}\in \bold{W}^q(\mathrm{div};\Om)$, $\psi\in W^{1,p}(\Om)$ with $1/p+1/q=1$,
    \begin{equation}\label{Greenform}
        \int_{\Om}\bold{v}\cdot\bold{\nabla}\psi \dx+\int_{\Om}\mathrm{div}\bold{v}\psi \dx=\langle\bold{v}\cdot\bold{n},\psi\rangle_{W^{-1/q,q}(\p\Om),W^{1/q,p}(\p\Om)}.
    \end{equation}
\end{remark}

\begin{theorem}\label{thm:surjective}
The normal-component trace operator $\gamma_n:\bold{v}\mapsto\bold{v}\cdot\bold{n}$ is
surjective from $\bold{W}^q(\mathrm{div};\Om)$ to $W^{-1/q,q}(\p\Om)$.
\end{theorem}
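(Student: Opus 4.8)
The plan is to construct an explicit preimage of a prescribed boundary datum by solving a nonlinear auxiliary problem of $p$-Laplacian type and taking its flux. Fix $g\in W^{-1/q,q}(\partial\Om)$. Since the trace operator $\gamma:W^{1,p}(\Om)\to W^{1/q,p}(\partial\Om)$ is bounded (recall $1-1/p=1/q$), the formula $\ell(\psi):=\langle g,\gamma\psi\rangle_{W^{-1/q,q}(\partial\Om),W^{1/q,p}(\partial\Om)}$ defines a bounded linear functional $\ell$ on $W^{1,p}(\Om)$ with $\|\ell\|\leq C\|g\|_{W^{-1/q,q}(\partial\Om)}$.

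Next I would solve the variational problem: find $\phi\in W^{1,p}(\Om)$ such that
\[
\int_{\Om}|\bold{\nabla}\phi|^{p-2}\bold{\nabla}\phi\cdot\bold{\nabla}\psi\dx+\int_{\Om}|\phi|^{p-2}\phi\,\psi\dx=\ell(\psi),\qquad\forall\,\psi\in W^{1,p}(\Om).
\]
This is the Euler--Lagrange equation of the functional $\psi\mapsto\frac1p\|\bold{\nabla}\psi\|_{L^p(\Om)}^p+\frac1p\|\psi\|_{L^p(\Om)}^p-\ell(\psi)$, which is strictly convex, coercive and weakly lower semicontinuous on the reflexive space $W^{1,p}(\Om)$; hence it attains its minimum at a unique point $\phi$ (equivalently, the associated operator $\phi\mapsto-\mathrm{div}(|\bold{\nabla}\phi|^{p-2}\bold{\nabla}\phi)+|\phi|^{p-2}\phi$ from $W^{1,p}(\Om)$ to its dual is bounded, strictly monotone, coercive and hemicontinuous, hence bijective by the Browder--Minty theorem), and $\|\phi\|_{W^{1,p}(\Om)}\leq C\|g\|_{W^{-1/q,q}(\partial\Om)}$.

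Then set $\bold{v}:=|\bold{\nabla}\phi|^{p-2}\bold{\nabla}\phi$. Since $\bold{\nabla}\phi\in L^p(\Om)^d$, we have $\bold{v}\in L^q(\Om)^d$; restricting the variational identity to $\psi\in C_0^\infty(\Om)$ gives $\mathrm{div}\,\bold{v}=|\phi|^{p-2}\phi$ in the distributional sense, and $\phi\in L^p(\Om)$ yields $|\phi|^{p-2}\phi\in L^q(\Om)$, so $\bold{v}\in\bold{W}^q(\mathrm{div};\Om)$. Finally, combining the Green formula \eqref{Greenform} applied to $\bold{v}$ and an arbitrary $\psi\in W^{1,p}(\Om)$ with the variational identity yields $\langle\bold{v}\cdot\bold{n}-g,\gamma\psi\rangle_{W^{-1/q,q}(\partial\Om),W^{1/q,p}(\partial\Om)}=0$ for all $\psi\in W^{1,p}(\Om)$; since $\gamma$ is surjective onto $W^{1/q,p}(\partial\Om)$, this forces $\gamma_n\bold{v}=\bold{v}\cdot\bold{n}=g$, proving surjectivity, and the map $g\mapsto\bold{v}$ is moreover a bounded right inverse of $\gamma_n$ by the estimate above.

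I expect the main obstacle to be the solvability of the nonlinear auxiliary problem in $W^{1,p}(\Om)$: the linear Laplacian cannot be used here because $L^p$-elliptic regularity on a merely Lipschitz domain holds only for a restricted range of exponents, whereas the $p$-homogeneous operator above is an isomorphism for every $p\in(1,\infty)$ thanks to strict monotonicity and coercivity; verifying these structural properties, though standard, is the technical heart of the argument.
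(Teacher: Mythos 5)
Your proposal is correct and follows essentially the same route as the paper: it solves the identical auxiliary Neumann $p$-Laplacian problem by minimizing the same coercive, strictly convex, weakly lower semicontinuous energy on $W^{1,p}(\Om)$ (or, equivalently, via Browder--Minty), sets $\bold{v}=|\bold{\nabla}\phi|^{p-2}\bold{\nabla}\phi$, and recovers $g=\bold{v}\cdot\bold{n}$ via the Green formula \eqref{Greenform}. The bounded right-inverse estimate you close with appears in the paper as the remark immediately following this theorem.
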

\begin{proof}
Consider the following $p$-Laplacian problem: given $g\in W^{-1/q,q}(\p\Om)$, find $\phi\in W^{1,p}(\Om)$ such that
\begin{equation}\label{p-Lap_aux}
    \left\{
    \begin{aligned}
        -\mathrm{div}(|\boldsymbol{\nabla}\phi|^{p-2}\boldsymbol{\nabla}\phi)+|\phi|^{p-2}\phi&=0&&\mbox{in}~\Om,\\
        |\bold{\nabla}\phi|^{p-2}\partial_n\phi&=g&&\mbox{on}~\p\Om.
    \end{aligned}
    \right.
\end{equation}
Let $\mathcal{I}(\psi)=\int_{\Om}|\bold{\nabla}\psi|^p\dx/p+\int_{\Om}|\psi|^p\dx/p-\langle g,\psi\rangle_{W^{-1/q,q}(\p\Om),W^{1/q,p}(\p\Om)}$. Then problem \eqref{p-Lap_aux} is equivalent to a minimization problem: find $\phi\in W^{1,p}(\Om)$ such that
\begin{equation}\label{p-Lap_aux_min}
    \mathcal{I}(\phi)=\inf_{\psi\in W^{1,p}(\Om)}\mathcal{I}(\psi).
\end{equation}
The trace theorem implies that on $W^{1,p}(\Om)$
    \begin{align*}
    \mathcal{I}(\psi)&\geq(\|\psi\|_{L^{p}(\Om)}^p+\|\bold{\nabla}\psi\|_{L^{p}(\Om)}^p)/p-\|g\|_{W^{-1/q,q}(\p\Om)}\|\psi\|_{W^{1/q,p}(\p\Om)}\\
    &\geq (\|\psi\|_{L^{p}(\Om)}^p+\|\bold{\nabla}\psi\|_{L^{p}(\Om)}^p)/p - C \|g\|_{W^{-1/q,q}(\p\Om)}\|\psi\|_{W^{1,p}(\Om)}.
    \end{align*}
Since $p>1$, $\mathcal{I}$ is coercive. Now assume that $\psi_n\to\psi$ weakly in $W^{1,p}(\Om)$. The weak convergence also holds in $W^{1/q,p}(\partial\Om)$ due to the continuity of the trace operator from $W^{1,p}(\Om)$ to $W^{1/q,p}(\p\Om)$. So we have
\[
    \mathcal{I}(\psi)\leq\liminf_{n\to\infty}\mathcal{I}(\psi_n),
\]
i.e., $\mathcal{I}$ is weakly lower semi-continuous in $W^{1,p}(\Om)$. Clearly, $\mathcal{I}$ is strictly convex. By the direct method in calculus of variations \cite{MR2480025}, problem \eqref{p-Lap_aux_min} has a unique minimizer. This gives a unique weak solution of problem \eqref{p-Lap_aux}.
By setting $\bold{v}=|\bold{\nabla}\phi|^{p-2}\bold{\nabla}\phi\in L^q(\Om)^d$, we get from \eqref{p-Lap_aux} that $\mathrm{div}\bold{v}=|\phi|^{p-2}\phi\in L^q(\Om)$ and $\bold{v}\cdot\bold{n}=g$. That is, for any $g\in W^{-1/q,q}(\p\Om)$ there exists a $\bold{v}\in\bold{W}^q(\mathrm{div};\Om)$ such that $\bold{v}\cdot\bold{n}=g$ on $\p\Om$.
\end{proof}

\begin{remark}
The variational formulation of problem \eqref{p-Lap_aux} further implies
\[
    \|\phi\|^p_{W^{1,p}(\Om)}=\langle g,\phi\rangle_{W^{-1/q,q}(\p\Om),W^{1/q,p}(\p\Om)}\leq C\|g\|_{W^{-1/q,q}(\p\Om)}\|\phi\|_{W^{1,p}(\Om)}.
\]
Since $\bold{v}=|\bold{\nabla}\phi|^{p-2}\bold{\nabla}\phi$, $\mathrm{div}\bold{v}=|\phi|^{p-2}\phi$ and $q=p/(p-1)$, we obtain
\begin{align*}
    \|\bold{v}\|_{\bold{W}^q(\mathrm{div})}\leq C\|g\|_{W^{-1/q,q}(\p\Om)}=C\|\bold{v}\cdot\bold{n}\|_{W^{-1/q,q}(\Om)},
\end{align*}
which, together with Theorems \ref{thm:trace} and \ref{thm:surjective}, further implies that $\gamma_n: \bold{W}^{q}(\mathrm{div};\Omega) \to W^{-1/q,q}(\partial\Omega)$ is an isomorphism with a bounded inverse.
\end{remark}

\bibliographystyle{siam}
\bibliography{reference}

\end{document}